\numberwithin{equation}{section}
\theoremstyle{plain}
\newtheorem{thm}{Theorem}[section]
\newtheorem{prop}[thm]{Proposition}
\newtheorem{defi}[thm]{Definition}
\newtheorem{lem}[thm]{Lemma}
\newtheorem{cor}[thm]{Corollary}
\newtheorem{eg}[thm]{Example}
\theoremstyle{remark}
\newtheorem{rema}[thm]{Remark}
\newcommand{\Z}{\mathbb{Z}}
\newcommand{\C}{\mathbb{C}}
\newcommand{\Id}{\textup{Id}}
\newcommand{\End}{\textup{End}}
\newcommand{\Tr}{\mathop{\textup{Tr}}}
\DeclareRobustCommand{\SkipTocEntry}[4]{}
\title[Koornwinder polynomials and spin chains]{Koornwinder polynomials and the XXZ spin chain}
\author{Jasper Stokman \& Bart Vlaar}
\address{J.S.: KdV Institute for Mathematics, University of Amsterdam, Science Park 904, 1098 XH Amsterdam, The Netherlands \& IMAPP,  Radboud University, Heyendaalseweg 135, 6525 AJ Nijmegen, The Netherlands.}
\email{j.v.stokman@uva.nl}
\address{B.V.: KdV Institute for Mathematics, University of Amsterdam, Science Park 904, 1098 XH Amsterdam, The Netherlands.}
\email{b.h.m.vlaar@uva.nl}
\subjclass[2000]{}
\begin{document}
\keywords{}
\begin{abstract}
Nonsymmetric Koornwinder polynomials are multivariable extensions of nonsymmetric Askey-Wilson polynomials. 
They naturally arise in the representation theory of (double) affine Hecke algebras. 
In this paper we discuss how nonsymmetric Koornwinder polynomials naturally arise in the theory of the Heisenberg XXZ spin-$\frac{1}{2}$ chain with general reflecting boundary conditions. A central role in this story is played by an explicit two-parameter family of spin representations of the two-boundary Temperley-Lieb algebra. 
These spin representations have three different appearances. Their original definition relates them directly to the XXZ spin chain, in the form of matchmaker representations they relate to Temperley-Lieb loop models in statistical physics, while their realization as principal series representations leads to the link with nonsymmetric Koornwinder polynomials. The nonsymmetric difference Cherednik-Matsuo correspondence allows to construct for special parameter values Laurent-polynomial solutions of the associated reflection quantum KZ equations in terms of nonsymmetric Koornwinder polynomials. We discuss these aspects in detail by revisiting and extending work of De Gier, Kasatani, Nichols, Cherednik, the first author and many others.
\end{abstract}
\maketitle
\setcounter{tocdepth}{1}
\begin{center}
Dedicated to the 80th birthday of Dick Askey.
\end{center}
\tableofcontents
\section{Introduction}
\subsection{Nonsymmetric Koornwinder polynomials}
The four-parameter family of Askey-Wilson polynomials, introduced in 1985 in the famous monograph \cite{AW}, is the most general class of classical $q$-orthogonal polynomials in one variable. Important generalizations have appeared since: the associated Askey-Wilson polynomials \cite{IR}, Rahman's \cite{Ra} biorthogonal rational ${}_{10}\phi_9$
functions, nonsymmetric Askey-Wilson polynomials \cite{NS,Sa}, Askey-Wilson functions \cite{KS}, elliptic analogs of Askey-Wilson polynomials \cite{Sp,Rai}, etc. 

There exist two completely different {\it multivariable} extensions of the Askey-Wilson polynomials. The first is due to Gasper and Rahman \cite{GRBook}. In this case
the multivariable Askey-Wilson polynomials admit explicit multivariate basic hypergeometric series expansions, allowing the corresponding theory to be developed by classical methods.
See \cite{GI} for the corresponding multivariable generalization of the Askey-Wilson function. 

The second type of multivariable generalization of the Askey-Wilson polynomials is part of the Macdonald-Cherednik \cite{Macd, CBook, StBook} theory on root system analogs of continuous $q$-ultraspherical, continuous $q$-Jacobi and Askey-Wilson polynomials.
In this case the multivariable extension of the Askey-Wilson polynomial, the nonsymmetric Askey-Wilson polynomial and the Askey-Wilson function are the Koornwinder polynomial \cite{Ko}, the nonsymmetric Koornwinder polynomial \cite{Sa} and the basic hypergeometric function \cite{St0,St2}, respectively. 
These multivariable extensions depend on five parameters and arise in harmonic analysis on (quantum) symmetric spaces \cite{NDS,Let}, representation theory of affine Hecke algebras \cite{CBook,Sa} and in quantum relativistic integrable one-dimensional
many-body systems of Calogero-Moser type \cite{Ru,vD}. Multivariable extensions of Rahman's \cite{Ra} biorthogonal rational functions and of the  elliptic analogs
of the Askey-Wilson functions are due to Rains \cite{Rai}. 

\subsection{Heisenberg spin chains}
\emph{Spin models} originate in the statistical mechanical study \cite{Ba,Be,He} of magnetism. 
The \emph{Heisenberg spin chain} is a one-dimensional quantum model of magnetism with a quantum spin particle residing at each site of a one-dimensional
lattice. The interaction of the quantum spin particles is given by nearest neighbour spin-spin interaction. 
Assuming that the number of sites is finite, say $n$, boundary conditions need to be imposed to determine how the quantum spin particles at the boundary sites $1$ and $n$ are 
treated in the model. The most investigated choice is to impose \emph{periodic (or closed) boundary conditions}, in which case we assume that the sites $1$ and $n$ are also 
adjacent (the one-dimensional lattice is put on a circle).
In this paper we focus on so-called \emph{reflecting (or open) boundary conditions}, in which case the quantum spin particles at the boundary sites $1$ and $n$ are interacting with reflecting boundaries on each side.

The quantum Hamiltonian $H_{\text{bdy}}$ for the Heisenberg spin-$\frac{1}{2}$ chain with reflecting boundaries has the following form \cite{IK,dVGR,Ne}.
The Pauli \cite{Pauli} spin matrices are
\[ \sigma^X = \begin{pmatrix} 0 & 1 \\ 1 & 0 \end{pmatrix}, \qquad
 \sigma^Y = \begin{pmatrix} 0 & -\sqrt{-1} \\ \sqrt{-1} & 0 \end{pmatrix}, \qquad
\sigma^Z = \begin{pmatrix} 1 & 0 \\ 0 & -1 \end{pmatrix}.  \]
Then $H_{\text{bdy}}$ is the linear operator on 
$(\C^2)^{\otimes n} = \underset{(1)}{\C^2} \otimes \cdots \otimes \underset{(n)}{\C^2}$ given by
\begin{align*} 
H_{\text{bdy}} &= \sum_{i=1}^{n-1} \left( J_X \sigma^X_i  \sigma^X_{i+1} + J_Y \sigma^Y_i  \sigma^Y_{i+1} + J_Z \sigma^Z_i  \sigma^Z_{i+1} \right) + \\
&+ \left(M^l_X \sigma^X_1 + M^l_Y \sigma^Y_1 + M^l_Z \sigma^Z_1\right) 
+ \left(M^r_X \sigma^X_n + M^r_Y \sigma^Y_n + M^r_Z \sigma^Z_n\right),
\end{align*}
where the subindices indicate the tensor leg of $(\C^2)^{\otimes n}$ on which the Pauli matrix is acting. 
Besides the three coupling constants $J_X, J_Y$ and $J_Z$ there are three parameters $M^l_X, M^l_Y$ and $M^l_Z$ at the left reflecting boundary and three parameters $M^r_X, M^r_Y$ and $M^r_Z$ at the right reflecting boundary of the one-dimensional lattice. 
In general, the three coupling constants are distinct, in which case $H_{\text{bdy}}$ corresponds to the XYZ spin-$\frac{1}{2}$ model with general reflecting boundary conditions; in its full generality it was first obtained in \cite{IK}. 
In this paper we consider the special case that $J_X=J_Y\not=J_Z$, in which case the spin chain is the XXZ spin-$\frac{1}{2}$ model with general reflecting boundary conditions. We denote the resulting quantum Hamiltonian by $H_{\text{bdy}}^{XXZ}$. 
Up to rescaling, $H_{\text{bdy}}^{XXZ}$ thus has seven free parameters; also see \cite{dVGR,Ne}.

The XXZ spin-$\frac{1}{2}$ chain with generic reflecting boundary conditions is quantum integrable, in the sense that the quantum Hamiltonian is part of a large commuting set of
linear operators on $(\C^2)^{\otimes n}$ encoded by a transfer operator (see \cite{Ne}). 
For $H_{\text{bdy}}^{XXZ}$ the transfer operator is produced from the standard XXZ spin-$\frac{1}{2}$ solution of the quantum Yang-Baxter equation and three-parameter solutions of the associated left and right reflection equations, see \cite{Ne} and Subsection \ref{Hsection}.
It opens the way to study the spectrum and eigenfunctions of $H_{\text{bdy}}^{XXZ}$ using Sklyanin's \cite{Sk1988} generalization of the algebraic Bethe ansatz to quantum integrable models with reflecting boundary conditions. 

\subsection{Representation theory}
A representation theoretic context for the five parameter family of Koornwinder polynomials is provided by Letzter's \cite{Let} notion of quantum symmetric pairs. The quantum symmetric pairs pertinent to Koornwinder polynomials are given by a family of coideal subalgebras of the quantized universal enveloping algebra of $\mathfrak{gl}_N$
naturally associated to quantum complex Grassmannians (see \cite{NDS,OS}). For the XXZ spin-$\frac{1}{2}$ spin chain with general reflecting boundary conditions,
a representation theoretic context is provided by coideal subalgebras of the quantum affine algebra of $\widehat{\mathfrak{sl}}_2$ known as $q$-Onsager algebras (see \cite{BK}). $q$-Onsager algebras are examples of quantum {\it affine} symmetric pairs \cite{Kolb}. 

The five-parameter double affine Hecke algebras of type $\widetilde{C}_n$ \cite{Sa} provides another representation theoretic context for Koornwinder polynomials. On the other hand, various special cases of the XXZ spin-$\frac{1}{2}$ chain with general reflecting boundary conditions have been related to the three-parameter affine Hecke algebra of type $\widetilde{C}_n$, which is a subalgebra of the double affine Hecke algebra (see \cite{dGN} and references therein).

It is one of the purposes of this paper to show that the double affine Hecke algebra governs the whole seven-parameter family of XXZ spin-$\frac{1}{2}$ chains with reflecting
boundary conditions. The double affine Hecke algebra gives rise to a Baxterization procedure for affine Hecke algebra representations, producing for a given representation a two-parameter family of solutions of quantum Yang-Baxter and reflection equations (see Subsection \ref{Baxt}). In the case of spin representations, which form a two-parameter family of representations of the affine Hecke algebra of type $\widetilde{C}_n$ on the state space $(\C^2)^{\otimes n}$ \cite{dGP1,dGN}, we end up with solutions depending on seven parameters: three affine Hecke algebra parameters denoted by $\underline{\kappa}=(\kappa_0,\kappa,\kappa_n)$, two Baxterization parameters denoted by $\upsilon_0,\upsilon_n$, and two spin representation parameters denoted by $\psi_0,\psi_n$. 
The resulting transfer operator reproduces $H_{\text{bdy}}^{XXZ}$ as the associated quantum Hamiltonian under an appropriate parameter correspondence. 
The parameters $\underline{\kappa},\upsilon_0,\upsilon_n$ are the five parameters of the Koornwinder polynomials.

The spin representations factor through the two-boundary Temperley-Lieb algebra \cite{dGN}. 
The two-boundary Temperley-Lieb algebra has a natural two-parameter family of representations on the formal vector spaces spanned by two-boundary non-crossing perfect matchings (see Definition \ref{perfectDef}), in which the generators of the algebra act by "matchmakers", see, e.g.,  \cite{dG,dGN} and Subsection \ref{Matchsection}. 
It is known \cite{dGN} that generically the resulting matchmaker representations are isomorphic to the spin representations under a suitable parameter correspondence. 
We provide a new proof for this in Subsection \ref{Matchsection} by constructing an explicit intertwiner. 
The Baxterization of the matchmaker representation leads to a transfer operator acting on the formal vector space of two-boundary non-crossing perfect matchings. 
The associated quantum Hamiltonian corresponds to the Temperley-Lieb loop model (also known as dense loop model) with general open boundary conditions (see Subsection \ref{Hsection}); special cases are discussed in, e.g., \cite{MNGB,dG,dGN}. 
It leads to the conclusion that generically the XXZ spin-$\frac{1}{2}$ chain and the Temperley-Lieb loop model for general reflecting boundary conditions are equivalent under a suitable parameter correspondence.

\subsection{Weyl group invariant solutions of the reflection quantum KZ equations} 
 
The quantum Knizhnik-Zamolodchikov (KZ) equations are a consistent system of first order linear $q$-difference equations, which were first studied by Smirnov in relation to integrable quantum field theories \cite{Sm}. 
They were related to representation theory of quantum affine algebras by Frenkel and Reshetikhin \cite{FR}. 
Their solutions entail correlation functions for XXZ spin chains with quasi-periodic boundary conditions, see, e.g., \cite{JM}.
More recently, links to algebraic geometry have been exposed \cite{dFZJ2}. 
Generalizations of these equations for arbitrary root systems were constructed by Cherednik \cite{CQKZ,CQKZ3}. 
For type A, the aforementioned Smirnov-Frenkel-Reshetikhin quantum KZ equations are recovered; \emph{reflection} quantum KZ equations are the equations in Cherednik's framework associated to the other classical types \cite[\S 5]{CQKZ}. 
In the context of the spin-$\frac{1}{2}$ Heisenberg chain, the reflection quantum KZ equations were first derived in \cite{JKKKM}. 
 
A choice of solutions of the quantum Yang-Baxter equation and the associated reflection equations gives rise to reflection quantum KZ equations \cite{CQKZ}. 
In case the solutions are obtained from the Baxterization of the spin representations we obtain reflection quantum KZ equations depending on seven parameters. 
These are the reflection quantum KZ equations under investigation in the last section of the paper (Section \ref{Solsection}).
Their solutions are expected to give rise to correlation functions for the spin chain governed by $H_{\text{bdy}}^{XXZ}$. 
The close link to the spin chain is apparent from the fact that the transport operators of the reflection quantum KZ equations at $q=1$ are higher quantum Hamiltonians for the spin chain (see Subsection \ref{spsection}).

For special choices of reflecting boundary conditions methods have been developed to construct explicit solutions of the reflection quantum KZ equations, e.g. in \cite{JKKKM,JKKMW,We,dFZJ,dFZJ2,ZJ2,Pa,RSV}. 
In Section \ref{Solsection} we construct solutions using nonsymmetric Koornwinder polynomials, following the ideas from \cite{Ka,St2}.

The solution space of the reflection quantum KZ equations associated to $H_{\text{bdy}}^{XXZ}$ has a natural action of the Weyl group $W_0$ of type $C_n$, which is the hyperoctahedral group $S_n\ltimes\{\pm 1\}^n$. 
In Section \ref{Solsection} we show that generically, the space of Weyl group invariant solutions is in bijective correspondence to a suitable space of eigenfunctions of the Cherednik-Noumi \cite{N} $Y$-operators. 
The $Y$-operators are commuting scalar-valued $q$-difference-reflection operators generalizing Dunkl operators. The link is provided by the nonsymmetric Cherednik-Matsuo correspondence \cite{St1}. 
This correspondence can be applied in the present context, because the spin representations are examples of principal series representations (see Subsection \ref{PrincipalSection}).
 
The nonsymmetric basic hypergeometric function of Koornwinder type \cite{St0,St2} provides distinguished examples of eigenfunctions of the $Y$-operators. We show that
it produces a nontrivial $W_0$-invariant solution of the reflection quantum KZ equations for generic values of the parameters. It is exactly known under which specialization
of the spectral parameters the nonsymmetric basic hypergeometric function reduces to a nonsymmetric Koornwinder polynomial. It follows that if the parameters satisfy
one of the following two conditions,
\begin{equation*}
\begin{split}
\psi_0\psi_nq^m&=\kappa_0\kappa_n\kappa^{n-1}\qquad\,\,\,\,\,\,\, \textup{for some } m\in\mathbb{Z}_{\geq 1},\\
\psi_0\psi_nq^m&=\kappa_0^{-1}\kappa_n^{-1}\kappa^{1-n}\qquad \textup{for some } m\in\mathbb{Z}_{\leq 0},
\end{split}
\end{equation*}
then nontrivial $W_0$-invariant Laurent polynomial solutions of the reflection quantum KZ equations
exist (see Theorem \ref{mainthm}).
Note that these conditions do not depend on the two Baxterization parameters $\upsilon_0,\upsilon_n$. 
$W_0$-invariant Laurent polynomial solutions are expected to play an important role in generalizations of Razumov-Stroganov conjectures \cite{dGR}, cf., e.g., \cite{dGP2, ZJ1, 
KaPrep}.  

\subsection{Outlook}
It is an open problem how the solutions of the reflection quantum KZ equations in terms of nonsymmetric basic hypergeometric functions and nonsymmetric Koornwinder polynomials are related to other constructions of explicit solutions \cite{JKKKM,JKKMW,We,dFZJ,dFZJ2,ZJ2,Pa,RSV}.
Wall-crossing formulas for the reflection quantum KZ equations associated to $H_{\text{bdy}}^{XXZ}$ are in reach by \cite{St1,St3}. 
They are expected to lead to elliptic solutions of quantum dynamical Yang-Baxter equations and associated dynamical reflection equations governing the integrability of elliptic solid-on-solid models with reflecting ends, cf. \cite{Fi} for such models with diagonal reflecting ends. 
Pursuing the techniques of the present paper at the elliptic level, either for the XYZ spin chain or elliptic solid-on-solid models, is expected to lead to connections to elliptic hypergeometric functions \cite{Ra,Sp,SpSurvey,SpInt}. 
Only some first steps have been made here, see, e.g., \cite{ZJell, Tak}.

\subsection{Acknowledgements} We are grateful to P. Baseilhac, J. de Gier, B. Nienhuis, N. Resheti\-khin and P. Zinn-Justin for stimulating discussions. 
The work of B.V. was supported by an NWO Free Competition grant ("Double affine Hecke algebras, Integrable Models and Enumerative Combinatorics").

\subsection*{Notational conventions}
Throughout the paper $n\in\mathbb{Z}_{\geq 2}$. 
Linear operators of $\mathbb{C}^2$ are represented as $2\times 2$-matrices with respect to a fixed ordered basis $(v_+,v_-)$ of $\mathbb{C}^2$, and linear operators of $\mathbb{C}^2\otimes\mathbb{C}^2$ are represented as $4\times 4$-matrices with respect to the ordered
basis $(v_+\otimes v_+,v_+\otimes v_-,v_-\otimes v_+,v_-\otimes v_-)$ of $\mathbb{C}^2\otimes\mathbb{C}^2$.

\section{Groups and algebras of type \except{toc}{$\widetilde{C}_n$}\for{toc}{$\tilde{C}_n$}}  \label{algebrasection}

\subsection{Affine braid group}
\begin{defi}
The affine braid group of type $\widetilde{C}_n$ is the group $\mathcal{B}$ with generators $\sigma_0,\sigma_1,\ldots,\sigma_n$ subject to the braid relations
\begin{align*}
\sigma_0\sigma_1\sigma_0\sigma_1&=\sigma_1\sigma_0\sigma_1\sigma_0,\\
\sigma_{n-1}\sigma_n\sigma_{n-1}\sigma_n&=\sigma_n\sigma_{n-1}\sigma_n\sigma_{n-1},\\
\sigma_i\sigma_{i+1}\sigma_i&=\sigma_{i+1}\sigma_i\sigma_{i+1}, && 1\leq i<n-1, \\
\sigma_i \sigma_j &= \sigma_j \sigma_i, && 1 \leq i,j \leq n, \, |i-j|>1. 
\end{align*}
\end{defi}
It can be topologically realized as follows (see the type $\widetilde{C}_n$ case in \cite[\S 4]{Al}).
Consider the line segments $P_l=\{0\}\times\{0\}\times [0,1]$ and $P_r=\{0\}\times\{n+1\}\times [0,1]$ in $\mathbb{R}^3$.
Let $\widetilde{\mathcal{B}}$ be the group of $n$-braids in the strip $\bigl(\mathbb{R}^2\times [0,1]\bigr)\setminus (P_l\cup P_r)$, with the $n$-braids attached to the floor at $(0,i,0)$ ($1\leq i\leq n$) and to the ceiling at $(0,i,1)$ ($1\leq i\leq n$). The group operation $\sigma\tau$ is putting the $n$-braid $\tau$ on top of $\sigma$ and shrinking the height by isotopies. 
The group isomorphism $\mathcal{B}\simeq\widetilde{\mathcal{B}}$ is induced by the identification
\begin{gather*}
\sigma_i = 
\begin{minipage}[c]{36mm}
\begin{tikzpicture}[scale=36/84]
\draw[gray,line width = 5pt] (0,0) -- (0,4) node[at start,below] {$P_l$};
\draw[gray,line width = 5pt] (7,0) -- (7,4) node[at start,below] {$P_r$};
\draw[very thick] (1,0) -- (1,4) node[at start, below] {1};
\draw(2,2) node{\ldots};
\draw[very thick] (3,0) .. controls (3,1.6) .. node[at start, below]{$i \vphantom{1}$} (3.5,2) .. controls (4,2.4) .. (4,4) ;
\draw[white,line width = 5pt] (4,0)  .. controls (4,1.6) .. (3.5,2) .. controls (3,2.4) .. (3,4);
\draw[very thick] (4,0)  .. controls (4,1.6) .. node[at start, below]{$i\!\!+\!\!1$} (3.5,2) .. controls (3,2.4) .. (3,4);
\draw(5,2) node{\ldots};
\draw[very thick] (6,0) -- (6,4) node[at start, below] {$n \vphantom{1}$};
\end{tikzpicture}
\end{minipage}
\qquad  1 \leq i < n, \\
\sigma_0 = 
\begin{minipage}[c]{32mm}
\begin{tikzpicture}[scale=32/72]
\draw[gray,line width = 5pt] (0,0) -- (0,4) node[at start,below] {$P_l$};
\draw[gray,line width = 5pt] (5,0) -- (5,4) node[at start,below] {$P_r$};
\draw[white,line width = 5pt] (1,0) .. controls (1,1.6) .. (0,1.6) .. controls (-1,1.6) .. (-1,2);
\draw[very thick] (1,0) .. controls (1,1.6) .. node[at start, below] {1} (0,1.6) .. controls (-1,1.6) .. (-1,2) .. controls (-1,2.4) .. (-.3,2.4);
\draw[very thick] (.3,2.4) .. controls (1,2.4) .. (1,4);
\draw[very thick] (2,0) -- (2,4) node[at start, below] {2};
\draw(3,2) node{\ldots};
\draw[very thick] (4,0) -- (4,4) node[at start, below] {$n \vphantom{1}$};
\end{tikzpicture}
\end{minipage}
\qquad
\sigma_n = 
\begin{minipage}[c]{32mm} 
\begin{tikzpicture}[scale=32/72]
\draw[gray,line width = 5pt] (0,0) -- (0,4) node[at start,below] {$P_l$};
\draw[gray,line width = 5pt] (5,0) -- (5,4) node[at start,below] {$P_r$};
\draw[very thick] (1,0) -- (1,4) node[at start, below] {1};
\draw(2,2) node{\ldots};
\draw[very thick] (3,0) -- (3,4) node[at start, below] {$\! n\!\!-\!\!1 \,$};
\draw[very thick] (4,0) .. controls (4,1.6) .. node[at start, below] {$n \vphantom{1}$} (4.7,1.6);
\draw[white,line width = 5pt] (6,2) .. controls (6,2.4) .. (5,2.4) .. controls (4,2.4) .. (4,4);
\draw[very thick] (5.3,1.6) .. controls (6,1.6) .. (6,2) .. controls (6,2.4) .. (5,2.4) .. controls (4,2.4) .. (4,4);
\end{tikzpicture}
\end{minipage}
\end{gather*}
where the right hand sides are the braid diagram projections on the $\mathbb{R}^2\simeq\{0\}\times\mathbb{R}^2$-plane. 
For $1 \leq i \leq n$, it is easy to check topologically that the elements
\begin{align*}
g_i &:=\sigma_{i-1}^{-1}\cdots\sigma_1^{-1}\sigma_0\sigma_1\cdots\sigma_{n-1}\sigma_n\sigma_{n-1}\cdots\sigma_i \\
&= \begin{minipage}[c]{60mm}
\begin{tikzpicture}[scale=.5]
\draw[gray,line width = 5pt] (0,0) -- (0,4) node[at start,below] {$P_l$};
\draw[gray,line width = 5pt] (8,0) -- (8,4) node[at start,below] {$P_r$};
\draw[very thick] (.3,2) -- (1.3,2);
\draw[very thick, dashed] (1.3,2) -- (2.7,2);
\draw[very thick] (2.7,2) -- (5.3,2);
\draw[very thick, dashed] (5.3,2) -- (6.7,2);
\draw[very thick] (6.7,2) -- (7.7,2);
\draw[white,line width = 5pt] (3,1) -- (0,1) .. controls (-1,1) .. (-1,1.5) ;
\draw[very thick] (4,0) .. controls (4,1) .. node[at start, below]{$i \vphantom{1}$} (3,1) -- (2.7,1);
\draw[very thick,dashed] (2.7,1) -- (1.3,1);
\draw[very thick] (1.3,1) -- (0,1) .. controls (-1,1) .. (-1,1.5) .. controls (-1,2) .. (-.3,2) ; 
\draw[white,line width = 5pt] (1,0) -- (1,4);
\draw[white,line width = 5pt] (3,0) -- (3,4);
\draw[white,line width = 5pt] (5,0) -- (5,4);
\draw[white,line width = 5pt] (7,0) -- (7,4);
\draw[very thick] (1,0) -- (1,4) node[at start, below] {1};
\draw[very thick] (3,0) -- (3,4) node[at start, below] {$i\!\!-\!\!1$};
\draw[very thick] (5,0) -- (5,4) node[at start, below] {$i\!\!+\!\!1$};
\draw[very thick] (7,0) -- (7,4) node[at start, below] {$n  \vphantom{1}$};
\draw[white,line width = 5pt] (9,2.5) .. controls (9,3) .. (8,3) -- (5,3) .. controls (4,3) .. (4,4);
\draw[very thick] (8.3,2) .. controls (9,2) .. (9,2.5) .. controls (9,3) .. (8,3) -- (6.7,3);
\draw[very thick,dashed] (6.7,3) -- (5.3,3);
\draw[very thick] (5.3,3) -- (5,3) .. controls (4,3) .. (4,4);
\end{tikzpicture}
\end{minipage}
\end{align*}
of the affine braid group $\mathcal{B}$ pairwise commute.

\subsection{The affine Weyl group}
The affine Weyl group $W$ of type $\widetilde{C}_n$ is the quotient of $\mathcal{B}$ by the relations $\sigma_i^2=1$ ($0\leq i\leq n$).
The generators $\sigma_0,\ldots,\sigma_n$ descend to group generators of $W$, which we denote by $s_0,\ldots,s_n$. The affine Weyl group
$W$ is a Coxeter group with Coxeter generators $s_0,\ldots,s_n$.
Write $\tau_i$ for the commuting elements in $W$ corresponding to $g_i\in\mathcal{B}$ under the canonical surjection $\mathcal{B}\twoheadrightarrow W$, so that
\begin{equation}\label{taui}
\tau_i=s_{i-1}\cdots s_1s_0s_1\cdots s_{n-1}s_ns_{n-1}\cdots s_{i}.
\end{equation}

The affine Weyl group $W$ acts faithfully on $\mathbb{R}^n$ by affine linear transformations via
\begin{equation*}
\begin{split}
s_0(x_1,\ldots,x_n)&=(1-x_1,x_2,\ldots,x_n),\\
s_i(x_1,\ldots,x_n)&=(x_1,\ldots,x_{i-1},x_{i+1},x_i,x_{i+2},\ldots,x_n),\\
s_n(x_1,\ldots,x_n)&=(x_1,\ldots,x_{n-1},-x_n)
\end{split}
\end{equation*}
for $1\leq i<n$. Note that the $s_j$ act on $\mathbb{R}^n$ as orthogonal reflections in affine hyperplanes. We sometimes call $s_0,\ldots,s_n$ the simple reflections of $W$.
The subgroup $W_0$ of $W$ generated by $s_1,\ldots,s_n$, acting by permutations and sign changes of the
coordinates, is isomorphic to $S_n\ltimes (\pm 1)^n$ with $S_n$ the symmetric group in $n$ letters (which, in turn, is isomorphic
to the subgroup generated by $s_1,\ldots,s_{n-1}$). Note furthermore that
\[
\tau_i(x_1,\ldots,x_n)=(x_1,\ldots,x_{i-1},x_i+1,x_{i+2},\ldots,x_n),\qquad 1\leq i\leq n,
\]
hence the abelian subgroup of $W$ generated by $\tau_1,\ldots,\tau_n$ is isomorphic to $\mathbb{Z}^n$. 
We write $\tau( \lambda):=\tau_1^{\lambda_1}\cdots\tau_n^{\lambda_n}\in W$ for $ \lambda=(\lambda_1,\ldots,\lambda_n)\in\mathbb{Z}^n$.
We have $W \cong W_0\ltimes \mathbb{Z}^n$, with $W_0$ acting on $\mathbb{Z}^n$ by permutations and sign changes of the coordinates.

\subsection{Affine Hecke algebra}
Fix $\underline{\kappa}=(\kappa_0,\kappa_1,\ldots,\kappa_n)\in\bigl(\mathbb{C}^*\bigr)^{n+1}$ with 
$\kappa_1=\kappa_2=\cdots=\kappa_{n-1}$. We write $\kappa$ for the value $\kappa_i$ ($1\leq i<n$). 

The affine Hecke algebra $H(\underline{\kappa})$ of type $\widetilde{C}_n$ is the quotient of the group algebra $\mathbb{C}[\mathcal{B}]$ of the affine braid group $\mathcal{B}$ by the two-sided ideal generated by the elements $(\sigma_j-\kappa_j)(\sigma_j+\kappa_j^{-1})$ for $0\leq j \leq n$. 
The generators $\sigma_0,\ldots,\sigma_n$ descend to algebraic generators of $H(\underline{\kappa})$, which we denote by $T_0,\ldots,T_n$.

The quadratic relation $(T_j-\kappa_j)(T_j+\kappa_j^{-1})=0$ in $H(\underline{\kappa})$ implies that
$T_j$ is invertible in $H(\underline{\kappa})$ with inverse $T_j-\kappa_j+\kappa_j^{-1}$.
 
For $\underline{\kappa}=(1,1,\ldots,1)$ the affine Hecke algebra $H(\underline{\kappa})$ is the group 
algebra $\mathbb{C}[W]$ of the affine Weyl group $W$ of type $C_n$.
 
Let $Y_i$ be the element in $H(\underline{\kappa})$ corresponding to $g_i\in\mathcal{B}$ under the canonical surjection
$\mathbb{C}[\mathcal{B}]\twoheadrightarrow H(\underline{\kappa})$. Then
\[
Y_i=T_{i-1}^{-1}\cdots T_1^{-1}T_0T_1\cdots T_{n-1}T_nT_{n-1}\cdots T_i,\qquad 1\leq i\leq n.
\]
The $Y_i$ pairwise commute in $H(\underline{\kappa})$. They are sometimes called Murphy elements (cf., e.g., \cite[Def. 2.8]{dGN}).
In the context of representation theory of affine Hecke algebras, they naturally arise in the Bernstein-Zelevinsky presentation
of the affine Hecke algebra, see \cite{Lu}.

We write
\[
Y^{ \lambda}:=Y_1^{\lambda_1}Y_2^{\lambda_2}\cdots Y_n^{\lambda_n},\qquad  \lambda=(\lambda_1,\ldots,\lambda_n)
\in\mathbb{Z}^n.
\]
The elements $T_1,\ldots,T_n$ and $Y^{ \lambda}$ ($ \lambda \in \mathbb{Z}^n$) generate $H(\underline{\kappa})$.

\subsection{The two-boundary Temperley-Lieb algebra} \label{2BTL}
In analogy to the setup for the affine Hecke algebra, fix $\underline{\delta} = (\delta_0,\delta_1,\cdots, \delta_n) \in\bigl(\mathbb{C}^*\bigr)^{n+1}$ with $\delta_1 = \delta_2 = \cdots = \delta_{n-1} =: \delta$. 
The \emph{two-boundary Temperley-Lieb algebra} TL$(\underline{\delta})$ \cite{dGN}, also known as the \emph{open Temperley-Lieb algebra} \cite{dGP1}, is the unital associative algebra over $\C$ with generators $e_0,\ldots,e_n$ satisfying
\begin{align}
&& e_i^2 &= \delta_i e_i \label{TLquadraticrelation} \\
&& e_i e_{i \pm 1} e_i &= e_i && 1 \leq i < n, \label{TLnoncommutingrelation} \\
&& e_i e_j &= e_j e_i,&& |i-j|>1. \label{TLcommutingrelation}
\end{align}
See \cite{dGN,dGP1} for a diagrammatic realization of $\textup{TL}(\underline{\delta})$. 

To generic affine Hecke algebra parameters $\underline{\kappa}$ we associate two-boundary Temperley-Lieb parameters $\underline{\delta}$ by
\begin{equation} \label{eq:deltas} 
\delta_j = -\frac{\kappa_j+\kappa_j^{-1}}{\kappa \kappa_j^{-1} +\kappa^{-1} \kappa_j}, 
\qquad\quad \delta =  -(\kappa+\kappa^{-1}) 
\end{equation}
for $j=0$ and $j=n$. In the remainder of the paper we always assume that the affine Hecke algebra parameters and the two-boundary Temperley-Lieb algebra 
parameters are matched in this way. Then there exists a unique surjective algebra homomorphism $\phi: H(\underline \kappa) \twoheadrightarrow TL(\underline \delta)$ such that 
\begin{equation} \label{phiimages} 
\phi(T_j) = 
\begin{cases}  \kappa_j + (\kappa \kappa_j^{-1} +\kappa^{-1} \kappa_j) e_j,\qquad & j=0,n, \\ 
\kappa+e_j,\qquad & 1 \leq j < n,
\end{cases} 
\end{equation}
see \cite[Prop. 2.13]{dGN}. In particular, $\textup{TL}(\underline{\delta})$ is isomorphic to a quotient of $H(\underline{\kappa})$.
The kernel of $\phi$ can be explicitly described, see \cite[Lem. 2.15]{dGN}.

\section{Representations}  \label{repsection}
\subsection{Spin representation}\label{Spinrep}
In the following lemma we give a two-parameter family of representations of the two-boundary Temperley-Lieb
algebra $\textup{TL}(\underline{\delta})$
on the state space $\bigl(\mathbb{C}^2\bigr)^{\otimes n}$ of the Heisenberg XXZ spin-$\frac{1}{2}$ chain. It includes
the one-parameter family of representations that has been intensively studied in the physics literature (see, e.g., \cite[\S 2.3]{dGN}
and references therein).

We use the standard tensor leg notations for linear operators on $\bigl(\mathbb{C}^2\bigr)^{\otimes n}$. Note also the convention on the matrix notation
for linear operators on $\mathbb{C}^2\otimes\mathbb{C}^2$ as given at the end of the introduction.
\begin{lem}
Let the free parameters $\underline{\delta}$ of the two-boundary Temperley-Lieb algebra be given in terms of the
generic affine Hecke algebra parameters $\underline{\kappa}$ by \eqref{eq:deltas}. Let $\psi_0,\psi_n\in\mathbb{C}^*$.
There exists a unique algebra homomorphism 
\[
\hat{\rho}=\hat{\rho}^{\underline{\kappa}}_{\psi_0,\psi_n}:
\textup{TL}(\underline{\delta})\rightarrow\textup{End}_{\mathbb{C}}\bigl((\mathbb{C}^2)^{\otimes n}\bigr)
\]
such that 
\begin{align*}
\hat \rho(e_0) &= \frac{1}{\kappa \kappa_0^{-1} + \kappa^{-1} \kappa_0} \begin{pmatrix} -\kappa_0^{-1} & \psi_0 \\ \psi_0^{-1} & -\kappa_0 \end{pmatrix}_1 \\
\hat \rho(e_i) &= \begin{pmatrix} 0 & 0 & 0 & 0 \\ 0 & -\kappa & 1 & 0 \\ 0 & 1 & -\kappa^{-1} & 0 \\ 0 & 0 & 0 & 0 \end{pmatrix}_{i , i\!+\!1} \\
\hat \rho(e_n) &= \frac{1}{\kappa \kappa_n^{-1} + \kappa^{-1} \kappa_n} 
\begin{pmatrix} -\kappa_n & \psi_n^{-1} \\ \psi_n & -\kappa_n^{-1} \end{pmatrix}_n. 
\end{align*}
\end{lem}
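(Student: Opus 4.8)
The plan is to verify directly that the three proposed matrices satisfy the defining relations \eqref{TLquadraticrelation}--\eqref{TLcommutingrelation} of $\textup{TL}(\underline{\delta})$; uniqueness is then automatic since $e_0,\dots,e_n$ generate the algebra. The commuting relations \eqref{TLcommutingrelation} are immediate: $\hat\rho(e_0)$ acts only on leg $1$, $\hat\rho(e_n)$ only on leg $n$, and $\hat\rho(e_i)$ on legs $i,i+1$, so whenever $|i-j|>1$ the operators act on disjoint sets of tensor legs and hence commute (here one uses $n\geq 2$, and more precisely that $e_0$ and $e_1$ are \emph{not} required to commute, etc.). So the real content is in \eqref{TLquadraticrelation} and \eqref{TLnoncommutingrelation}.

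For the quadratic relations \eqref{TLquadraticrelation} I would compute the square of each $2\times 2$ block (for $i=0,n$) and the $4\times 4$ block (for $1\leq i<n$). For $\hat\rho(e_i)$ with $1\leq i<n$, the nontrivial $2\times 2$ sub-block $\left(\begin{smallmatrix}-\kappa & 1\\ 1 & -\kappa^{-1}\end{smallmatrix}\right)$ squares to $-(\kappa+\kappa^{-1})$ times itself (its trace is $-(\kappa+\kappa^{-1})$ and its determinant is $0$, so by Cayley--Hamilton $A^2=(\operatorname{tr}A)A$), giving exactly $\delta=-(\kappa+\kappa^{-1})$ as required by \eqref{eq:deltas}. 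For $j=0$: the matrix $\left(\begin{smallmatrix}-\kappa_0^{-1} & \psi_0\\ \psi_0^{-1} & -\kappa_0\end{smallmatrix}\right)$ has trace $-(\kappa_0+\kappa_0^{-1})$ and determinant $\kappa_0^{-1}\kappa_0 - \psi_0\psi_0^{-1}=0$, so again Cayley--Hamilton gives that it squares to $-(\kappa_0+\kappa_0^{-1})$ times itself; dividing by $(\kappa\kappa_0^{-1}+\kappa^{-1}\kappa_0)$ once more, one reads off $\hat\rho(e_0)^2 = \frac{-(\kappa_0+\kappa_0^{-1})}{\kappa\kappa_0^{-1}+\kappa^{-1}\kappa_0}\,\hat\rho(e_0)=\delta_0\hat\rho(e_0)$, matching \eqref{eq:deltas}. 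The case $j=n$ is identical with $\psi_n^{-1},\psi_n$ in place of $\psi_0,\psi_0^{-1}$ and $\kappa_n,\kappa_n^{-1}$ swapped, and the determinant is still $\kappa_n\kappa_n^{-1}-\psi_n^{-1}\psi_n=0$, so the same computation yields $\hat\rho(e_n)^2=\delta_n\hat\rho(e_n)$.

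The relations \eqref{TLnoncommutingrelation} $e_ie_{i\pm1}e_i=e_i$ split into the interior case ($2\leq i\leq n-2$, or rather whenever all indices lie in $\{1,\dots,n-1\}$) and the two boundary cases involving $e_0,e_1$ and $e_{n-1},e_n$. The interior case $e_ie_{i+1}e_i=e_i$ with $1\leq i<n-1$ is the classical Temperley--Lieb identity for the $3$-leg operators built from $\left(\begin{smallmatrix}-\kappa & 1\\ 1 & -\kappa^{-1}\end{smallmatrix}\right)$, which I would check by a direct $8\times 8$ (really just a few nonzero entries) computation on legs $i,i+1,i+2$; this is where the precise normalization of the off-diagonal entries being $1$ and the diagonal entries being $-\kappa,-\kappa^{-1}$ is used. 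For the boundary relations, e.g. $e_1e_0e_1=e_1$ and $e_0e_1e_0=e_0$, one works on legs $1,2$ only: here the $\psi_0$-dependence enters, and the point is that the similarity transformation conjugating $\left(\begin{smallmatrix}-\kappa_0^{-1}&\psi_0\\\psi_0^{-1}&-\kappa_0\end{smallmatrix}\right)$ to $\left(\begin{smallmatrix}-\kappa_0^{-1}&1\\1&-\kappa_0\end{smallmatrix}\right)$ by $\operatorname{diag}(\psi_0^{1/2},\psi_0^{-1/2})_1$ commutes with $\hat\rho(e_1)$ up to the same conjugation, so these relations reduce to the $\psi_0=1$ case, which is a finite matrix check. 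I expect the boundary relations to be the main obstacle, both because the expressions are least symmetric there and because one must carefully track how the diagonal conjugation interacts with $\hat\rho(e_1)$; a clean way to organize it is to note $\hat\rho(e_1)$ is conjugation-invariant under $\operatorname{diag}(a,a^{-1})_1\otimes\operatorname{diag}(a^{-1},a)_2$-type transformations only in a restricted sense, so I would most likely just grind the $4\times 4$ products out. Alternatively—and this is perhaps the cleanest route—one can first prove the statement for $\psi_0=\psi_n=1$ by a direct check, then observe that conjugation by the invertible operator $\operatorname{diag}(\psi_0^{1/2},\psi_0^{-1/2})_1\cdot\operatorname{diag}(\psi_n^{-1/2},\psi_n^{1/2})_n$ sends the $\psi_0=\psi_n=1$ operators to the general ones while preserving $\hat\rho(e_i)$ for $1\leq i<n$ (since that operator acts trivially on legs $1$ and $n$ when $n\geq 3$, and a separate small check handles $n=2$), so the general case follows from the special one.
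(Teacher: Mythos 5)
Your overall strategy---direct verification of the defining relations \eqref{TLquadraticrelation}--\eqref{TLcommutingrelation}, with Cayley--Hamilton handling the quadratic relations (each defining $2\times 2$ block has determinant zero, so it squares to its trace times itself) and disjointness of tensor legs handling the commutation relations---is exactly what the paper intends (its proof is the single line ``this is a straightforward verification''), and those parts of your argument are correct. However, there is a concrete error in your treatment of the boundary relations: you list $e_0e_1e_0=e_0$ (and by symmetry one would add $e_ne_{n-1}e_n=e_n$) among the relations to be checked. Relation \eqref{TLnoncommutingrelation} is imposed only for the \emph{middle} index $i$ in the range $1\leq i<n$, so the only boundary instances are $e_1e_0e_1=e_1$ and $e_{n-1}e_ne_{n-1}=e_{n-1}$; the identities $e_0e_1e_0=e_0$ and $e_ne_{n-1}e_n=e_n$ are \emph{not} part of the presentation of $\textup{TL}(\underline{\delta})$, and in fact they \emph{fail} for these matrices. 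Writing $A$ for the $2\times 2$ matrix defining $\hat\rho(e_0)$, so that $\hat\rho(e_0)=A_1/c$ with $c=\kappa\kappa_0^{-1}+\kappa^{-1}\kappa_0$, and $E$ for the $4\times 4$ matrix defining $\hat\rho(e_1)$, one finds $(A_1EA_1)_{14}=\psi_0^2\neq 0$ while $(cA_1)_{14}=0$, so $\hat\rho(e_0)\hat\rho(e_1)\hat\rho(e_0)\neq\hat\rho(e_0)$. Had you carried out your plan literally you would have concluded, wrongly, that the lemma is false. The relation that is actually required, $EA_1E=cE$, does hold (a short computation in which the $\psi_0$-dependence cancels), and likewise at the right boundary.

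Your ``cleanest route'' at the end is also flawed: conjugation by $\operatorname{diag}(\psi_0^{1/2},\psi_0^{-1/2})_1$ does \emph{not} preserve $\hat\rho(e_1)$, since $\hat\rho(e_1)$ acts nontrivially on leg $1$; the conjugation rescales the off-diagonal entries of its middle $2\times 2$ block by $\psi_0^{\pm 1}$. More structurally, any diagonal gauge transformation fixing all $\hat\rho(e_i)$ with $1\leq i<n$ must (up to scalars on each leg) be of the form $\operatorname{diag}(r,1)^{\otimes n}$, and such a transformation sends $(\psi_0,\psi_n)$ to $(r\psi_0,r^{-1}\psi_n)$; it therefore preserves the product $\psi_0\psi_n$ and cannot reach general parameters from $\psi_0=\psi_n=1$. (Compare the remark preceding Theorem \ref{isomorphicrepresentations}: $\hat\rho^{\underline{\kappa}}_{\psi_0,\psi_n}\simeq\hat\rho^{\underline{\kappa}}_{\psi_0',\psi_n'}$ when $\psi_0\psi_n=\psi_0'\psi_n'$.) A reduction that does work is purely local: to verify $e_1e_0e_1=e_1$ conjugate on legs $1,2$ only by $\operatorname{diag}(\psi_0,1)^{\otimes 2}$, which fixes $E$ and gauges $\psi_0$ to $1$; argue symmetrically for $e_{n-1}e_ne_{n-1}=e_{n-1}$. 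With these corrections the verification goes through and coincides with the paper's.
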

\begin{proof}
This is a straightforward verification.
\end{proof}
We lift $\hat{\rho}$ to a representation 
\[
\rho=\rho^{\underline{\kappa}}_{\psi_0,\psi_n}: H(\underline{\kappa})\rightarrow
\textup{End}_{\mathbb{C}}\bigl((\mathbb{C}^2)^{\otimes n}\bigr)
\]
of the affine Hecke algebra via the surjection $\phi: H(\underline{\kappa})\twoheadrightarrow \textup{TL}(\underline{\delta})$, so
$\rho:=\hat{\rho}\circ\phi$. Then
\[
\rho(T_0):=\bar K_1,\qquad
\rho(T_i):=(\Upsilon\circ P)_{i,i+1},\qquad
\rho(T_n):=K_n
\]
($1\leq i<n$) with 
\begin{equation*}
\begin{split}
\bar K&:=\left(\begin{matrix} \kappa_0-\kappa_0^{-1} & \psi_0\\
\psi_0^{-1} & 0\end{matrix}\right),\qquad
K:=\left(\begin{matrix} 0 &\psi_n^{-1}\\
\psi_n & \kappa_n-\kappa_n^{-1}\end{matrix}\right),\\
\Upsilon&:=\left(\begin{matrix} \kappa & 0 & 0 & 0\\
0 & 1 & 0 & 0\\
0 & \kappa-\kappa^{-1} & 1 & 0\\
0 & 0 & 0 & \kappa\end{matrix}\right)
\end{split}
\end{equation*}
and $P:\mathbb{C}^2\otimes\mathbb{C}^2\rightarrow \mathbb{C}^2\otimes\mathbb{C}^2$ the flip operator
$P(v\otimes w)=P(w\otimes v)$. The braid relations of $T_0,\ldots,T_n$ imply that $\Upsilon$ is a solution of the quantum Yang-Baxter equation
and that $K$ and $\bar K$ are solutions to associated reflection equations (all equations without spectral parameter).
The solution $\Upsilon$ is the well-known solution of the quantum Yang-Baxter equation arising from the universal $R$-matrix of the
quantized universal enveloping algebra $\mathcal{U}_{1/\kappa}(\mathfrak{sl}_2)$ acting on $\mathbb{C}^2\otimes\mathbb{C}^2$, with
$\mathbb{C}^2$ viewed as the vector representation of $\mathcal{U}_{1/\kappa}(\mathfrak{sl}_2)$.

The representations $\rho$ and $\hat{\rho}$ are called spin representations of $H(\underline{\kappa})$ and
$\textup{TL}(\underline{\delta})$, respectively.
 
 \subsection{Principal series representation}\label{PrincipalSection}
 One of the main aims of this paper is to relate solutions of reflection quantum Knizhnik-Zamolodchikov equations for Heisenberg XXZ
 spin-$\frac{1}{2}$ chains with boundaries to Macdonald-Koornwinder type functions. 
Such a connection is known in the context of so-called principal series representations of affine Hecke algebras; see Subsection \ref{dCMsection} and references therein. 
Principal series representations are a natural class of finite dimensional representations of the affine Hecke algebra, which we define now first. 
In the second part of this subsection we show that the spin representation $\rho$ is isomorphic to a principal series representation.
  
Set $T:=\bigl(\mathbb{C}^*\bigr)^n$.  Note that $W_0\simeq S_n\ltimes (\pm 1)^n$ acts on $T$ by permutations and inversions of the coordinates.
\begin{defi}\label{basicdefprincipal}
Let $I\subseteq\{1,\ldots,n\}$.
\begin{enumerate}
\item[{\bf (i)}] Let $H_I(\underline{\kappa})$ be the subalgebra generated by $T_i$ ($i\in I$) and $Y^\lambda$ ($\lambda\in\mathbb{Z}^n$).
\item[{\bf (ii)}] Let $T_I^{\underline{\kappa}}$ be the elements $\gamma\in T$ satisfying $\gamma_n=\kappa_0\kappa_n$ if $n\in I$ and satisfying
$\gamma_i/\gamma_{i+1}=\kappa^2$ if $i\in \{1,\ldots,n-1\}\cap I$.
\end{enumerate}
\end{defi}

\begin{lem}
Let $\gamma\in T_I^{\underline{\kappa}}$.
There exists a unique algebra map $\chi_{I,\gamma}^{\underline{\kappa}}: H_I(\underline{\kappa})\rightarrow\mathbb{C}$ satisfying
\begin{equation*}
\begin{split}
\chi_{I,\gamma}^{\underline{\kappa}}(T_i)&=\kappa_i,\qquad i\in I,\\
\chi_{I,\gamma}^{\underline{\kappa}}(Y^\lambda)&=\gamma^\lambda,\qquad \lambda\in\mathbb{Z}^n.
\end{split}
\end{equation*}
\end{lem}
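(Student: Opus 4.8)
The plan is to produce the character $\chi_{I,\gamma}^{\underline{\kappa}}$ by exhibiting an explicit one-dimensional module for $H_I(\underline{\kappa})$ and checking that the prescribed values respect all the defining relations. First I would recall that $H_I(\underline{\kappa})$ is, by Definition \ref{basicdefprincipal}(i), generated by the elements $T_i$ for $i\in I$ together with the commutative subalgebra $\mathbb{C}[Y^{\pm 1}]=\langle Y^\lambda:\lambda\in\mathbb{Z}^n\rangle$. Uniqueness is then immediate: any algebra map out of $H_I(\underline{\kappa})$ is determined by its values on these generators, so the stated formulas $\chi(T_i)=\kappa_i$ and $\chi(Y^\lambda)=\gamma^\lambda$ leave no freedom. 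The content is therefore existence, i.e. well-definedness.

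For existence I would use the Bernstein–Zelevinsky presentation of the affine Hecke algebra of type $\widetilde{C}_n$ (as referenced via \cite{Lu}), in which $H(\underline{\kappa})$ is generated by the finite Hecke algebra part $T_1,\dots,T_n$ and the Laurent polynomial algebra $\mathbb{C}[Y^{\pm1}]$, subject to: (a) the braid and quadratic relations among the $T_i$; (b) the commutation relations among the $Y^\lambda$; and (c) the Lusztig–Bernstein cross relations expressing $T_i Y^\lambda - Y^{s_i\lambda}T_i$ in terms of $(Y^\lambda-Y^{s_i\lambda})$ divided by the appropriate $1-Y^{\alpha_i}$ factor, with the reflection $s_i$ and the parameter $\kappa_i$ (or $\kappa_0,\kappa_n$ at the ends). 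Restricting to the subalgebra $H_I(\underline{\kappa})$, the only relations that survive involve $T_i$ with $i\in I$ and arbitrary $Y^\lambda$. I then need to check that the assignment $T_i\mapsto\kappa_i$, $Y^\lambda\mapsto\gamma^\lambda$ kills every such relation. Relations (a) and (b) are trivially respected: $(\kappa_i-\kappa_i)(\kappa_i+\kappa_i^{-1})$ — wait, more precisely $(\kappa_i-\kappa_i)(\kappa_i+\kappa_i^{-1})=0$, and all braid and commutation relations become tautologies among scalars. The crux is relation (c): applying $\chi$ to the cross relation gives $\kappa_i\gamma^\lambda-\gamma^{s_i\lambda}\kappa_i = (\kappa_i-\kappa_i^{-1})\,\dfrac{\gamma^\lambda-\gamma^{s_i\lambda}}{1-\gamma^{\alpha_i^\vee}}$ (with the analogous end-node version involving $\kappa_0\kappa_n$), and one must verify this scalar identity precisely when $\gamma\in T_I^{\underline{\kappa}}$. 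The defining conditions on $T_I^{\underline{\kappa}}$ in Definition \ref{basicdefprincipal}(ii) — namely $\gamma_i/\gamma_{i+1}=\kappa^2$ for $i\in\{1,\dots,n-1\}\cap I$ and $\gamma_n=\kappa_0\kappa_n$ for $n\in I$ — are exactly the statements that $\gamma$ is fixed by $s_i$ on the relevant characters, equivalently that $\gamma^{\alpha_i}$ (the root-coordinate ratio) equals the value that makes both sides of the cross relation agree; in that case $\gamma^\lambda=\gamma^{s_i\lambda}$ and both sides vanish, while the potential pole $1-\gamma^{\alpha_i}$ in the denominator is cancelled by the vanishing numerator, so no singularity arises.

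Concretely, I would organize the verification node by node: for an interior index $i\in I$ with $1\le i<n$, the condition $\gamma_i=\kappa^2\gamma_{i+1}$ forces $s_i$ to fix $\gamma$, hence $\gamma^\lambda=\gamma^{s_i\lambda}$, and the cross relation image reads $0=0$; for the node $n\in I$, the condition $\gamma_n=\kappa_0\kappa_n$ is precisely the fixed-point condition for $s_n$ (recall $s_n$ inverts the last coordinate and the Lusztig relation at node $n$ carries the parameter $\kappa_n$ together with the $Y_n\mapsto\kappa_0\kappa_n$-type shift built into the $\widetilde C_n$ structure), again yielding $0=0$. The node $0$ never enters because $T_0\notin H_I(\underline{\kappa})$. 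Since every defining relation of $H_I(\underline{\kappa})$ is sent to a valid identity in $\mathbb{C}$, the assignment extends to an algebra homomorphism $\chi_{I,\gamma}^{\underline{\kappa}}$, establishing existence.

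The main obstacle I anticipate is purely bookkeeping rather than conceptual: one must pin down the exact form of the Bernstein–Zelevinsky/Lusztig cross relations in the type $\widetilde{C}_n$ convention adopted here — in particular the precise roles of $\kappa_0$ and $\kappa_n$ at the affine node versus the short end node, and how the element $\tau_n$ and the relation \eqref{taui} translate into the $Y$-variables — and then check that the "fixed point" conditions defining $T_I^{\underline{\kappa}}$ match these relations with no sign or inversion discrepancy. Once that dictionary is fixed, the verification is a short scalar computation at each node. An alternative, if one prefers to avoid invoking the Bernstein–Zelevinsky presentation explicitly, is to construct the module directly inside the principal series: induce the trivial-type character of $\mathbb{C}[Y^{\pm1}]\cdot\langle T_i:i\in I\rangle$ and isolate the obvious cyclic vector — but this is circular here, so I would stick with the relation-checking approach above.
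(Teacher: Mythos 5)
Your overall strategy --- uniqueness because the $T_i$ ($i\in I$) and the $Y^\lambda$ generate $H_I(\underline{\kappa})$, existence by checking that the assignment respects the Bernstein--Zelevinsky presentation --- is the right one, and is essentially what the proof cited by the paper (\cite[Lem.~2.5(i)]{St1}) carries out. The quadratic, braid and commutation relations are indeed trivially preserved, and the whole content sits in the Lusztig cross relations, as you say.

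However, your verification of the cross relations contains a genuine error: the conditions defining $T_I^{\underline{\kappa}}$ are \emph{not} fixed-point conditions for the $s_i$, and both sides of the cross relation do \emph{not} vanish. For an interior node $i\in I$ the condition is $\gamma_i/\gamma_{i+1}=\kappa^2$, whereas $s_i\gamma=\gamma$ would require $\gamma_i=\gamma_{i+1}$; for generic $\kappa$ these are incompatible, so $\gamma^{s_i\lambda}\neq\gamma^\lambda$ in general and your claimed ``$0=0$'' never materializes. The correct mechanism is that the rational coefficient in the cross relation, evaluated at $\gamma$, equals $\kappa_i$: in one standard normalization the relation reads
\[
T_iY^\lambda-Y^{s_i\lambda}T_i=(\kappa-\kappa^{-1})\,\frac{Y^\lambda-Y^{s_i\lambda}}{1-Y_{i+1}/Y_i},
\]
and applying $\chi$ gives $\kappa\,(\gamma^\lambda-\gamma^{s_i\lambda})$ on the left and $\frac{\kappa-\kappa^{-1}}{1-\kappa^{-2}}\,(\gamma^\lambda-\gamma^{s_i\lambda})=\kappa\,(\gamma^\lambda-\gamma^{s_i\lambda})$ on the right, so the two sides agree \emph{without} vanishing. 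Likewise at the node $n$ the cross relation carries the coefficient $\bigl((\kappa_n-\kappa_n^{-1})+(\kappa_0-\kappa_0^{-1})Y_n^{-1}\bigr)/(1-Y_n^{-2})$, and the condition $\gamma_n=\kappa_0\kappa_n$ (which is a fixed point of $s_n\colon\gamma_n\mapsto\gamma_n^{-1}$ only if $\kappa_0\kappa_n=\pm1$) is exactly what makes this coefficient evaluate to
\[
\frac{\kappa_n-\kappa_n^{-1}+(\kappa_0-\kappa_0^{-1})(\kappa_0\kappa_n)^{-1}}{1-(\kappa_0\kappa_n)^{-2}}
=\frac{\kappa_n\bigl(1-\kappa_0^{-2}\kappa_n^{-2}\bigr)}{1-\kappa_0^{-2}\kappa_n^{-2}}=\kappa_n .
\]
With this correction (and after pinning down the sign and inversion conventions you rightly flag, which decide whether the relevant ratio is $\gamma_i/\gamma_{i+1}$ or its inverse), your argument goes through; but as written the central computation is wrong even though the conclusion is true.
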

\begin{proof}
The proof is a straightforward adjustment of the proof of \cite[Lem. 2.5(i)]{St1}.
\end{proof}
We write $\mathbb{C}_{I,\gamma}$ for $\mathbb{C}$ viewed as $H_I(\underline{\kappa})$-module with representation map $\chi_{I,\gamma}^{\underline{\kappa}}$.
\begin{defi}
Let $I\subseteq\{1,\ldots,n\}$ and $\gamma\in T_I^{\underline{\kappa}}$.
The associated principal series module is the induced $H(\underline{\kappa})$-module $M_I^{\underline{\kappa}}(\gamma):=
\textup{Ind}_{H_I(\underline{\kappa})}^{H(\underline{\kappa})}(\mathbb{C}_{I,\gamma})$. We write $\pi_{I,\gamma}^{\underline{\kappa}}$ for the corresponding representation map.
\end{defi}
Concretely, $M_I^{\underline{\kappa}}(\gamma)=H(\underline{\kappa})\otimes_{H_I(\underline{\kappa})}\mathbb{C}_{I,\gamma}$ with the $H(\underline{\kappa})$-action
given by
\[
\pi_{I,\gamma}^{\underline{\kappa}}(h)(h^\prime\otimes_{H_I(\underline{\kappa})}1):=(hh^\prime)\otimes_{H_I(\underline{\kappa})}1,\qquad h,h^\prime\in H(\underline{\kappa}).
\]
The principal series module $M_I^{\underline{\kappa}}(\gamma)$ is finite dimensional. In fact, 
\[
\textup{Dim}_{\mathbb{C}}\bigl(M_I^{\underline{\kappa}}(\gamma)\bigr)=\#W_0/\#W_{0,I}
\]
with $W_{0,I}$ the subgroup of $W_0$ generated by the $s_i$ ($i\in I$). The $W_0$-orbit $W_0\gamma$ of $\gamma$ in $T$ is called the
central character of $M_I^{\underline{\kappa}}(\gamma)$.

\begin{prop} \label{rhopiequivalence}
Let $\psi_0,\psi_n\in\mathbb{C}^*$. 
We have
\[
\rho_{\psi_0,\psi_n}^{\underline{\kappa}} \simeq \pi_{J, \zeta}^{\underline{\kappa}}
\]
with
\begin{equation*}
\begin{split}
J&:=\{1,2,\ldots,n-1\},\\
 \zeta&:=(\psi_0\psi_n\kappa^{n-1},\psi_0\psi_n\kappa^{n-3},\ldots,\psi_0\psi_n\kappa^{1-n}).
\end{split}
\end{equation*}
\end{prop}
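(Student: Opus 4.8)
The plan is to realize $(\mathbb{C}^2)^{\otimes n}$, as a module under the spin representation $\rho=\rho^{\underline\kappa}_{\psi_0,\psi_n}$, as the induced module $M_J^{\underline\kappa}(\zeta)$, by producing an explicit cyclic vector and invoking Frobenius reciprocity. First I would check that $\zeta\in T_J^{\underline\kappa}$, so that $\pi_{J,\zeta}^{\underline\kappa}$ is defined: since $n\notin J$ there is no constraint at the last coordinate, while for $1\le i<n$ one has $\zeta_i/\zeta_{i+1}=\kappa^{(n-1-2(i-1))-(n-1-2i)}=\kappa^2$, exactly the condition in Definition~\ref{basicdefprincipal}(ii). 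By the universal property of induction, $H(\underline\kappa)$-module homomorphisms $M_J^{\underline\kappa}(\zeta)\to(\mathbb{C}^2)^{\otimes n}$ correspond bijectively to vectors $v\in(\mathbb{C}^2)^{\otimes n}$ with $\rho(T_i)v=\kappa v$ for all $i\in J$ and $\rho(Y^\lambda)v=\zeta^\lambda v$ for all $\lambda\in\mathbb{Z}^n$, the homomorphism attached to $v$ being $h\otimes_{H_J(\underline\kappa)}1\mapsto\rho(h)v$.

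Next I would exhibit such a $v$, namely $v:=v_+\otimes\cdots\otimes v_+$. Since $\rho(T_i)=(\Upsilon\circ P)_{i,i+1}$ and $(\Upsilon\circ P)(v_+\otimes v_+)=\kappa\,v_+\otimes v_+$, we get $\rho(T_i)v=\kappa v$ for $1\le i<n$. For the $Y$-eigenvalues one applies $\rho(Y_i)$, with $Y_i=T_{i-1}^{-1}\cdots T_1^{-1}T_0T_1\cdots T_{n-1}T_nT_{n-1}\cdots T_i$, to $v$ and follows the result leg by leg: the block $\rho(T_{n-1})\cdots\rho(T_i)$ contributes the scalar $\kappa^{n-i}$ (all legs carry $v_+$); then $\rho(T_n)=K_n$ turns the $n$-th leg into $\psi_n v_-$; the block $\rho(T_1)\cdots\rho(T_{n-1})$ moves this single $v_-$ to the first leg with no extra scalar, because $(\Upsilon\circ P)(v_+\otimes v_-)=v_-\otimes v_+$; then $\rho(T_0)=\bar K_1$ turns the first leg into $\psi_0 v_+$; and finally $\rho(T_1)^{-1}\cdots\rho(T_{i-1})^{-1}$ contributes $\kappa^{-(i-1)}$. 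Collecting scalars, $\rho(Y_i)v=\psi_0\psi_n\kappa^{n+1-2i}v=\zeta_i v$, hence $\rho(Y^\lambda)v=\zeta^\lambda v$. This yields an $H(\underline\kappa)$-module homomorphism $\Phi\colon M_J^{\underline\kappa}(\zeta)\to(\mathbb{C}^2)^{\otimes n}$ with $\Phi\bigl(1\otimes_{H_J(\underline\kappa)}1\bigr)=v$.

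Finally I would show $\Phi$ is an isomorphism. Both sides have dimension $2^n$: the right side visibly, and the left side because $\textup{Dim}_{\mathbb{C}}M_J^{\underline\kappa}(\zeta)=\#W_0/\#W_{0,J}=2^n\,n!/n!=2^n$, since $W_{0,J}=\langle s_1,\dots,s_{n-1}\rangle\cong S_n$. Hence it suffices to prove $\Phi$ surjective, i.e. that $v=v_+\otimes\cdots\otimes v_+$ generates $(\mathbb{C}^2)^{\otimes n}$ as an $H(\underline\kappa)$-module, equivalently (as $\rho=\hat\rho\circ\phi$ with $\phi$ surjective) as a $\textup{TL}(\underline\delta)$-module under $\hat\rho$. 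For this I would induct on the number $m$ of tensor legs carrying $v_-$, showing every standard basis vector of $(\mathbb{C}^2)^{\otimes n}$ lies in the submodule generated by $v$: the case $m=0$ is $v$; the interior generators $\hat\rho(e_i)$ ($1\le i<n$) move a single $v_-$ to an adjacent leg carrying $v_+$ (the relevant off-diagonal entry equals $1$), so by a standard connectivity argument all basis vectors with a fixed number of minuses lie in the submodule once one of them does; and applying $\hat\rho(e_n)$ to such a basis vector whose $n$-th leg carries $v_+$ produces, after subtracting a multiple of that vector, a basis vector with one more minus (the $v_+\mapsto\psi_n v_-$ entry of $\hat\rho(e_n)$ being nonzero). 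This exhausts $(\mathbb{C}^2)^{\otimes n}$ and finishes the proof.

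I expect the cyclicity step to be the part requiring the most care: the argument is routine but must be organized so that the boundary spin-flips $\hat\rho(e_0),\hat\rho(e_n)$ together with the weight-preserving interior generators genuinely reach every standard basis vector, and it is here that one uses the standing nonvanishing hypotheses ($\psi_0,\psi_n\in\mathbb{C}^*$ and $\kappa\kappa_j^{-1}+\kappa^{-1}\kappa_j\ne0$ for $j=0,n$, which make $\hat\rho$ well defined and the pertinent matrix entries nonzero). An alternative, less self-contained route would compose the known matchmaker--spin identification with a principal series realization of the matchmaker module; the argument above avoids that detour.
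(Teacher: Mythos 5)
Your proposal is correct and follows essentially the same route as the paper: check $\zeta\in T_J^{\underline\kappa}$, verify that $v_+^{\otimes n}$ is a $\chi_{J,\zeta}^{\underline\kappa}$-eigenvector for $H_J(\underline\kappa)$ (with the same leg-by-leg computation of $\rho(Y_i)v_+^{\otimes n}=\psi_0\psi_n\kappa^{n-2i+1}v_+^{\otimes n}$), obtain the intertwiner from the universal property of induction, and conclude by a dimension count. The only difference is that you spell out the cyclicity of $v_+^{\otimes n}$ (via the $e_n$ spin-flip plus connectivity under the interior $e_i$), which the paper simply records as an observation; your argument for it is sound.
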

\begin{proof}
Note that $\zeta_i/\zeta_{i+1}=\kappa^2$ for $i\in J$, hence $\pi_{J, \zeta}^{\underline{\kappa}}$ is well defined.

Observe that $\rho_{\psi_0,\psi_n}^{\underline{\kappa}}$ is a cyclic $H(\underline{\kappa})$-representation with cyclic vector $v_+^{\otimes n}$. Note furthermore that
\[
\rho_{\psi_0,\psi_n}^{\underline{\kappa}}(T_i)v_+^{\otimes n}=\kappa v_+^{\otimes n},\qquad i\in J.
\]
In addition, for $i\in\{1,\ldots,n\}$,
\begin{equation*}
\begin{split}
\rho_{\psi_0,\psi_n}^{\underline{\kappa}}(Y_i)v_+^{\otimes n}&=\kappa^{n-i}\rho_{\psi_0,\psi_n}^{\underline{\kappa}}(T_{i-1}^{-1}\cdots T_1^{-1}T_0\cdots T_{n-1}T_n)v_+^{\otimes n}\\
&=\psi_n\kappa^{n-i}\rho_{\psi_0,\psi_n}^{\underline{\kappa}}(T_{i-1}^{-1}\cdots T_1^{-1}T_0\cdots T_{n-1})(v_+^{\otimes (n-1)}\otimes v_-)\\
&=\psi_n\kappa^{n-i}\rho_{\psi_0,\psi_n}^{\underline{\kappa}}(T_{i-1}^{-1}\cdots T_1^{-1}T_0)(v_-\otimes v_+^{\otimes (n-1)})\\
&=\psi_0\psi_n\kappa^{n-i}\rho_{\psi_0,\psi_n}^{\underline{\kappa}}(T_{i-1}^{-1}\cdots T_1^{-1})v_+^{\otimes n}\\
&=\psi_0\psi_n\kappa^{n-2i+1}v_+^{\otimes n}.
\end{split}
\end{equation*}
Hence $\rho_{\psi_0,\psi_n}^{\underline{\kappa}}(h)v_+^{\otimes n}=\chi_{J, \zeta}^{\underline{\kappa}}(h)v_+^{\otimes n}$ for $h\in H_I(\underline{\kappa})$. We thus have a surjective 
linear map $M_J^{\underline{\kappa}}( \zeta)\twoheadrightarrow \bigl(\mathbb{C}^2\bigr)^{\otimes n}$ mapping 
$h\otimes_{H_I(\underline{\kappa})}1$ to $\rho_{\psi_0,\psi_n}^{\underline{\kappa}}(h)v_+^{\otimes n}$ for all $h\in H(\underline{\kappa})$, which intertwines the
$\pi_{J,\zeta}^{\underline{\kappa}}$-action on $M_J^{\underline{\kappa}}(\gamma)$ with the $\rho_{\psi_0,\psi_n}^{\underline{\kappa}}$-action
on $\bigl(\mathbb{C}^2\bigr)^{\otimes n}$. A dimension count shows that it is an isomorphism.
\end{proof}

\subsection{Matchmaker representation}\label{Matchsection}
In this subsection we revisit some of the key results from \cite{dGN} and reestablish them by different methods.

The diagrammatic realization of (boundary) Temperley-Lieb algebras (see, e.g., \cite{dG,dGP1,dGN}) gives rise to natural examples of Temperley-Lieb algebra actions on linear combinations of non-crossing matchings (or equivalently, link patterns) in which the Temperley-Lieb algebra generators act as matchmakers. 
We give a two-parameter family of such matchmaker representations of $\textup{TL}(\underline{\delta})$. 
We reestablish the result from \cite{dGN} that this family is generically isomorphic to the family of spin representations of $\textup{TL}(\underline{\delta})$ by constructing an explicit intertwiner  involving a subtle combinatorial expression given in terms of the various orientations of the pertinent non-crossing matchings. 
The origin of such explicit intertwiners traces back to the explicit link between loop models and the XXZ spin chain from \cite[\S 8]{MNGB}.
In the quasi-periodic context, related to the affine Hecke algebra of type $\widetilde{A}$ and the affine Temperley-Lieb algebra, such intertwiners were considered in \cite[\S 3.2]{ZJ2} and \cite[Prop. 3.2]{MDSA}.

Let $\mathcal{S}$ be the set of unordered pairs $\{i,j\}$ ($i\not=j$) of $[n+1]:=\{0,\ldots,n+1\}$ and denote by $\mathcal{P}(\mathcal{S})$ the power set  of $\mathcal{S}$. 
If $\mathfrak{p}\in\mathcal{P}(\mathcal{S})$ then $i\in [n+1]$ is said to be matched to $j\in [n+1]$ if $\{i,j\}\in\mathfrak{p}$.
\begin{defi}\label{perfectDef}
$\mathfrak{p}\in\mathcal{P}(\mathcal{S})$ is called a two-boundary non-crossing perfect matching if the following conditions hold:
\begin{enumerate}
\item[\bf{1.}] Each $i\in\{1,\ldots n\}$ is matched to exactly one $j\in [n+1]$.
\item[{\bf 2.}] There are no pairs $\{i,j\}, \{k,l\}\in\mathfrak{p}$ with $i<k<j<l$.
\item[{\bf 3.}] $\{0,n+1\}\not\in\mathfrak{p}$.
\end{enumerate}
We write $\mathcal{M}$ for the set of two-boundary non-crossing perfect matchings of $[n+1]$.
\end{defi}
Note that the definition of two-boundary non-crossing perfect matching allows $0$ and $n+1$ to have multiple matchings; these boundary points may also be unmatched.
We can give a diagrammatic representation of $\mathfrak{p}\in\mathcal{P}(\mathcal{S})$ by connecting $(i,0)$ and $(j,0)$ by an arc in the upper half plane for all $\{i,j\}\in\mathfrak{p}$. 
If $\mathfrak{p}\in\mathcal{M}$ then this can be done in such a way that the arcs do not intersect except possibly at the boundary endpoints $(0,0)$ and $(n+1,0)$.
For $\mathfrak p \in \mathcal{M}$ and $1 \leq i \leq n$ we write $m_i(\mathfrak p)\in [n+1]\setminus \{i\}$ for the unique element such that $\{i,m_i(\mathfrak{p})\}\in\mathfrak{p}$. 
Next we set 
\[ \alpha_i(\mathfrak p) = \begin{cases} - &\quad \hbox{ if } m_i(\mathfrak p)<i , \\ + &\quad \hbox{ if } m_i(\mathfrak p)>i. \end{cases} 
\]
The map $\nu: \mathcal{M} \to \{ +,- \}^n$ defined by $\nu(\mathfrak p):=(\alpha_1(\mathfrak p), \ldots, \alpha_n(\mathfrak p))$, is a bijection
(cf., e.g., \cite{PoThesis}). Consequently $\#\mathcal{M}=2^n$. 

Let $\mathbb{C}[\mathcal{M}]$ be the formal vector space over $\mathbb{C}$ with basis the two-boundary non-crossing perfect matchings
$\mathfrak{p}\in\mathcal{M}$.
We define now an action of the two-boundary Temperley-Lieb algebra $\textup{TL}(\underline{\delta})$ on $\mathbb{C}[\mathcal{M}]$, depending on two parameters
$\beta_0,\beta_1\in\mathbb{C}^*$, in which the generators $e_j$ act by so-called matchmakers. The idea is as follows. If $1\leq i<n$ and $\mathfrak{p}\in\mathcal{M}$
then the action of $e_i$ on $\mathfrak{p}$ replaces all pairs containing $i$ and/or $i+1$ and adds pairs $\{i,i+1\}$ and $\{m_{i}(\mathfrak{p}),m_{i+1}(\mathfrak{p})\}$.
When this does not produce a two-boundary  non-crossing perfect matching, either because $m_i(\mathfrak{p})$ and $m_{i+1}(\mathfrak{p})$ are both boundary points or because
$m_i(\mathfrak{p})=i+1$, then only the pair $\{i,i+1\}$ is matched and the omission of the second pair
is accounted for by a suitable multiplicative constant (in this case, the deleted pair is either an arc from $j$ to $j$ for some $0\leq j\leq n+1$ or an arc from $0$ to $n+1$).
Thus the action of $e_i$ on a two-boundary non-crossing perfect matching $\mathfrak{p}$ always has the effect that it matches up $i$ and $i+1$.
A similar matchmaker interpretation is given for the boundary operators $e_0$ and $e_n$, which match up $0$ with $1$
and $n$ with $n+1$, respectively.

We now give the precise formulation of the resulting matchmaker representation of $\textup{TL}(\underline{\delta})$.
For $\mathfrak{p}\in\mathcal{P}(\mathcal{S})$ and $1\leq i<n$ write $\mathfrak{p}_i$ to be the element in $\mathcal{P}(\mathcal{S})$ obtained from
$\mathfrak{p}$ by removing any pairs containing $i$ and/or $i+1$. Similarly, for $j=0,n$ we write $\mathfrak{p}_j$ to be the element in $\mathcal{P}(\mathcal{S})$ obtained from $\mathfrak{p}$ by removing any pair containing $1,n$, respectively. 
For $i \in \Z$ write $\text{pty}(i) = 0$ if $i$ is even and $\text{pty}(i)=1$ if $i$ is odd. 
\begin{thm}
Fix $\beta_0,\beta_1\in\mathbb{C}^*$. There exists a unique algebra homomorphism
\[
\omega = \omega^{\underline \delta}_{\beta_0,\beta_1}: \textup{TL}(\underline{\delta})\rightarrow\textup{End}_{\mathbb{C}}\bigl(\mathbb{C}[\mathcal{M}]\bigr)
\]
such that $e_j\mathfrak{p}:=\omega(e_j)\mathfrak{p}$ for $0\leq j\leq n$ and $\mathfrak{p}\in\mathcal{M}$ is given by
\[ e_i \mathfrak p =\begin{cases}
\delta \mathfrak p & \text{if } m_i(\mathfrak p)=i+1, \\
\delta_0^{\text{pty}(i-1)} (\mathfrak p_i \cup  \{ i,i+1\}) & \text{if } m_i(\mathfrak p)=0=m_{i+1}(\mathfrak p), \\
\delta_n^{\text{pty}(n+1-i)} (\mathfrak p_i \cup \{ i,i+1 \}) & \text{if } m_i(\mathfrak p)=n+1=m_{i+1}(\mathfrak p), \\
\beta_{\text{pty}(i)} (\mathfrak p_i \cup \{ i,i+1 \}) & \text{if } m_i(\mathfrak p)=0, \, m_{i+1}(\mathfrak p)=n+1, \\
\mathfrak p_i \cup \{ i,i+1 \}\cup \{ m_i(\mathfrak p),m_{i+1}(\mathfrak p) \}, & \text{otherwise},
\end{cases}
\]
for $1 \leq i < n$, and
\begin{align*}
e_0 \mathfrak p &= \begin{cases} 
\delta_0 \mathfrak p,&\quad \text{if } m_1(\mathfrak p)=0, \\
\beta_0 ( \mathfrak p_0 \cup \{ 0,1\}) &\quad \text{if } m_1(\mathfrak p)=n+1, \\
\mathfrak p_0 \cup \{ 0,1 \}\cup\{ 0, m_\mathfrak p(1) \}, &\quad \text{otherwise},
 \end{cases}\\
e_n \mathfrak p &= \begin{cases} 
\delta_n \mathfrak p &\quad \text{if } m_n(\mathfrak p)=n+1, \\
\beta_{\text{pty}(n)} ( \mathfrak p_n \cup \{ n,n+1 \}) &\quad \text{if } m_n(\mathfrak p)=0, \\
\mathfrak p_n \cup \{ n,n+1 \}\cup\{  m_n(\mathfrak p),n+1 \}, &\quad \text{otherwise}.
 \end{cases}
\end{align*}
\end{thm}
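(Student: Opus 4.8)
The plan is to verify that the prescribed assignments $\omega(e_j)$ satisfy the defining relations \eqref{TLquadraticrelation}--\eqref{TLcommutingrelation} of $\textup{TL}(\underline{\delta})$; by the universal property of the algebra given by generators and relations this is exactly what is needed for existence and uniqueness of the homomorphism $\omega$. First I would set up bookkeeping: since $\nu\colon\mathcal{M}\to\{+,-\}^n$ is a bijection, each relation can be checked on an arbitrary $\mathfrak{p}\in\mathcal{M}$, and the effect of $e_i$ on the arcs through $i,i+1$ depends only on the local configuration, i.e. on $m_i(\mathfrak p)$ and $m_{i+1}(\mathfrak p)$ together with the question of whether $m_i(\mathfrak p)$ and $m_{i+1}(\mathfrak p)$ are interior points, $0$, or $n+1$, and whether they coincide. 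So the bulk of the argument is a finite case analysis, organized by these local types.

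The commuting relations \eqref{TLcommutingrelation} $e_ie_j=e_je_i$ for $|i-j|>1$ are essentially immediate: the arcs and multiplicative constants produced by $e_i$ only involve the positions $i,i+1$ and the matching partners thereof, while $e_j$ only touches $j,j+1$ and their partners, and when $|i-j|>1$ these are disjoint except possibly at the boundary points $0,n+1$, which can carry multiple arcs and are never themselves "used up"; I would note that the parity prefactors $\text{pty}(i-1)$, $\text{pty}(n+1-i)$, $\text{pty}(i)$ depend only on $i$ (not on $\mathfrak p$), so they cause no interference. The quadratic relation \eqref{TLquadraticrelation} $e_i^2=\delta_i e_i$ I would check by applying $e_i$ to $e_i\mathfrak p$: after one application of $e_i$ the pair $\{i,i+1\}$ is present, so a second application falls into a $\delta$- or $\delta_0$- or $\delta_n$-type case (the first line for interior $i$, and the analogous first lines for $e_0,e_n$), and the constants are designed precisely so the two situations match — here one must track, e.g., that if $e_i\mathfrak p$ has $m_i=0=m_{i+1}$ with prefactor $\delta_0^{\text{pty}(i-1)}$ then $e_i$ applied again gives another $\delta_0^{\text{pty}(i-1)}$ only when that power is nonzero, i.e. $i$ even, and otherwise gives $\delta$; one checks case by case that the total matches $\delta_i$ times the single application. (For $e_0$ and $e_n$ the relevant scalar is simply $\delta_0$, resp. $\delta_n$, by inspection of the first lines.)

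The genuinely delicate relation is the "non-commuting" relation \eqref{TLnoncommutingrelation}, $e_ie_{i\pm1}e_i=e_i$ for $1\le i<n$ (together with the two boundary instances $e_0e_1e_0=e_0$ and $e_ne_{n-1}e_n=e_n$, which fall out of the same mechanism once the $e_0,e_n$ cases are set up in parallel). This is where I expect the main obstacle to lie, because here three successive applications interact and one must verify that the accumulated scalar prefactors — products of entries among $\delta,\delta_0,\delta_n,\beta_0,\beta_1$ and their parity-conditioned powers — cancel out to exactly $1$, while the resulting matching equals $e_i\mathfrak p$. I would handle it by fixing $i$ and the "sign pattern" of $\mathfrak p$ near $i-1,i,i+1,i+2$, tracking how the partner points $m_{i-1},m_i,m_{i+1},m_{i+2}$ (and whether they are interior, $0$, $n+1$, or coincide, e.g. an arc directly from $i$ to $i+1$, or a "rainbow" arc from something $<i$ to something $>i+1$) flow through the composition, and verifying in each branch that the middle application $e_{i\pm1}$ re-links the partners exactly so that the final $e_i$ restores $e_i\mathfrak p$ and the scalars telescope; the parity bookkeeping (that $\text{pty}(i-1)$ and $\text{pty}(i)$ are complementary, and similarly $\text{pty}(n+1-i)$, $\text{pty}(n-i)$) is precisely what makes, for instance, a $\delta_0$ produced at one step get cancelled by a $\delta_0^{-1}$-free passage at the next. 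Because the authors only assert "uniqueness and existence" I would present the verification as a (tedious but finite) case check, highlighting the two or three representative configurations — interior/interior, boundary/boundary, and the rainbow case that forces the $\beta$'s — and remarking that the remaining cases are analogous, which matches the paper's stated intention to revisit \cite{dGN} by explicit methods.
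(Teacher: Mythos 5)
Your overall strategy coincides with the paper's: one checks that the matchmakers satisfy the defining relations \eqref{TLquadraticrelation}--\eqref{TLcommutingrelation} by a finite case analysis on the local configuration of $\mathfrak p$ near the affected sites, the key combinatorial input being that an interior arc $\{i,j\}\in\mathfrak p$ with $1\le i,j\le n$ forces $\text{pty}(i)\ne\text{pty}(j)$. However, one step of your plan would fail as stated. The commuting relations \eqref{TLcommutingrelation} are not ``essentially immediate'', and your disjointness argument for them is incorrect: for $|i-j|>1$ the generators $e_i$ and $e_j$ can still act on the two endpoints of one and the same arc (an arc joining a site in $\{i,i+1\}$ to a site in $\{j,j+1\}$, or, in the boundary case, the arc $\{1,n\}$ for the pair $e_0,e_n$), so the two orders of application genuinely interact. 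In the boundary case the two orders produce the a priori different prefactors $\beta_0$ and $\beta_{\text{pty}(n)}$, and their equality is exactly where the parity fact is needed; this is the single case the paper chooses to work out explicitly in its proof. So this family of relations belongs with \eqref{TLnoncommutingrelation} in your ``delicate'' column, and the parity lemma must be invoked there as well.

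Two smaller slips: (a) in your check of $e_i^2=\delta_ie_i$, the scenario ``$e_i\mathfrak p$ has $m_i=0=m_{i+1}$'' cannot occur, since after one application of $e_i$ the sites $i$ and $i+1$ are matched to each other; hence the second application always lands in the first branch and contributes exactly $\delta=\delta_i$ (resp.\ $\delta_0$, $\delta_n$ for $e_0$, $e_n$), so the quadratic relation is simpler than you describe. (b) The identities $e_0e_1e_0=e_0$ and $e_ne_{n-1}e_n=e_n$ are not defining relations of $\textup{TL}(\underline{\delta})$ --- relation \eqref{TLnoncommutingrelation} with $1\le i<n$ only yields $e_1e_0e_1=e_1$ and $e_{n-1}e_ne_{n-1}=e_{n-1}$ --- so they need not (and should not) be verified. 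With these corrections your case-by-case verification goes through and is the same argument the paper gives.
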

\begin{proof}
It is sufficient to verify that the defining relations \eqref{TLquadraticrelation}-\eqref{TLcommutingrelation} of $\textup{TL}(\underline \delta)$ are 
satisfied by the matchmakers $\omega(e_j)$. 
This can be done by a straightforward case-by-case analysis, relying in part on the fact that if $m_i(\mathfrak p) = j$ for $1 \leq i,j \leq n$
and $\mathfrak{p}\in\mathcal{M}$, then 
$\text{pty}(i) \ne \text{pty}(j)$. 
An instructive example is
the case $e_0 e_n \mathfrak p = e_n e_0 \mathfrak p$ for $\mathfrak p \in \mathcal{M}$ such that $\{1,n\} \in \mathfrak p$.
Then
\begin{gather*}
e_0 e_n \mathfrak p = e_0 (\mathfrak p_n \cup \{ 1,n+1 \}\cup\{n,n+1\}) = 
\beta_0 (\mathfrak p_{0,n} \cup \{0,1\}\cup\{n,n+1\}) \\
e_n e_0 \mathfrak p = e_n (\mathfrak p_0 \cup \{ 0,1 \}\cup\{ 0,n\}) =
\beta_{\text{pty}(n)} (\mathfrak p_{0,n} \cup  \{0,1\}\cup\{n,n+1\}),
\end{gather*}
where $\mathfrak p_{i,j}:=(\mathfrak p_i)_j = (\mathfrak p_j)_i$ for $0 \leq i \leq n$. Since $\{1,n\}\in\mathfrak{p}$
the parities of $1$ and $n$ must be different, hence $\text{pty}(n)=0$. 
Hence $e_0 e_n \mathfrak p = e_n e_0 \mathfrak p$, as requested.
\end{proof}
See, e.g., \cite{dG,dGN} for a discussion of the diagrammatic representation of the action of Temperley-Lieb algebras
on matchings. It is very helpful for direct computations. An example of such a diagrammatic computation 
for our representation $\omega^{\underline{\delta}}_{\beta_0,\beta_1}$ of $\textup{TL}(\underline{\delta})$ on $\mathbb{C}[\mathcal{M}]$  ($n=3$) is
\[ \begin{array}{ccccc}
\begin{minipage}[c]{20mm} \begin{tikzpicture}[scale = 1/2]
\draw[gray] (0,-4) -- (0,2); \draw[gray] (4,-4) -- (4,2); \draw[gray] (0,0) -- (4,0); \draw[gray] (0,-2) -- (4,-2); \draw[gray] (0,-4) -- (4,-4);
\fill (1,0) circle (3pt); \fill (2,0) circle (3pt); \fill (3,0) circle (3pt); \fill (1,-2) circle (3pt); \fill (2,-2) circle (3pt); \fill (3,-2) circle (3pt);
\fill (1,-4) circle (3pt); \fill (2,-4) circle (3pt); \fill (3,-4) circle (3pt);
\draw (0,-.75) .. controls (1,-.75) .. (1,0) .. controls (1,1) .. (4,1);
\draw (2,0) .. controls (2,1) and (3,1) .. (3,0) -- (3,-2) .. controls (3,-3) and (2,-3) .. (2,-2) -- (2,0);
\draw (0,-1.25) .. controls (1,-1.25) .. (1,-2) -- (1,-4);
\draw (2,-4) .. controls (2,-3) and (3,-3) .. (3,-4);
\end{tikzpicture} \end{minipage} \vspace{1mm}
&=&
\beta_0 \, \begin{minipage}[c]{20mm} \begin{tikzpicture}[scale = 1/2]
\draw[gray] (0,-2) -- (0,2); \draw[gray] (4,-2) -- (4,2); \draw[gray] (0,-2) -- (4,-2);
\fill (1,-2) circle (3pt); \fill (2,-2) circle (3pt); \fill (3,-2) circle (3pt);
\draw (2,0) .. controls (2,1) and (3,1) .. (3,0) .. controls (3,-1) and (2,-1) .. (2,0);
\draw (0,.75) .. controls (1,.75) .. (1,0) -- (1,-2);
\draw (2,-2) .. controls (2,-1) and (3,-1) .. (3,-2);
\end{tikzpicture} \end{minipage}
&=&
\beta_0 \delta \, \begin{minipage}[c]{20mm} \begin{tikzpicture}[scale = 1/2]
\draw[gray] (0,0) -- (0,2); \draw[gray] (4,0) -- (4,2); \draw[gray] (0,0) -- (4,0);
\fill (1,0) circle (3pt); \fill (2,0) circle (3pt); \fill (3,0) circle (3pt);
\draw (0,.75) .. controls (1,.75) .. (1,0) ;
\draw (2,0) .. controls (2,1) and (3,1) .. (3,0);
\end{tikzpicture} \end{minipage} \\
e_2 e_0 \bigl( (+,+,-) \bigr)&=& \beta_0 e_2 \bigl( (-,+,-) \bigr) &=& \beta_0 \delta (-,+,-) \end{array}
\]
where the left and right vertical line should be thought of as the boundary point $0$ and $n+1$, respectively, and where we have represented $\mathfrak{p}\in\mathcal{M}$ by its image under the bijection $\nu:\mathcal{M}\overset{\sim}{\longrightarrow} \{+,-\}^3$.

Note that the spin representation $\hat\rho^{\underline \kappa}_{\psi_0,\psi_n}$ is isomorphic to $\hat\rho^{\underline\kappa}_{\psi_0^\prime,\psi_n^\prime}$
if $\psi_0\psi_n=\psi_0^\prime\psi_n^\prime$ in view of Proposition \ref{rhopiequivalence}.
We now show that the $2^n$-dimensional $\textup{TL}(\underline{\delta})$-representations
$\hat\rho^{\underline{\kappa}}_{\psi_0,\psi_n}$ and $\omega^{\underline{\kappa}}_{\beta_0,\beta_1}$ are isomorphic for generic parameters if
\begin{equation}\label{parameterproduct}
\beta_0 \beta_1 = \psi_0^{-1} \psi_n^{-1}  \cdot
\begin{cases}  \frac{(1+\kappa_0 \kappa_n^{-1} \psi_0 \psi_n)(1+\kappa_0^{-1} \kappa_n \psi_0 \psi_n)}{(\kappa \kappa_0^{-1} + \kappa^{-1} \kappa_0) (\kappa \kappa_n^{-1} + \kappa^{-1} \kappa_n)} &\quad \hbox{ if } n \text{ odd} \\
\frac{(1-\kappa^{-1} \kappa_0 \kappa_n \psi_0 \psi_n)(1-\kappa \kappa_0^{-1} \kappa_n^{-1} \psi_0 \psi_n)}{(\kappa \kappa_0^{-1} + \kappa^{-1} \kappa_0) (\kappa \kappa_n^{-1} + \kappa^{-1} \kappa_n)} &\quad \hbox{ if } n \text{ even} \end{cases} 
\end{equation}
by writing down an explicit intertwiner (for a different approach, see \cite{dGN}). 

To define the intertwiner we write for $h \in \{0,1\}$ and $\mathfrak p \in \mathcal{M}$,
\begin{gather*}
L_{0,h}(\mathfrak p) := \# \left\{ \{ i,0\} \in \mathfrak p \, \mid \, 1 \leq i \leq n \text{ and } \text{pty}(i)=h \right\}, \\
L_{n,h}(\mathfrak p) := \# \left\{ \{ i,n+1\} \in \mathfrak p \, \mid \, 1 \leq i \leq n \text{ and } \text{pty}(i)=h \right\}. \end{gather*}
A straightforward induction argument on $n$ shows that
\begin{equation}\label{Lsum}\sum_{j \in \{0,n\}} \sum_{h \in \{0,1\}} (-1)^h L_{j,h}(\mathfrak p) = -\text{pty}(n)
\end{equation}
for all $\mathfrak p \in \mathcal{M}$. Define 
\[ M(\mathfrak p) :=  \prod_{j \in \{0,n\} } \prod_{ h \in \{0,1\}} M_{j,h}^{L_{j,h}(\mathfrak p)}, \]
with $M_{j,h}\in\mathbb{C}^*$ 
satisfying
\begin{equation} \label{Mrelations}
\begin{gathered}
M_{j,0} M_{j,1} = \psi_j^{-1} (\kappa \kappa_j^{-1} + \kappa^{-1} \kappa_j)^{-1}, \qquad j\in\{0,n\}, \\
M_{0,0} M_{n,1} =  \beta_0 \cdot \begin{cases} (1+\kappa_0 \kappa_n^{-1} \psi_0 \psi_n)^{-1} &\quad \hbox{ if } n \textup{ odd} \\ 
(1-\kappa^{-1} \kappa_0 \kappa_n \psi_0 \psi_n)^{-1} &\quad \hbox{ if } n \textup{ even}. \end{cases}
\end{gathered}
\end{equation}
By \eqref{Lsum} the conditions \eqref{Mrelations} fix $M(\mathfrak p)$ up to a multiplicative constant.

Next, define an \emph{oriented two-boundary non-crossing perfect matching} as a two-boundary non-crossing perfect matching $\mathfrak{p}$
with a chosen ordering of each pair in $\mathfrak{p}$. 
We write ordered pairs as $(i,j)$ and say that the pair (or, with the diagrammatic realization in mind, the associated
connecting arc) is oriented from $i$ to $j$. We write $\vec{\mathcal{M}}$ for the set of oriented two-boundary non-crossing perfect matchings.
Let $\text{Forg}: \vec{\mathcal{M}}\twoheadrightarrow\mathcal{M}$ be the canonical surjective map which forgets the orientation.
For $\vec{\mathfrak{p}}\in\vec{\mathcal{M}}$ we write $\mathfrak{p}:=\textup{Forg}(\vec{\mathfrak{p}})$.

Given $\vec{\mathfrak{p}} \in \overrightarrow{\mathcal{M}}$ and $i=1,\ldots,n$ we set $r_i(\vec{\mathfrak{p}})$ equal to $+$ or $-$ if the oriented arc connecting $i$ and $m_i(\mathfrak{p})$ is oriented outwards or inwards at $i$, respectively. In other words, $r_i(\vec{\mathfrak{p}})=+$ if 
$(i,m_i(\mathfrak{p}))\in\vec{\mathfrak{p}}$ and $r_i(\mathfrak{p})=-$ if $(m_i(\mathfrak{p}),i) \in\vec{\mathfrak{p}}$.
We define a mapping $v: \overrightarrow{\mathcal{M}} \hookrightarrow (\C^2)^{\otimes n}$ by
\[ v(\vec{\mathfrak p}) = v_{r_1(\vec{\mathfrak{p}})} \otimes \cdots \otimes v_{r_n(\vec{\mathfrak{p}})}. \]
As an example with $n=4$ we have $v(\{ (0,1), (3,2), (5,4)  \}) = v_- \otimes v_- \otimes v_+ \otimes v_-$. 

For $h \in \{0,1\}$ and $\vec{\mathfrak p} \in \overrightarrow{\mathcal{M}}$ we set
\begin{gather*}
N_{0,h}(\vec{\mathfrak p}) := \# \left\{ (i,0) \in \vec{\mathfrak p} \, \mid \, 1 \leq i \leq n \text{ and } \text{pty}(i)=h \right\}, \\
N_{n,h}(\vec{\mathfrak p}) := \# \left\{ (n+1,i) \in \vec{\mathfrak p} \, \mid \, 1 \leq i \leq n \text{ and } \text{pty}(n+1-i)=h  \right\}.
\end{gather*}
Finally, for $\vec{\mathfrak p} \in \overrightarrow{\mathcal{M}}$ define
\[ \text{or}(\vec{\mathfrak p}) = \# \left\{ (j,i) \in \vec{\mathfrak p} \, \mid \, 1 \leq i<j \leq n \right\} + N_{0,0}(\vec{\mathfrak p}) + N_{n,0}(\vec{\mathfrak p}). \]

\begin{thm} \label{isomorphicrepresentations}
Suppose that the parameters $(\underline{\delta},\beta_0,\beta_1)$ are related to
$(\underline{\kappa},\psi_0,\psi_n)$ by \eqref{eq:deltas} and \eqref{parameterproduct}.

For generic parameters 
the linear map $\Psi:=\Psi^{\underline{\kappa}}_{\psi_0,\psi_n}: \mathbb{C}[\mathcal{M}]\rightarrow\bigl(\mathbb{C}^2\bigr)^{\otimes n}$, defined by
\[
\Psi(\mathfrak{p}):=M(\mathfrak{p})\sum_{\vec{\mathfrak p} \in \textup{Forg}^{-1}(\mathfrak p)} (-\kappa)^{-\text{or}(\vec{\mathfrak p})} \biggl( \prod_{j \in \{0,n\}} (-\kappa_j)^{N_{j,0}(\vec{\mathfrak p})-N_{j,1}(\vec{\mathfrak p})} \psi_j^{N_{j,0}(\vec{\mathfrak p})+N_{j,1}(\vec{\mathfrak p})} \biggr) v(\vec{\mathfrak p})
\]
for $\mathfrak{p}\in\mathcal{M}$, defines an isomorphism of $\textup{TL}(\underline{\delta})$-modules with respect to the $\textup{TL}(\underline{\delta})$-actions
$\omega^{\underline{\delta}}_{\beta_0,\beta_1}$ on $\mathbb{C}[\mathcal{M}]$ and $\hat{\rho}^{\underline{\kappa}}_{\psi_0,\psi_n}$ on $\bigl(\mathbb{C}^2\bigr)^{\otimes n}$.
\end{thm}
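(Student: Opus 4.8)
The plan is to show directly that $\Psi$ is a bijective intertwiner by verifying the intertwining relation $\Psi\circ\omega^{\underline\delta}_{\beta_0,\beta_1}(e_j)=\hat\rho^{\underline\kappa}_{\psi_0,\psi_n}(e_j)\circ\Psi$ on each basis element $\mathfrak p\in\mathcal M$ for every generator $e_j$, $0\le j\le n$, and then observing that for generic parameters $\Psi$ is invertible. The invertibility will follow from a triangularity argument: order the basis $\mathcal M$ (equivalently $\{+,-\}^n$ via the bijection $\nu$) so that the ``diagonal'' term of $\Psi(\mathfrak p)$, which is the oriented matching $\vec{\mathfrak p}$ all of whose arcs are oriented, say, rightward/outward in a canonical way, contributes the basis vector $v_{\nu(\mathfrak p)_1}\otimes\cdots\otimes v_{\nu(\mathfrak p)_n}=v(\mathfrak p)$ with a nonzero coefficient; the remaining oriented matchings in $\textup{Forg}^{-1}(\mathfrak p)$ contribute vectors $v(\vec{\mathfrak p})$ that are strictly lower in the chosen order. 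Thus the matrix of $\Psi$ in these bases is triangular with nonzero diagonal entries (the diagonal entries being products of $M(\mathfrak p)$, powers of $-\kappa$, $-\kappa_j$, $\psi_j$, all in $\mathbb C^*$), hence $\Psi$ is a linear isomorphism; since $M(\mathfrak p)$ is only defined up to a global constant, we fix that constant arbitrarily.

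The core of the argument is the intertwining check, which I would organize according to the case structure in the definition of $\omega(e_i)$. Fix $i$ with $1\le i<n$ and $\mathfrak p\in\mathcal M$. On the right-hand side, $\hat\rho(e_i)$ acts only on tensor legs $i,i+1$ by the rank-one-ish $4\times 4$ matrix with nonzero block $\begin{pmatrix}-\kappa&1\\1&-\kappa^{-1}\end{pmatrix}$ on the middle $(v_+\otimes v_-,v_-\otimes v_+)$ subspace, annihilating $v_+\otimes v_+$ and $v_-\otimes v_-$; concretely $\hat\rho(e_i)$ sends $v_-\otimes v_+\mapsto v_+\otimes v_- - \kappa^{-1}\,v_-\otimes v_+$ and $v_+\otimes v_-\mapsto -\kappa\,v_+\otimes v_- + v_-\otimes v_+$, and kills the other two. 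So in $\hat\rho(e_i)\Psi(\mathfrak p)$ only those oriented matchings $\vec{\mathfrak p}$ with $r_i(\vec{\mathfrak p})\ne r_{i+1}(\vec{\mathfrak p})$ survive; these are exactly the ones where the arc through $i$ and the arc through $i+1$ can be ``cut and re-glued'' into an arc $\{i,i+1\}$ plus an arc $\{m_i(\mathfrak p),m_{i+1}(\mathfrak p)\}$, matching the combinatorics of $\omega(e_i)$. I would then verify that, summing over orientations, the coefficients $(-\kappa)^{-\textup{or}(\vec{\mathfrak p})}\prod_j(-\kappa_j)^{N_{j,0}-N_{j,1}}\psi_j^{N_{j,0}+N_{j,1}}$ combine correctly: the powers of $\kappa$ from the matrix entries $-\kappa,-\kappa^{-1},1$ are exactly what is needed to convert $\textup{or}(\vec{\mathfrak p})$ and the $M$-weight of $\mathfrak p$ into those of the output matching. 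The boundary cases (where $m_i(\mathfrak p)$ and/or $m_{i+1}(\mathfrak p)$ lands at $0$ or $n+1$, or $m_i(\mathfrak p)=i+1$) are where the scalars $\delta,\delta_0^{\textup{pty}(i-1)},\delta_n^{\textup{pty}(n+1-i)},\beta_{\textup{pty}(i)}$ enter on the $\omega$ side; on the $\hat\rho$ side they must emerge from collapsing a sum of two oriented contributions using \eqref{eq:deltas}, \eqref{parameterproduct}, and the defining relations \eqref{Mrelations} for the $M_{j,h}$. The analogous but simpler checks for $e_0$ and $e_n$, where $\hat\rho(e_0)$ and $\hat\rho(e_n)$ act on a single leg by $\frac{1}{\kappa\kappa_j^{-1}+\kappa^{-1}\kappa_j}\begin{pmatrix}-\kappa_0^{-1}&\psi_0\\\psi_0^{-1}&-\kappa_0\end{pmatrix}$ (resp. its $n$-analogue), proceed the same way and crucially use the identity \eqref{Lsum} and the second relation in \eqref{Mrelations} to reproduce $\beta_0$ and $\beta_{\textup{pty}(n)}$.

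I expect the main obstacle to be the bookkeeping in the boundary-collapse cases: there, two distinct oriented matchings $\vec{\mathfrak p}$ over the same output matching must be added, their $\kappa$- and $\psi$-weights compared, and the ratio shown to equal precisely the scalar prescribed by $\omega$ — this is exactly the computation that forces the relations \eqref{parameterproduct} and \eqref{Mrelations}, and getting all the parities ($\textup{pty}(i)\ne\textup{pty}(m_i(\mathfrak p))$ for interior matchings, the parity of $n$, the exponents $\textup{pty}(i-1)$, $\textup{pty}(n+1-i)$) consistent across the odd/even-$n$ dichotomy is delicate. A clean way to tame this is to first record, as preliminary lemmas, how $\textup{or}$, $N_{j,h}$, $L_{j,h}$ and $M$ transform under the elementary surgery $\mathfrak p\rightsquigarrow\mathfrak p_i\cup\{i,i+1\}\cup\{m_i(\mathfrak p),m_{i+1}(\mathfrak p)\}$ and its boundary degenerations, so that the final verification becomes a short substitution. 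Once the intertwining identity holds on all generators and all basis vectors, together with the triangularity-based invertibility, the theorem follows.
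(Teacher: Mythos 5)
Your intertwining step follows the same route as the paper's proof (which likewise reduces to a case-by-case verification on the generators $e_j$, organized essentially as you describe, with \eqref{Lsum}, \eqref{Mrelations} and \eqref{parameterproduct} entering in the boundary-collapse cases), so there is no issue there. The genuine gap is in your invertibility argument: the claimed triangularity of the matrix of $\Psi$ with respect to an ordering of the basis $\{+,-\}^n$ is false, and no choice of order can repair it. Already for $n=2$, take $\mathfrak p=\{\{0,1\},\{2,3\}\}$ with $\nu(\mathfrak p)=(-,+)$: reversing the arc $\{0,1\}$ to $(1,0)$ produces the component $v_+\otimes v_+$ of $\Psi(\mathfrak p)$ with nonzero coefficient proportional to $(-\kappa_0)^{-1}\psi_0$. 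On the other hand, for $\mathfrak p'=\{\{1,3\},\{2,3\}\}$ with $\nu(\mathfrak p')=(+,+)$, reversing the arc $\{1,3\}$ to $(3,1)$ produces the component $v_-\otimes v_+$ with nonzero coefficient proportional to $(-\kappa)^{-1}(-\kappa_n)\psi_n$. Triangularity would thus require simultaneously $(+,+)<(-,+)$ and $(-,+)<(+,+)$. The underlying reason is that reversing an interior arc, a $0$-arc, and an $(n{+}1)$-arc move the sign vector $(r_1,\ldots,r_n)$ in incompatible directions, so no partial order (nor any grading, e.g.\ by the number of $+$'s) places all non-canonical terms strictly below $v_{\nu(\mathfrak p)}$.

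The correct notion of ``leading term'' here is with respect to the parameters, not the basis order: every non-canonical orientation carries a strictly positive power of $\kappa^{-1}$, $\psi_0$ or $\psi_n$ in its coefficient, whereas the unique orientation with $\textup{or}(\vec{\mathfrak p})=N_{j,h}(\vec{\mathfrak p})=0$ (all interior arcs oriented left-to-right, all boundary arcs oriented away from $0$ and into $n+1$) has coefficient $1$ and contributes exactly $v_{\alpha_1(\mathfrak p)}\otimes\cdots\otimes v_{\alpha_n(\mathfrak p)}$. Hence the map $\overline\Psi$ obtained by dropping the prefactor $M(\mathfrak p)$ depends polynomially on $(\kappa^{-1},\psi_0,\psi_n)$ and specializes at $\kappa^{-1}=\psi_0=\psi_n=0$ to the bijection $\mathfrak p\mapsto v_{\nu(\mathfrak p)}$; its determinant is a polynomial in these parameters that is nonzero at one point, hence generically nonzero, and since $M(\mathfrak p)\in\mathbb C^*$ the same holds for $\Psi$. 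This specialization argument is exactly what the paper uses, and it is the natural replacement for your triangularity step; the rest of your outline then goes through.
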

\begin{proof}
The intertwining property $\Psi(\omega^{\underline{\delta}}_{\beta_0,\beta_1}(e_j)\mathfrak{p})=
\hat{\rho}^{\underline{\kappa}}_{\psi_0,\psi_n}(e_j)(\mathfrak{p})$ for $0\leq j\leq n$ and $\mathfrak{p}\in\mathcal{M}$
follows by a careful case-by-case check.  

To show that $\Psi$ is an isomorphism for generic parameters, it suffices to show 
the modified linear map $\overline{\Psi}: \mathbb{C}[\mathcal{M}]\rightarrow\bigl(\mathbb{C}^2\bigr)^{\otimes n}$, defined by
\[
\overline{\Psi}(\mathfrak{p}):=\sum_{\vec{\mathfrak p} \in \textup{Forg}^{-1}(\mathfrak p)} (-\kappa)^{-\text{or}(\vec{\mathfrak p})} \biggl( \prod_{j \in \{0,n\}} (-\kappa_j)^{N_{j,0}(\vec{\mathfrak p})-N_{j,1}(\vec{\mathfrak p})} \psi_j^{N_{j,0}(\vec{\mathfrak p})+N_{j,1}(\vec{\mathfrak p})} \biggr) v(\vec{\mathfrak p})
\]
for $\mathfrak{p}\in\mathcal{M}$, is generically a linear isomorphism. Since $\overline{\Psi}$ depends polynomially on the parameters $\kappa^{-1},\psi_0,\psi_n$, it suffices to show that it is a linear isomorphism when $\psi_0=\psi_n=\kappa^{-1}=0$.  Then 
\[
\overline{\Psi}_{\psi_0=\psi_n=\kappa^{-1}=0}(\mathfrak{p})=v_{\alpha_1(\mathfrak{p})}\otimes v_{\alpha_2(\mathfrak{p})}\otimes\cdots\otimes v_{\alpha_n(\mathfrak{p})}
\]
for $\mathfrak{p}\in\mathcal{M}$, which defines a linear isomorphism $\overline{\Psi}: \mathbb{C}[\mathcal{M}]\overset{\sim}{\longrightarrow}
\bigl(\mathbb{C}^2\bigr)^{\otimes n}$  since the map $\nu(\mathfrak{p}):=(\alpha_1(\mathfrak{p}),\ldots,\alpha_n(\mathfrak{p}))$ is a bijection $\nu: \mathcal{M}\overset{\sim}{\longrightarrow}\{+,-\}^n$.
\end{proof}

\section{Integrable models}  \label{integrablesection}
\subsection{Baxterization}\label{Baxt}
In \cite[\S 5]{Jo} Jones analysed when braid group representations can be "Baxterized". 
Baxterization means that the action of the $\sigma_i$ ($1\leq i<n$) extend to $R$-matrices with spectral parameter, the essential building blocks for integrable lattice models of vertex type. 
The result \cite[Prop 2.18]{dGN} solves the Baxterization problem for the spin representations of two-boundary Temperley-Lieb algebras. 
However its proof, which is by direct computations, does not give insight in the Baxterization procedure. 

Cherednik's theory on double affine Hecke algebras gives a natural Baxterization procedure for arbitrary representations of
affine Hecke algebras. 
We explain this here briefly for the affine Hecke algebra $H(\underline{\kappa})$ of type $\widetilde{C}_n$.
It reproduces for the spin representations the earlier mentioned result \cite[Prop. 2.18]{dGN} of de Gier and Nichols.
\begin{prop} \label{Baxterization}
Let $\pi: H(\underline{\kappa})\rightarrow\End_{\mathbb{C}}(V)$ be a representation of $H(\underline{\kappa})$ and set
\begin{equation*}
\begin{split}
K_0^V(x;\underline{\kappa};\upsilon_0,\upsilon_n)&:=\frac{\pi(T_0^{-1})+(\upsilon_0^{-1}-\upsilon_0)x-x^2\pi(T_0)}{\kappa_0^{-1}(1-\kappa_0\upsilon_0x)(1+\kappa_0\upsilon_0^{-1}x)},\\
R_i^V(x;\underline{\kappa};\upsilon_0,\upsilon_n)&:=\frac{\pi(T_i^{-1})-x\pi(T_i)}{\kappa^{-1}(1-\kappa^2x)},\qquad 1\leq i<n,\\
K_n^V(x;\underline{\kappa};\upsilon_0,\upsilon_n)&:=\frac{\pi(T_n^{-1})+(\upsilon_n^{-1}-\upsilon_n)x-x^2\pi(T_n)}{\kappa_n^{-1}(1-\kappa_n\upsilon_nx)
(1+\kappa_n\upsilon_n^{-1}x)}
\end{split}
\end{equation*}
as elements in $\mathbb{C}(x)\otimes_{\mathbb{C}}\End_{\mathbb{C}}(V)$. 
Then we have, as endomorphisms of $V$ with rational dependence on the spectral parameters,
\begin{equation*}
\begin{split}
K_0^V(x)R_1^V(xy)K_0^V(y)R_1^V(y/x)&=R_1^V(y/x)K_0^V(y)R_1^V(xy)K_0^V(x),\\
R_i^V(x)R_{i+1}^V(xy)R_i^V(y)&=R_{i+1}^V(y)R_i^V(xy)R_{i+1}^V(x),\qquad 1\leq i <n-1,\\
K_n^V(y)R_{n-1}^V(xy)K_n^V(x)R_{n-1}^V(x/y)&=R_{n-1}^V(x/y)K_n^V(x)R_{n-1}^V(xy)K_n^V(y),
\end{split}
\end{equation*}
and 
\begin{equation*}
\begin{split}
K_0^V(x)K_0^V(x^{-1})&=\Id_V=K_n^V(x)K_n^V(x^{-1}),\\
R_i^V(x)R_i^V(x^{-1})&=\Id_V,\qquad 1\leq i<n,
\end{split}
\end{equation*}
as well as 
\begin{equation*}
\begin{split}
\lbrack K_0^V(x),R_i^V(y)\rbrack&=0,\qquad 2\leq i<n,\\
\lbrack K_n^V(x),R_i^V(y)\rbrack&=0,\qquad 1\leq i<n-1,\\
\lbrack R_i^V(x),R_j^V(y)\rbrack&=0,\qquad |i-j|\geq 2,\\
\lbrack K_0^V(x),K_n^V(y)\rbrack&=0,
\end{split}
\end{equation*}
where we have suppressed the dependence on $\underline{\kappa}$, $\upsilon_0$ and $\upsilon_n$.
The original action of the affine Hecke algebra is recovered by specializing the spectral parameter to $0$, 
\[
K_0^V(0)=\kappa_0\pi(T_0^{-1}),\qquad R_i^V(0)=\kappa\pi(T_i^{-1}),\qquad K_n^V(0)=\kappa_n\pi(T_n^{-1})
\]
for $1\leq i<n$.
When specializing the spectral parameter to $1$ we recover the identity:
\[ \begin{split} K_0^V(1) &= \Id_V =  K_n^V(1), \\
R_i^V(1) &= \Id_V, \qquad 1 \leq i < n. \end{split} \]
\end{prop}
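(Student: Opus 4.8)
The plan is to recognise every asserted identity as the image under $\pi$ of a \emph{universal} identity, and then to verify that universal identity by restricting to parabolic subalgebras of rank at most two. Replacing $\pi(T_j)$ by $T_j$ in the defining formulas yields elements $\mathbf{K}_0(x),\mathbf{R}_i(x),\mathbf{K}_n(x)$ of $\mathbb{C}(x)\otimes_{\mathbb{C}}H(\underline{\kappa})$, the scalar denominators being nonzero rational functions and hence invertible there, and applying the $\mathbb{C}(x)$-linear extension of $\pi$ returns $K_0^V(x),R_i^V(x),K_n^V(x)$. So it suffices to prove each relation for the boldface operators inside $\mathbb{C}(x,y)\otimes H(\underline{\kappa})$; faithfulness of $\pi$ is not needed.

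First I would dispatch the formal items. The specialisations follow directly from the formulas together with $T_j^{-1}=T_j-\kappa_j+\kappa_j^{-1}$: at $x=0$ the denominators evaluate to $\kappa_j^{-1}$, giving $\mathbf{R}_i(0)=\kappa T_i^{-1}$, $\mathbf{K}_0(0)=\kappa_0 T_0^{-1}$, $\mathbf{K}_n(0)=\kappa_n T_n^{-1}$, while at $x=1$ the numerator of $\mathbf{K}_0(1)$ collapses to $\kappa_0^{-1}-\kappa_0+\upsilon_0^{-1}-\upsilon_0$, which is exactly the value of its denominator there (and likewise for $\mathbf{R}_i(1)$ and $\mathbf{K}_n(1)$). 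The commutation relations are immediate from the corresponding commutations of Hecke generators: for all $n\geq 2$ one has $T_0T_n=T_nT_0$, $T_0T_i=T_iT_0$ for $2\leq i<n$, $T_nT_i=T_iT_n$ for $1\leq i<n-1$, and $T_iT_j=T_jT_i$ for $|i-j|\geq 2$, these pairs of nodes being non-adjacent in the $\widetilde{C}_n$ diagram. The unitarity relations are rank-one computations using only a quadratic relation: for instance $(T_i^{-1}-xT_i)(T_i^{-1}-x^{-1}T_i)=T_i^2+T_i^{-2}-x-x^{-1}=(\kappa-\kappa^{-1})^2+2-x-x^{-1}$, which equals the product $\kappa^{-1}(1-\kappa^2x)\cdot\kappa^{-1}(1-\kappa^2x^{-1})$ of the two denominators, so $\mathbf{R}_i(x)\mathbf{R}_i(x^{-1})=\Id$; the arguments for $\mathbf{K}_0$ and $\mathbf{K}_n$ are one-generator identities of the same shape, now also involving $\upsilon_0$, resp.\ $\upsilon_n$.

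The substance is the Yang-Baxter and reflection equations, and here the key observation is that each involves only two of the generators $T_0,\dots,T_n$: the braid identity for $\mathbf{R}_i,\mathbf{R}_{i+1}$ uses $T_i,T_{i+1}$, the first reflection equation uses $T_0,T_1$, and the last uses $T_{n-1},T_n$. By the standard fact that the subalgebra of $H(\underline{\kappa})$ generated by a set of simple generators spanning a sub-diagram of type $Z$ is isomorphic to the finite Hecke algebra $H(Z)$, it is enough to check each identity inside the Hecke algebra of type $A_2$ (for Yang-Baxter, both parameters equal to $\kappa$) or of type $C_2$ (for the two reflection equations), using only the pertinent braid relation ($aba=bab$, resp.\ $abab=baba$) and the two quadratic relations. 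In the $A_2$ case this is exactly Jones's Baxterization \cite{Jo}; the $C_2$ cases are its reflection-equation analogue and reproduce \cite[Prop. 2.18]{dGN}. Concretely I would substitute the eigenprojector form $\mathbf{R}_i(x)=p_i^{+}+\frac{x-\kappa^2}{1-\kappa^2x}\,p_i^{-}$, with $p_i^{\pm}$ the spectral idempotents of $T_i$, and the analogous forms for $\mathbf{K}_0,\mathbf{K}_n$, in which the Baxterization parameter $\upsilon_0$, resp.\ $\upsilon_n$, enters only through scalar coefficients, then clear the scalar denominators and verify the resulting polynomial identity among products of $p_i^{\pm}$ and $p_{i+1}^{\pm}$ in the $6$- or $8$-dimensional algebra.

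This last rank-two verification of the reflection equations is the only genuine computation and hence the main obstacle; everything else is formal. Conceptually, in Cherednik's framework these Baxterized operators are images of the normalised affine intertwiners with the spectral parameter carried by the torus $T$, so the relations above are repackagings of the braid relations of type $\widetilde{C}_n$; but for a self-contained argument the low-rank check, conveniently organised via the spectral idempotents of the two generators involved, is the most efficient route, and can if desired be confirmed by computer algebra.
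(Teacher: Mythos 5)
Your proposal is correct, but it takes a genuinely different route from the paper. You reduce every assertion to a universal identity in $\mathbb{C}(x,y)\otimes_{\mathbb{C}}H(\underline{\kappa})$ and then, for the three braid-type relations, to the two-generator Hecke algebras of type $A_2$ (six-dimensional) and $C_2$ (eight-dimensional), where the identity becomes a finite computation with the spectral idempotents of the two generators involved; your treatment of the formal items (specialization at $0$ and $1$, unitarity, and the commutations coming from non-adjacent nodes) is complete and correct as written. The paper instead derives the whole proposition at once from Noumi's embedding $\iota$ of $H(\underline{\kappa})$ into the algebra $\mathbb{C}(T)\#W$ of $q$-difference reflection operators (Theorem \ref{thmN}): inducing $V$ up to $\mathbb{C}(T)\#W$ yields the homomorphism $\nabla^V$ of Corollary \ref{corCrit}, a short computation gives $\nabla^V(s_i)=C_{s_i}^V(\cdot)(s_i\otimes\Id_V)$ with $C_{s_0}^V(\bm t)=K_0^V(q^{1/2}/t_1)$, $C_{s_i}^V(\bm t)=R_i^V(t_i/t_{i+1})$ and $C_{s_n}^V(\bm t)=K_n^V(t_n)$, and then the Coxeter relations of $W$ force the cocycle identities of Lemma \ref{reform}, which, after substituting the explicit $q$-dependent $W$-action on $T$, are precisely the asserted relations with spectral parameters. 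The paper's route thus buys all the braid-type identities simultaneously with no rank-two computation (that work is absorbed into the cited Theorem \ref{thmN}) and explains where the parameters $\upsilon_0,\upsilon_n$ and the substitutions $x=q^{1/2}/t_1$, $t_i/t_{i+1}$, $t_n$ come from; your route is elementary and self-contained in principle, but it leaves what you yourself identify as the only substantive step --- the $C_2$ reflection-equation identity with the extra $(\upsilon_0^{-1}-\upsilon_0)x$ term, and its $A_2$ analogue --- as a deferred polynomial verification. That verification does succeed (it is Jones's Baxterization \cite{Jo} in type $A$ and in effect the universal Hecke-algebra form of \cite[Prop. 2.18]{dGN} in type $C_2$), but to turn your outline into a complete proof you must actually clear denominators and carry out the check in the six- and eight-dimensional algebras, by hand or by computer algebra, rather than only indicate how it would go.
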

We suppress the dependence on $V$ in the notations if $V$ is the spin representation $\bigl(\rho^{\underline{\kappa}}_{\psi_0,\psi_n},\bigl(\mathbb{C}^2\bigr)^{\otimes n}\bigr)$.
Proposition \ref{Baxterization} extends
results of Cherednik (see, e.g., \cite[\S 1.3.2]{CBook} and references therein) 
to the "Koornwinder case" (also known as the $C^\vee C$-case). 

Before sketching the proof of Proposition \ref{Baxterization}, we first show that it produces 
Baxterizations of the solution $\Upsilon$ of the quantum Yang-Baxter equation and of the solutions
$\overline{K}$ and $K$ of the associated reflection equations when it is applied to the spin representations
of the affine Hecke algebra $H(\underline{\kappa})$
(see Subsection \ref{Spinrep} for the definitions of $\Upsilon,\overline{K}$ and $K$). Recall that $P:\mathbb{C}^2\otimes\mathbb{C}^2
\rightarrow\mathbb{C}^2\otimes\mathbb{C}^2$ is the flip operator.
\begin{cor}\label{rk}
For parameters $\underline{\kappa}$, $\upsilon_0,\upsilon_n, \psi_0$ and $\psi_n$ consider the matrices 
\begin{align*} 
\bar k(x) &:= \frac{\kappa_0}{(1-\kappa_0 \upsilon_0 x)(1+ \kappa_0 \upsilon_0^{-1} x)} \left( \begin{smallmatrix} (\kappa_0^{-1}-\kappa_0) x^2 + (\upsilon_0^{-1} - \upsilon_0) x & \psi_0 (1-x^2) \\ \psi_0^{-1}(1-x^2) & \kappa_0^{-1} -\kappa_0 + (\upsilon_0^{-1} - \upsilon_0) x \end{smallmatrix} \right), \\
r(x) &:=  \frac{1}{1-\kappa^2 x} \left( \begin{smallmatrix} 1 - \kappa^2 x & 0 & 0 & 0 \\ 0 & \kappa(1-x) & 1-\kappa^2 & 0 \\ 
0 & (1-\kappa^2)x & \kappa (1-x) & 0 \\ 0 & 0 & 0 & 1 - \kappa^2 x \end{smallmatrix} \right), \\
k(x) &:=  \frac{\kappa_n}{(1-\kappa_n \upsilon_n x)(1+ \kappa_n \upsilon_n^{-1} x)} \left( \begin{smallmatrix} \kappa_n^{-1}-\kappa_n + (\upsilon_n^{-1} - \upsilon_n) x & \psi_n^{-1} (1-x^2) \\  \psi_n(1-x^2) & (\kappa_n^{-1}-\kappa_n)x^2 + (\upsilon_n^{-1} - \upsilon_n) x \end{smallmatrix} \right). \end{align*}
Then 
\begin{enumerate}
\item[{\bf 1.}] $r(x)$ satisfies the quantum Yang-Baxter equation with spectral parameter
\begin{equation} \label{YBEnew}
r_{12}(x) r_{13}(xy) r_{23}(y) = r_{23}(y) r_{13}(xy) r_{12}(x)
\end{equation}
as endomorphisms of $\mathbb{C}^2\otimes\mathbb{C}^2\otimes\mathbb{C}^2$. Furthermore, 
$r(0)=\kappa\Upsilon^{-1}$ (Baxterization), $r(x)r_{21}(x^{-1})=\textup{Id}_{\mathbb{C}^2\otimes\mathbb{C}^2}$ (unitarity) and 
$r(1)=P$ (regularity).
\item[{\bf 2.}] $\overline{k}(x)$ satisfies the reflection equation with spectral parameter
\begin{equation}\label{REleftnew}
r_{12}(x/y) \bar k_2(x) r_{21}(xy) \bar k_1(y) = \bar k_1(y) r_{12}(xy) \bar k_2(x) r_{21}(x/y) 
\end{equation}
as endomorphisms of $\mathbb{C}^2\otimes\mathbb{C}^2$. 
Furthermore, $\overline{k}(0)=\kappa_0\overline{K}^{-1}$ (Baxterization),
$\overline{k}(x)\overline{k}(x^{-1})=\textup{Id}_{\mathbb{C}^2}$ (unitarity) and 
$\overline{k}(1)=\textup{Id}_{\mathbb{C}^2}=\overline{k}(-1)$ (regularity).
\item[{\bf 3.}] $k(x)$ satisfies the reflection equation with spectral parameter
\begin{equation}\label{RErightnew}
r_{12}(x/y) k_1(x) r_{21}(xy) k_2(y) = k_2(y) r_{12}(xy) k_1(x) r_{21}(x/y)
\end{equation}
as endomorphisms of $\mathbb{C}^2\otimes\mathbb{C}^2$. 
Furthermore, $k(0)=\kappa_nK^{-1}$ (Baxterization),
$k(x)k(x^{-1})=\textup{Id}_{\mathbb{C}^2}$ (unitarity) and 
$k(1)=\textup{Id}_{\mathbb{C}^2}=k(-1)$ (regularity).
\end{enumerate}
\end{cor}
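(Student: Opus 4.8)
The plan is to deduce Corollary \ref{rk} from Proposition \ref{Baxterization} by specializing to the spin representation $V=\bigl(\rho^{\underline{\kappa}}_{\psi_0,\psi_n},(\mathbb{C}^2)^{\otimes n}\bigr)$ of $H(\underline{\kappa})$ and then rewriting the resulting braided identities in the ordinary ($RTT$) normalization. First I would make the operators $K_0^V(x),R_i^V(x),K_n^V(x)$ of Proposition \ref{Baxterization} fully explicit for this $V$. Since $\rho(T_0)=\overline{K}_1$, $\rho(T_i)=(\Upsilon\circ P)_{i,i+1}$, $\rho(T_n)=K_n$, and the quadratic relation gives $\rho(T_j^{-1})=\rho(T_j)-(\kappa_j-\kappa_j^{-1})\Id$, each of these operators is the identity outside tensor leg $1$, the adjacent pair $(i,i+1)$, respectively leg $n$. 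Substituting into the defining formulas and simplifying the rational prefactors, a short matrix computation yields $K_0^V(x)=\overline{k}(x)_1$ and $K_n^V(x)=k(x)_n$ with $\overline{k}(x),k(x)$ the matrices in the statement, and $R_i^V(x)$ equal to $r(x)$ composed with the flip $P$ on legs $(i,i+1)$, with $r(x)$ the matrix in the statement; the flip records exactly the difference between the braided $R$-operator of Proposition \ref{Baxterization} and the $RTT$-normalized matrix $r(x)$.

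Next I would transcribe the relations of Proposition \ref{Baxterization} into \eqref{YBEnew}, \eqref{REleftnew}, \eqref{RErightnew} using the standard dictionary between braided and ordinary solutions. Inserting the identification of $R_i^V$ with $r$ and a flip into the braid-type Yang--Baxter relation $R_1^V(x)R_2^V(xy)R_1^V(y)=R_2^V(y)R_1^V(xy)R_2^V(x)$ on legs $1,2,3$ and cancelling the flips gives \eqref{YBEnew}; the braided left and right reflection equations, with $K_0^V=\overline{k}_1$ and $K_n^V=k_n$, become \eqref{REleftnew} and \eqref{RErightnew} after conjugating with $P$. The unitarity identities of Proposition \ref{Baxterization} turn into $\overline{k}(x)\overline{k}(x^{-1})=\Id$, $k(x)k(x^{-1})=\Id$ and, because of the flip, $r(x)r_{21}(x^{-1})=\Id$. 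The specialization at the spectral value $0$, namely $K_0^V(0)=\kappa_0\rho(T_0^{-1})$ and its analogues, combined with the identifications above and the definitions of $\overline{K},\Upsilon,K$, gives the Baxterization statements $\overline{k}(0)=\kappa_0\overline{K}^{-1}$, $r(0)=\kappa\Upsilon^{-1}$, $k(0)=\kappa_nK^{-1}$; the specialization at the spectral value $1$ gives $\overline{k}(1)=\Id$, $k(1)=\Id$ and $r(1)=P$. The two remaining regularity statements $\overline{k}(-1)=\Id$ and $k(-1)=\Id$ are not covered by Proposition \ref{Baxterization}; they follow from a one-line evaluation of the explicit $2\times2$ matrices at $x=-1$, where the off-diagonal entries vanish because of the factor $1-x^2$ and the remaining common diagonal entry cancels against the scalar prefactor.

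I expect the only real difficulty to be organizational: tracking the flip operators, the tensor-leg labels, and the various spectral-parameter substitutions $x$, $y$, $xy$, $x/y$, $y/x$ when passing from the braided equations of Proposition \ref{Baxterization} to \eqref{YBEnew}--\eqref{RErightnew}. In particular \eqref{REleftnew} features both $\overline{k}_1$ and $\overline{k}_2$ while the Proposition only has $K_0^V$ on a single leg, and $r_{21}$ appears alongside $r_{12}$; since $r(x)$ is not symmetric under leg interchange, conjugation by $P$ simultaneously permutes the tensor factors and reshuffles which argument lands in which $r$-factor, so the computation should be carried out carefully once and then the pattern transferred to the other two equations.
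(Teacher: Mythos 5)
Your proposal is correct and follows exactly the route of the paper's (very brief) proof: apply Proposition \ref{Baxterization} to the spin representation and identify $K_0^V(x)$, $R_i^V(x)$, $K_n^V(x)$ with $\bar k_1(x)$, $(r(x)\circ P)_{i,i+1}$, $k_n(x)$, after which the braided relations translate into \eqref{YBEnew}--\eqref{RErightnew} by tracking the flips. Your additional remarks — that the regularity at $x=-1$ is a direct evaluation not covered by the Proposition, and that the $P$-conjugation bookkeeping is the only delicate point — are accurate and fill in precisely what the paper leaves implicit.
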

\begin{proof}
Apply Proposition \ref{Baxterization} to the $H(\underline{\kappa})$-module $\bigl(\rho^{\underline{\kappa}}_{\psi_0,\psi_n},\bigl(\mathbb{C}^2\bigr)^{\otimes n}\bigr)$ and use that the associated linear operators $K_0(x), R_i(x) $ and $K_n(x)$ on $\bigl(\mathbb{C}^2\bigr)^{\otimes n}$ are given by $\overline{k}_1(x), (r(x)\circ P)_{i\,i+1}$ and $k_n(x)$, respectively ($1\leq i<n$).
\end{proof}
\begin{rema}
The solution $r(x)$ of the quantum Yang-Baxter equation is gauge-equivalent to the $R$-matrix of the XXZ spin-$\frac{1}{2}$ chain. 
Corollary \ref{rk} thus gives rise to a three-parameter family of solutions of the associated reflection equation (with the free parameters being $\kappa_0,\upsilon_0,\psi_0$). 
This three-parameter family of solutions of the reflection equation was written down before in \cite{dVGR} (also see \cite{Ne}).
\end{rema}

The proof of Proposition \ref{Baxterization} uses Noumi-Sahi's \cite{N,Sa} extension of Cherednik's \cite{CBook}
double affine Hecke algebra and its basic representation, notions that also play a central role in the theory on Koornwinder polynomials; see \cite{N,Sa}
and Subsection \ref{Kosection}. We sketch now some of the key steps of the proof of Proposition \ref{Baxterization}.

Fix $q\in\mathbb{C}^*$ and a square root $q^{\frac{1}{2}}$ once and for all. 
A $q$-dependent action of the affine Weyl group $W=W_0\ltimes\mathbb{Z}^n$ on $T=\bigl(\mathbb{C}^*\bigr)^n$ 
is given by
\begin{equation*}
\begin{split}
s_0\bm{t}&:=(qt_1^{-1},t_2,\ldots,t_n),\\
s_i\bm{t}&:=(t_1,\ldots,t_{i-1},t_{i+1},t_i,t_{i+2},\ldots,t_n),\qquad 1\leq i<n,\\
s_n\bm{t}&:=(t_1,\ldots,t_{n-1},t_n^{-1}).
\end{split}
\end{equation*}
Note that $\tau( \mu)\bm{t}=q^{ \mu}\bm{t}:=(q^{\mu_1}t_1,\ldots,q^{\mu_n}t_n)$ for $ \mu = (\mu_1,\ldots,\mu_n) \in \mathbb{Z}^n$.
By transposition, $W$ acts on the fields $\mathbb{C}(T)$ and $\mathcal{M}(T)$ of rational and meromorphic functions on $T$
by field automorphisms.
The following lemma is equivalent to Proposition \ref{Baxterization} because of the Coxeter presentation of $W$ with
respect to the simple reflections $s_0,\ldots,s_n$.
\begin{lem}\label{reform}
Let $\pi: H(\underline{\kappa})\rightarrow\End_{\mathbb{C}}(V)$ be a representation.
There exist unique $C_w^V=C_w^V(\cdot;\underline{\kappa};\upsilon_0,\upsilon_n)\in\mathbb{C}(T)\otimes\End_{\mathbb{C}}(V)$
($w\in W$) satisfying the cocycle conditions
\begin{align}
C_e^V(\bm{t})&=\Id_V,\\
\label{cocycle} C_{ww^\prime}^V(\bm{t})&=C_w^V(\bm{t})C_{w^\prime}^V(w^{-1}\bm{t}),\qquad \forall\, w,w^\prime\in W
\end{align}
(with $e\in W$ the unit element) and satisfying
\[
C_{s_0}^V(\bm{t})=K_0^V(q^{\frac{1}{2}}/t_1),\qquad
C_{s_i}^V(\bm{t})=R_i^V(t_i/t_{i+1}),\qquad
C_{s_n}^V(\bm{t})=K_n^V(t_n)
\]
for $1\leq i<n$.
\end{lem}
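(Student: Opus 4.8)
The plan is to show that the assignment of the operators $K_0^V, R_i^V, K_n^V$ to the simple reflections $s_0, s_1, \ldots, s_n$ extends uniquely to a cocycle $w \mapsto C_w^V$ on $W$, and that this is equivalent to the braid and unitarity relations of Proposition \ref{Baxterization}. First I would observe that the cocycle condition \eqref{cocycle}, together with $C_e^V = \Id_V$, forces $C_{s_j}^V(w^{-1}\bm{t})$ to be invertible with $C_{s_j}^V(\bm{t})^{-1} = C_{s_j}^V(s_j\bm{t})$ (taking $w = w' = s_j$); this is exactly the unitarity statement, e.g. $K_0^V(q^{1/2}/t_1) K_0^V(t_1/q^{1/2}) = \Id_V$, which one reads off from $K_0^V(x)K_0^V(x^{-1}) = \Id_V$ in Proposition \ref{Baxterization} after the substitution $x = q^{1/2}/t_1$. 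So the generators-and-relations description of a cocycle on a group presented by generators and relations reduces the existence-and-uniqueness of $C_w^V$ to: (a) each $C_{s_j}^V$ is invertible (unitarity), and (b) for each defining braid relation of $W$, the corresponding product of $C$-factors, with arguments shifted appropriately by the $W$-action on $T$, agrees on both sides. Uniqueness is then immediate since the $s_j$ generate $W$ and any reduced word gives a well-defined value independent of the word precisely by (b).

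The core of the argument is therefore to translate each Coxeter braid relation of type $\widetilde{C}_n$ into one of the identities of Proposition \ref{Baxterization} via the dictionary $C_{s_0}^V(\bm{t}) = K_0^V(q^{1/2}/t_1)$, $C_{s_i}^V(\bm{t}) = R_i^V(t_i/t_{i+1})$, $C_{s_n}^V(\bm{t}) = K_n^V(t_n)$, keeping careful track of how the argument of each factor transforms under the $q$-dependent $W$-action $s_0\bm{t} = (qt_1^{-1}, t_2, \ldots)$, $s_i\bm{t} = (\ldots, t_{i+1}, t_i, \ldots)$, $s_n\bm{t} = (\ldots, -\text{no}, t_n^{-1})$. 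Concretely: the relation $s_i s_{i+1} s_i = s_{i+1} s_i s_{i+1}$ for $1 \le i < n-1$ unwinds, upon substituting $x = t_i/t_{i+1}$ and $y = t_{i+1}/t_{i+2}$ and tracking the coordinate transpositions, into the Yang--Baxter identity $R_i^V(x)R_{i+1}^V(xy)R_i^V(y) = R_{i+1}^V(y)R_i^V(xy)R_{i+1}^V(x)$; the relation $\sigma_0 \sigma_1 \sigma_0 \sigma_1 = \sigma_1 \sigma_0 \sigma_1 \sigma_0$ (i.e. $(s_0 s_1)^4 = e$, equivalently the four-term braid relation) unwinds into the left reflection equation $K_0^V(x)R_1^V(xy)K_0^V(y)R_1^V(y/x) = R_1^V(y/x)K_0^V(y)R_1^V(xy)K_0^V(x)$ after setting $x = q^{1/2}/t_1$, $y = q^{1/2}/t_2$ and using $s_0(q^{1/2}/t_1, \ldots) $-type shifts; similarly $\sigma_{n-1}\sigma_n\sigma_{n-1}\sigma_n = \sigma_n\sigma_{n-1}\sigma_n\sigma_{n-1}$ gives the right reflection equation; and the commuting relations $\sigma_i\sigma_j = \sigma_j\sigma_i$ for $|i-j|>1$ give the $[R_i^V(x), R_j^V(y)] = 0$, $[K_0^V(x), R_i^V(y)] = 0$ for $i \ge 2$, $[K_n^V(x), R_i^V(y)] = 0$ for $i < n-1$, and $[K_0^V(x), K_n^V(y)] = 0$ statements, these last ones being essentially trivial since the operators act on disjoint or commuting tensor legs and the relevant $W$-generators act on disjoint blocks of coordinates.

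I expect the main obstacle to be purely bookkeeping: verifying that the spectral-parameter arguments produced by composing the $W$-action shifts match exactly the arguments $x, y, xy, x/y, y/x$ appearing in Proposition \ref{Baxterization}, with no stray factors of $q^{1/2}$ or sign errors, in particular for the two reflection-equation relations where the $s_0$- and $s_n$-actions involve the inversions $t_1 \mapsto q t_1^{-1}$ and $t_n \mapsto t_n^{-1}$. For instance, on the $s_0 s_1 s_0 s_1$ side one must check that the successive arguments of the four factors $C_{s_0}^V, C_{s_1}^V, C_{s_0}^V, C_{s_1}^V$, each evaluated at the appropriate iterated inverse-image of $\bm{t}$, reproduce precisely $K_0^V(q^{1/2}/t_1), R_1^V(\cdot), K_0^V(q^{1/2}/t_2), R_1^V(\cdot)$ with the cross-ratios arranged as in \eqref{REleftnew}-type form; this is a finite computation but it is the step where one could most easily slip. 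Once the dictionary is pinned down, the remaining content is exactly the list of identities already asserted in Proposition \ref{Baxterization}, so there is nothing further to prove; and conversely, specializing the cocycle identities back by choosing $\bm{t}$ so that the relevant ratios equal prescribed $x, y$ recovers Proposition \ref{Baxterization} from the lemma, which is why the two statements are equivalent. Finally, I would note that the rationality of $C_w^V$ in $\bm{t}$ is automatic since each generator factor is rational and the $W$-action on $\mathbb{C}(T)$ is by field automorphisms, so compositions stay in $\mathbb{C}(T) \otimes \End_{\mathbb{C}}(V)$.
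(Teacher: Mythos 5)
Your argument establishes, correctly and with the bookkeeping done right, the \emph{equivalence} between Lemma \ref{reform} and Proposition \ref{Baxterization}: the dictionary $C_{s_0}^V(\bm{t})=K_0^V(q^{1/2}/t_1)$, $C_{s_i}^V(\bm{t})=R_i^V(t_i/t_{i+1})$, $C_{s_n}^V(\bm{t})=K_n^V(t_n)$ does convert the Coxeter relations of $W$, with arguments shifted by the $q$-dependent action, into precisely the unitarity, Yang--Baxter, reflection and commutation identities of Proposition \ref{Baxterization}, and uniqueness is indeed immediate from the generators-and-relations description of a cocycle. The problem is that, as a proof of the lemma, this is circular in the paper's architecture: the paper states explicitly that the lemma ``is equivalent to Proposition \ref{Baxterization}'' and that what follows (including the lemma) constitutes ``key steps of the proof of Proposition \ref{Baxterization}.'' The identities of Proposition \ref{Baxterization} are not available as an input at this point --- they are the \emph{output}, obtained from the lemma via exactly the unwinding you describe. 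Your proposal therefore proves the easy direction of an equivalence and takes the hard content for granted.

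What is actually missing is the mechanism that produces the cocycle (equivalently, the braid identities) in the first place. The paper gets it from Noumi's embedding $\iota: H(\underline{\kappa})\hookrightarrow\mathbb{C}(T)\#W$ (Theorem \ref{thmN}) and the induced representation $\nabla^V$ of $\mathbb{C}(T)\#W$ on $\mathbb{C}(T)\otimes V$ (Corollary \ref{corCrit}). Since $\nabla^V$ is an algebra homomorphism and $W$ sits inside $\mathbb{C}(T)\#W$ as a group, writing $\nabla^V(w)=C_w^V(\cdot)(w\otimes\Id_V)$ makes the cocycle identity \eqref{cocycle} automatic; the only computation required is the identification of $\nabla^V(s_i)$ with $C_{s_i}^V(\cdot)(s_i\otimes\Id_V)$ for the prescribed generators, which follows from $s_i=c_i^{-1}(\iota(T_i)-\kappa_i+c_i)$ and the explicit form of the $c_i$. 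No case-by-case verification of braid relations is needed. To close the gap in your approach you would instead have to verify the identities of Proposition \ref{Baxterization} directly from the quadratic and braid relations satisfied by the $\pi(T_j)$ --- the ``direct computation'' route of de Gier--Nichols that the paper is deliberately avoiding --- rather than citing them as already proved.
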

Define the algebra $\mathbb{C}(T)\#W$ of $q$-difference reflection operators with rational coefficients
as the vector space $\mathbb{C}(T)\otimes \mathbb{C}[W]$ with the multiplication rule
\[
(p\otimes w)(r\otimes w^\prime):=p(w\cdot r)\otimes ww^\prime,\qquad p,r\in\mathbb{C}(T),\,\,\, w,w^\prime\in W,
\]
where $w\cdot r$ is the result of the $q$-dependent action of $w\in W$ on $r\in \mathbb{C}(T)$. We simply write $pw$ for $p\otimes w\in\mathbb{C}(T)\#W$.
The key of the proof of Lemma \ref{reform} is to write the simple reflections $s_i \in\mathbb{C}(T)\otimes\mathbb{C}[W]$ ($0\leq i\leq n$)
in terms of Noumi's \cite{N} two-parameter family of realizations of the affine Hecke algebra $H(\underline{\kappa})$ as a subalgebra of $\mathbb{C}(T)\#W$,
which is given as follows.

Define $c_i=c_i(\cdot;\underline{\kappa};\upsilon_0,\upsilon_n)\in\mathbb{C}(T)$ for $0\leq i\leq n$
by the explicit formulas
\begin{equation*}
\begin{split}
c_0(\bm{t})&:=\kappa_0^{-1}\frac{(1-q^{\frac{1}{2}}\kappa_0\upsilon_0t_1^{-1})(1+q^{\frac{1}{2}}\kappa_0\upsilon_0^{-1}t_1^{-1})}
{(1-qt_1^{-2})},\\
c_i(\bm{t})&:=\kappa^{-1}\frac{(1-\kappa^2t_i/t_{i+1})}{(1-t_i/t_{i+1})},\\
c_n(\bm{t})&:=\kappa_n^{-1}\frac{(1-\kappa_n\upsilon_nt_n)(1+\kappa_n\upsilon_n^{-1}t_n)}{(1-t_n^2)}
\end{split}
\end{equation*}
for $1\leq i<n$. 
\begin{thm}[Noumi] \label{thmN}
Let $\upsilon_0,\upsilon_n\in\mathbb{C}^*$. There exists a unique unital algebra embedding 
\[
\iota=\iota_{\upsilon_0,\upsilon_n}^{\underline{\kappa}}:H(\underline{\kappa})\hookrightarrow \mathbb{C}(T)\#W
\]
satisfying 
\[
\iota(T_i)=\kappa_i+c_i(s_i-e),\qquad 0\leq i\leq n.
\]
\end{thm}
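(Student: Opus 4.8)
The plan is to verify that the assignment $T_i\mapsto\kappa_i+c_i(s_i-e)$ respects all defining relations of $H(\underline{\kappa})$, so that it extends to an algebra homomorphism, and then to establish injectivity by a PBW/dimension argument. First I would check that each $\iota(T_i)$ satisfies the quadratic relation $(\iota(T_i)-\kappa_i)(\iota(T_i)+\kappa_i^{-1})=0$ in $\mathbb{C}(T)\#W$. Writing $\iota(T_i)-\kappa_i=c_i(s_i-e)$, one computes $(c_i(s_i-e))(c_i(s_i-e))$ using the multiplication rule: this expands to $c_i(s_i\cdot c_i)s_i^2 - c_i(s_i\cdot c_i)s_i - c_i^2 s_i + c_i^2 e = c_i(s_i\cdot c_i)(e-s_i)-c_i^2(s_i-e)$. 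Using $s_i^2=e$ in $W$ and the identity $\iota(T_i)-\kappa_i=c_i(s_i-e)$, the quadratic relation becomes an identity of rational functions, namely $c_i+(s_i\cdot c_i)=\kappa_i-\kappa_i^{-1}$ for each $i$ (this is the standard "c-function duality" identity). I would verify this directly for $c_0$, $c_i$ ($1\leq i<n$), and $c_n$ from the explicit formulas, using the $q$-dependent $W$-action on $T$; e.g. for $c_i$ ($1\le i<n$) one has $s_i\cdot c_i(\bm t)=\kappa^{-1}(1-\kappa^2 t_{i+1}/t_i)/(1-t_{i+1}/t_i)$ and a short manipulation gives $c_i+s_i\cdot c_i=\kappa-\kappa^{-1}$, and similarly (using the factorized numerators) for the boundary cases.

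Next I would verify the braid relations. The key structural point is that Noumi's elements are a "$q$-deformed" version of the standard Demazure-type operators, and the braid relations for $\iota(T_0),\dots,\iota(T_n)$ follow from the corresponding relations in $W$ together with compatibility of the $c_i$ with the relevant rank-two parabolic subgroups. Concretely, for the type-$A$ relation $\iota(T_i)\iota(T_{i+1})\iota(T_i)=\iota(T_{i+1})\iota(T_i)\iota(T_{i+1})$ ($1\le i<n-1$), expanding both sides as elements of $\mathbb{C}(T)\otimes\mathbb{C}[W]$ and collecting coefficients of each $w$ in the parabolic subgroup $\langle s_i,s_{i+1}\rangle\cong S_3$, one reduces to a finite list of rational-function identities that hold because $c_i$ depends only on $t_i/t_{i+1}$ (so the relevant sub-computation is literally the rank-one $A_2$ case, which is classical). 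The relations $\sigma_0\sigma_1\sigma_0\sigma_1=\sigma_1\sigma_0\sigma_1\sigma_0$ and $\sigma_{n-1}\sigma_n\sigma_{n-1}\sigma_n=\sigma_n\sigma_{n-1}\sigma_n\sigma_{n-1}$ are handled the same way, now inside the rank-two parabolic subgroups of type $C_2$; these are the most laborious checks since the $c_j$ at the ends have the genuinely two-factor numerators involving $\upsilon_0,\upsilon_n$, but the computation is again a finite collection of identities in $\mathbb{C}(t_1)$ (resp. $\mathbb{C}(t_n)$) plus one spectator variable. The far-commutativity relations $\sigma_i\sigma_j=\sigma_j\sigma_i$ for $|i-j|>1$ are immediate since $c_i$ and $s_i$ involve disjoint sets of coordinates from $c_j,s_j$. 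This shows $\iota$ is a well-defined algebra homomorphism.

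For injectivity, I would argue as follows. Every element of $H(\underline{\kappa})$ can be written as a $\mathbb{C}$-linear combination of the Bernstein-Zelevinsky elements $Y^\lambda T_w$ ($\lambda\in\mathbb{Z}^n$, $w\in W_0$), and these form a $\mathbb{C}$-basis. Applying $\iota$, one computes $\iota(Y_i)$ and finds it acts as $\tau_i$ (the translation part of $W$) composed with multiplication by an explicit rational function — in particular $\iota(Y^\lambda)$ has $q$-difference part $\tau(\lambda)$, with pairwise distinct $\tau(\lambda)$ for distinct $\lambda$. Therefore, writing a hypothetical element of $\ker\iota$ in the basis $\{Y^\lambda T_w\}$ and applying $\iota$, the coefficients of the distinct group elements $\tau(\lambda)w'$ (for $\lambda$ ranging and $w'$ ranging over $W_0$) in $\mathbb{C}(T)\#W$ must all vanish; a triangularity/leading-term argument with respect to the Bruhat length of $w$ then forces all scalar coefficients to be zero. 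Hence $\ker\iota=0$.

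The main obstacle I expect is the verification of the two length-four braid relations at the boundaries, i.e. the $C_2$-type relations involving $\sigma_0,\sigma_1$ and $\sigma_{n-1},\sigma_n$: there the $c$-functions have the two-binomial numerators carrying $\upsilon_0$ and $\upsilon_n$, and one must track how the dihedral group of order $8$ acts on $t_1$ (resp. $t_n$) while keeping a spectator coordinate $t_2$ (resp. $t_{n-1}$) fixed. The cleanest route is probably to note that everything takes place in the two-variable sub-problem and to reduce to Noumi's rank-one $C^\vee C_1$ (Askey-Wilson/DAHA) identity, which is exactly where the specific factorization of $c_0$ and $c_n$ into $(1-\kappa_0\upsilon_0\,\cdot)(1+\kappa_0\upsilon_0^{-1}\,\cdot)$ was engineered to make the relation hold; the rest is bookkeeping.
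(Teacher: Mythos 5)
The paper itself offers no proof of this theorem: it is quoted from Noumi \cite{N}, so there is no internal argument to compare against. Your proposal is the standard direct-verification proof, and its architecture is sound: (a) check the quadratic and braid relations for the operators $\kappa_i+c_i(s_i-e)$, reducing each braid relation to a finite list of rational-function identities localized on a rank $\leq 2$ parabolic; (b) prove injectivity by a leading-term (Bruhat-triangularity) argument. However, your central identity is mis-stated. Expanding $\bigl(c_i(s_i-e)\bigr)^2=-c_i\bigl(c_i+(s_i\cdot c_i)\bigr)(s_i-e)$ and comparing with $-(\kappa_i+\kappa_i^{-1})\,c_i(s_i-e)$, the quadratic relation $(\iota(T_i)-\kappa_i)(\iota(T_i)+\kappa_i^{-1})=0$ is equivalent to $c_i+s_i\cdot c_i=\kappa_i+\kappa_i^{-1}$, \emph{not} $\kappa_i-\kappa_i^{-1}$; this is also what the explicit formulas give (for $1\leq i<n$, with $z=t_i/t_{i+1}$, one finds $\kappa^{-1}\frac{1-\kappa^2z}{1-z}+\kappa^{-1}\frac{z-\kappa^2}{z-1}=\kappa+\kappa^{-1}$, and likewise for $c_0,c_n$ using the two-factor numerators). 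You carry the same sign error through both the statement of what is needed and the claimed verification, so the slip is self-consistent and fixable, but as written the identity you assert is false.

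A second point to repair: the proposed reduction of the length-four relations to ``Noumi's rank-one $C^\vee C_1$ identity'' does not make sense as stated, because in the $C^\vee C_1$ double affine Hecke algebra there is \emph{no} braid relation between $T_0$ and $T_1$ (the underlying Coxeter group is infinite dihedral). The relation $\iota(T_0)\iota(T_1)\iota(T_0)\iota(T_1)=\iota(T_1)\iota(T_0)\iota(T_1)\iota(T_0)$ is genuinely a two-variable computation for the order-$8$ dihedral group acting on $(t_1,t_2)$ (and similarly at the other end); it is finite and routine but must be done as such, not inherited from a rank-one statement. Finally, your injectivity step can be streamlined so as to avoid any dependence on the Bernstein-Zelevinsky basis theorem: expanding $\iota(T_w)$ along a reduced expression of $w\in W$ gives $\iota(T_w)=\bigl(\prod c\bigr)\,w+\sum_{u<w}f_u\,u$ with nonvanishing leading coefficient, and since coefficients of distinct $w\in W$ in $\mathbb{C}(T)\#W$ are independent by definition, a maximal-length argument shows the $\iota(T_w)$ are linearly independent over $\mathbb{C}$; as the $T_w$ span $H(\underline{\kappa})$, this yields injectivity (and the Iwahori-Matsumoto basis property) in one stroke.
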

\begin{rema}
The Noumi-Sahi extension of Cherednik's double affine Hecke algebra is the subalgebra of $\mathbb{C}(T)\#W$ generated by
$\iota_{\upsilon_0,\upsilon_n}^{\underline{\kappa}}(H(\underline{\kappa}))$ and by the algebra $\mathbb{C}[T]$ of regular functions on $T$.
\end{rema}
The following consequence should be compared to \cite[Prop. 3.5]{St1}.
\begin{cor}\label{corCrit}
Let $\pi: H(\underline{\kappa})\rightarrow \End_{\mathbb{C}}(V)$ be a representation.
There exists a unique algebra homomorphism 
\[
\nabla^V: \mathbb{C}(T)\#W\rightarrow \mathbb{C}(T)\#W\otimes\End_{\mathbb{C}}(V)
\]
satisfying $\nabla^V(p)=p\otimes\Id_V$ for $p\in\mathbb{C}(T)$ and
\[
\nabla^V(\iota(T_i))=s_i\otimes \pi(T_i)+(c_i-\kappa_i)(s_i-e)\otimes \Id_V,\qquad 0\leq i\leq n.
\]
\end{cor}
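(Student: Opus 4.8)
The plan is to build $\nabla^V$ directly on the free algebra $\mathbb{C}(T)*\mathcal{B}$ (or equivalently on generators and relations) and then check that it descends through the defining ideal of $H(\underline{\kappa})$ inside $\mathbb{C}(T)\#W$. More precisely, first I would observe that $\mathbb{C}(T)\#W$ is generated as an algebra by $\mathbb{C}(T)$ and the simple reflections $s_0,\ldots,s_n$, subject to the rules $s_i^2=e$, the braid relations of $W$ of type $\widetilde{C}_n$, and the commutation relation $s_i\, r = (s_i\cdot r)\, s_i$ for $r\in\mathbb{C}(T)$. So to define an algebra map out of $\mathbb{C}(T)\#W$ it suffices to prescribe images of $\mathbb{C}(T)$ and of the $s_i$ in the target $\mathbb{C}(T)\#W\otimes\End_{\mathbb{C}}(V)$ that satisfy these same relations. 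The prescribed image $\nabla^V(p)=p\otimes\Id_V$ is forced; the subtle point is that we are given the images of the elements $\iota(T_i)=\kappa_i+c_i(s_i-e)$, not of the $s_i$ directly, so I would first solve for the would-be image of $s_i$. Since $c_i\in\mathbb{C}(T)^\times$, inverting the relation gives that $\nabla^V(s_i)$ must equal $e\otimes\Id_V + c_i^{-1}\bigl(\nabla^V(\iota(T_i))-\kappa_i\otimes\Id_V\bigr)$, and substituting the prescribed formula for $\nabla^V(\iota(T_i))$ yields the explicit candidate
\[
\nabla^V(s_i)=s_i\otimes\Id_V+c_i^{-1}(s_i-e)\otimes\bigl(\pi(T_i)-\kappa_i\bigr),\qquad 0\leq i\leq n,
\]
together with $\nabla^V(p)=p\otimes\Id_V$.

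Next I would verify that these candidate images satisfy the three families of relations defining $\mathbb{C}(T)\#W$. The commutation relations $\nabla^V(s_i)\,(r\otimes\Id_V)=\bigl((s_i\cdot r)\otimes\Id_V\bigr)\,\nabla^V(s_i)$ for $r\in\mathbb{C}(T)$ are immediate from the $\mathbb{C}(T)$-bimodule structure and the fact that $\pi(T_i)-\kappa_i$ commutes with the scalar $\mathbb{C}(T)$-part. For the remaining relations I would use the stated Theorem of Noumi (Theorem \ref{thmN}): since $\iota$ is an algebra \emph{embedding}, the elements $\iota(T_i)=\kappa_i+c_i(s_i-e)$ satisfy exactly the affine Hecke relations of $H(\underline{\kappa})$ — the quadratic relations $(\iota(T_j)-\kappa_j)(\iota(T_j)+\kappa_j^{-1})=0$ and the braid relations of type $\widetilde{C}_n$. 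The point is that our candidate $\nabla^V(\iota(T_i)):=s_i\otimes\pi(T_i)+(c_i-\kappa_i)(s_i-e)\otimes\Id_V$ is, formally, obtained from $\iota(T_i)$ by the substitution $s_i\mapsto s_i\otimes\Id_V$ in the $\mathbb{C}(T)$-part together with the \emph{universal} replacement of the ``Hecke variable'' by $\pi(T_i)$; concretely one checks $\nabla^V(\iota(T_i))=s_i\otimes\pi(T_i)+(c_i-\kappa_i)(s_i-e)\otimes\Id_V = (s_i\otimes\Id_V)(e\otimes\pi(T_i)) + (c_i-\kappa_i)(s_i-e)\otimes\Id_V$, which one rewrites as $\kappa_i\otimes\Id_V + c_i\bigl(\nabla^V(s_i)-e\otimes\Id_V\bigr)$ using the candidate formula for $\nabla^V(s_i)$ above — so the two prescriptions are consistent, and $\nabla^V$ is an algebra map on $\mathbb{C}(T)\#W$ as soon as the $s_i$-images satisfy $s_i^2=e$ and the braid relations.

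The heart of the verification — and the step I expect to be the main obstacle — is checking that $\nabla^V(s_i)^2=e\otimes\Id_V$ and that the $\nabla^V(s_i)$ satisfy the braid relations of $W$ of type $\widetilde{C}_n$. For $s_i^2$: expanding $\bigl(s_i\otimes\Id_V+c_i^{-1}(s_i-e)\otimes(\pi(T_i)-\kappa_i)\bigr)^2$ and using $s_i\cdot c_i^{-1}=\tilde c_i$ for the appropriate transformed rational function, one collects terms and needs the identity $c_i+(s_i\cdot c_i)=\kappa_i+\kappa_i^{-1}$ — a direct check from the explicit formulas for the $c_i$ — together with the quadratic relation $\pi(T_i)^2=(\kappa_i-\kappa_i^{-1})\pi(T_i)+\Id_V$ in $\End_{\mathbb C}(V)$, which holds because $\pi$ is a representation of $H(\underline{\kappa})$; these combine to kill the off-diagonal terms and leave $e\otimes\Id_V$. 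For the braid relations, rather than a brute expansion I would argue indirectly: the map $\iota$ identifies $H(\underline{\kappa})$ with a subalgebra of $\mathbb{C}(T)\#W$ on which the braid relations hold automatically, and the braid relations among the $\nabla^V(s_i)$ are equivalent (after multiplying by the invertible scalars $c_i$ and translating) to the braid relations among the $\nabla^V(\iota(T_i))$; the latter can in turn be deduced from the braid relations satisfied by $s_i\otimes\Id_V$ in $\mathbb{C}(T)\#W\otimes\End_{\mathbb{C}}(V)$ and the braid relations satisfied by $\pi(T_i)$ in $\End_{\mathbb{C}}(V)$, by a ``separation of variables'' argument exploiting that the $\mathbb{C}(T)\#W$-tensor-factor and the $\End_{\mathbb{C}}(V)$-tensor-factor interact only through the shared $s_i$-part. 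Concretely this reduces to the observation that the assignment $T_i\mapsto s_i\otimes\pi(T_i)+(c_i-\kappa_i)(s_i-e)\otimes\Id_V$ is precisely the image of $T_i$ under $\iota\otimes\mathrm{id}$ applied after the coproduct-like map $T_i\mapsto T_i\otimes T_i$ in a suitable sense — this is where the technical bookkeeping lives — but since the statement only asserts existence and uniqueness of the homomorphism with the prescribed values, and uniqueness is automatic once we know $\mathbb{C}(T)$ and the $\iota(T_i)$ generate $\mathbb{C}(T)\#W$, the remaining work is exactly the relation-checking outlined above.
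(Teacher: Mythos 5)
Your strategy --- present $\mathbb{C}(T)\#W$ by generators and relations, solve for the forced image of $s_i$, and verify the quadratic, braid and cross relations directly --- is genuinely different from the paper's argument and could in principle be carried out, but as written it has two concrete defects. The first is that your candidate formula for $\nabla^V(s_i)$ is wrong. Inverting $\iota(T_i)=\kappa_i+c_i(s_i-e)$ gives $s_i=e+c_i^{-1}(\iota(T_i)-\kappa_i)$, and substituting the prescribed value of $\nabla^V(\iota(T_i))$ yields
\[
\nabla^V(s_i)=c_i^{-1}s_i\otimes \pi(T_i)+(1-\kappa_ic_i^{-1})\,s_i\otimes\Id_V
=s_i\otimes\Id_V+c_i^{-1}s_i\otimes\bigl(\pi(T_i)-\kappa_i\bigr),
\]
i.e.\ the correction term carries $c_i^{-1}s_i$, not $c_i^{-1}(s_i-e)$; your expression differs from this by $-c_i^{-1}e\otimes(\pi(T_i)-\kappa_i)$. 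This is not a harmless slip for your program: writing $\tilde c_i=s_i\cdot c_i$ and using $c_i+\tilde c_i=\kappa_i+\kappa_i^{-1}$ together with the quadratic Hecke relation, the square of your expression comes out as $e\otimes\Id_V+c_i^{-2}(\kappa_i+\kappa_i^{-1})(s_i-e)\otimes(\pi(T_i)-\kappa_i)$, which is not $e\otimes\Id_V$, so the very first relation you set out to check fails. (With the corrected formula the computation does close up exactly from the two ingredients you name.)

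The second, more serious, issue is that the braid relations --- which you yourself identify as the heart of the matter --- are never established. The claim that they follow from the braid relations of $s_i\otimes\Id_V$ and of $\pi(T_i)$ separately by a ``separation of variables'' or a ``coproduct-like map $T_i\mapsto T_i\otimes T_i$'' is precisely the nontrivial assertion: the two tensor factors are coupled through the rational functions $c_i$, the proposed coproduct is not a homomorphism of anything you have defined, and in type $\widetilde{C}_n$ the relations include the length-four relations $\sigma_0\sigma_1\sigma_0\sigma_1=\sigma_1\sigma_0\sigma_1\sigma_0$. Worse, the braid relations among the $\nabla^V(s_i)=C_{s_i}^V(\cdot)(s_i\otimes\Id_V)$ are exactly equivalent to the spectral-parameter Yang--Baxter and reflection equations of Proposition \ref{Baxterization}, so verifying them by hand amounts to redoing the direct computation (as in de Gier--Nichols) that the Corollary is designed to circumvent. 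The paper avoids all of this by realizing $\nabla^V$ as the representation map of the induced module $\mathbb{C}(T)\#W\otimes_{\iota(H(\underline{\kappa}))}V$, identified with $\mathbb{C}(T)\otimes_{\mathbb{C}}V$ via the factorization $\mathbb{C}(T)\#W=\mathbb{C}(T)\cdot\iota(H(\underline{\kappa}))$ coming from Theorem \ref{thmN}; the algebra-homomorphism property is then automatic, and the only computation left is matching the explicit formula on generators using the cross relation $T_ip=(s_i\cdot p)T_i+(\kappa_i-c_i)(p-s_i\cdot p)$. I would either adopt that route, or correct the formula for $\nabla^V(s_i)$ and supply an honest proof of the braid relations.
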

\begin{proof}
Consider the induced $\mathbb{C}(T)\#W$-module
\[\textup{Ind}_{H(\underline{\kappa})}^{\mathbb{C}(T)\#W}\bigl(V\bigr)=\mathbb{C}(T)\#W\otimes_{H(\underline{\kappa})}V,
\]
where we identify $H(\underline{\kappa})$ with the subalgebra $\iota(H(\underline{\kappa}))$ of $\mathbb{C}(T)\#W$.
Using the linear isomorphism 
\[\mathbb{C}(T)\otimes \iota(H(\underline{\kappa}))\overset{\sim}{\longrightarrow}\mathbb{C}(T)\#W
\]
defined by the multiplication map, the representation map $\nabla$
of $\textup{Ind}_{H(\underline{\kappa})}^{\mathbb{C}(T)\#W}\bigl(V\bigr)$ becomes an algebra map
\[
\nabla: \mathbb{C}(T)\#W\rightarrow \End_{\mathbb{C}}(\mathbb{C}(T)\otimes_{\mathbb{C}}V).
\]
View $\mathbb{C}(T)\#W\otimes_{\mathbb{C}}\pi(H(\underline{\kappa}))$ as a subalgebra of $\End_{\mathbb{C}}(\mathbb{C}(T)\otimes_{\mathbb{C}}V)$,
with $\mathbb{C}(T)\#W$ acting on $\mathbb{C}(T)$ as $q$-difference reflection operators. It then suffices to show that  $\nabla(p)=\nabla^V(p)$ and $\nabla(T_i)=\nabla^V(T_i)$ for $p\in\mathbb{C}(T)$ and $0\leq i\leq n$. This follows by a direct computation
using the commutation relations 
\[
T_ip=(s_i\cdot p)T_i+(\kappa_i-c_i)(p-s_i\cdot p),\qquad 0\leq i\leq n,\,\,\, p\in\mathbb{C}(T)
\]
in $\mathbb{C}(T)\#W$.
\end{proof}
Observe that
\begin{equation*}
\begin{split}
\nabla^V(s_i)&=\nabla^V(c_i^{-1}(T_i-\kappa_i+c_i))\\
&=c_i^{-1}s_i\otimes\pi(T_i)+c_i^{-1}(c_i-\kappa_i)(s_i-e)\otimes\textup{Id}_V+c_i^{-1}(c_i-\kappa_i)e\otimes\textup{Id}_V\\
&=(c_i^{-1}e\otimes\pi(T_i)+c_i^{-1}(c_i-\kappa_i)e\otimes\textup{Id}_V)(s_i\otimes\textup{Id}_V)\\
&=C_{s_i}^V(\cdot)(s_i\otimes\Id_V)
\end{split}
\end{equation*}
for $i=0,\ldots,n$, where the last equality follows from the fact that
\begin{equation*}
\begin{split}
\frac{\pi(T_0)+c_0(\bm{t})-\kappa_0}{c_0(\bm{t})}&=K_0^V(q^{\frac{1}{2}}t_1^{-1})=C_{s_0}^V(\bm{t}),\\
\frac{\pi(T_i)+c_i(\bm{t})-\kappa}{c_i(\bm{t})}&=R_i^V(t_i/t_{i+1})=C_{s_i}^V(\bm{t}),\qquad 1\leq i<n,\\
\frac{\pi(T_n)+c_n(\bm{t})-\kappa_n}{c_n(\bm{t})}&=K_n^V(t_n)=C_{s_n}^V(\bm{t}),
\end{split}
\end{equation*}
which in turn can be checked by a direct computation.
Combined with Corollary \ref{corCrit} this proves Lemma \ref{reform}. 
Finally note that 
\[
\nabla^V(w)=C_w^V(\cdot)(w\otimes\Id_V),\qquad w\in W.
\]
\subsection{The reflection quantum KZ equations} \label{ReflqKZeqns}

Let $\mathcal{M}(T)$ be the field of meromorphic functions on $T=\bigl(\mathbb{C}^*\bigr)^n$. 
\begin{defi}
Let $V$ be a $H(\underline{\kappa})$-module and $\upsilon_0,\upsilon_n\in\mathbb{C}^*$. We say that $f\in\mathcal{M}(T)\otimes V$ is a solution of the associated
reflection quantum Knizhnik-Zamolodchikov (KZ) equations if
\[
\nabla^V(\tau(\lambda))f=f\qquad \forall\,\, \lambda\in\mathbb{Z}^n.
\] 
We write $\textup{Sol}_{KZ}(V)=\textup{Sol}_{KZ}(V;\underline{\kappa};\upsilon_0,\upsilon_n)\subseteq \mathcal{M}(T)\otimes V$ for the associated space of solutions of the reflection quantum KZ equations.
\end{defi}
Note that for $\lambda\in\mathbb{Z}^n$,
\[
\bigl(\nabla^V(\tau(\lambda))f\bigr)(\bm{t})=C_{\tau(\lambda)}(\bm{t})f(q^{-\lambda}\bm{t})
\]
with transport operators $C_{\tau(\lambda)}\in \mathbb{C}(T)\otimes\textup{End}_{\mathbb{C}}(V)$. 
Consequently the reflection quantum KZ equations form a consistent system of first order, linear $q$-difference equations. Note also that $\textup{Sol}_{KZ}(V)$ is a left $W_0$-module with the action given
by
\[
\bigl(w\cdot f\bigr)(\bm{t}):=\bigl(\nabla^V(w)f\bigr)(\bm{t})=C_w(\bm{t})f(w^{-1}\bm{t}),\qquad w\in W_0.
\]

The reflection quantum KZ equations are equivalent to
\[
C_{\tau_i}^V(\bm{t})f(q^{-\epsilon_i}\bm{t})=f(\bm{t}) \qquad \forall\, i=1,\ldots,n,
\]
where $\{\epsilon_i\}_{i=1}^n$ is the standard orthonormal basis of $\mathbb{R}^n$.
Using the expression \eqref{taui} of $\tau_i\in W$ as product of simple reflections and the cocycle property \eqref{cocycle}, $C_{\tau_i}^V$ takes on the explicit form
\begin{equation}\label{explicitC}
\begin{split}
C_{\tau_i}^V&(\bm{t})=R_{i-1}^V(t_{i-1}/t_i)R_{i-2}^V(t_{i-2}/t_i)\cdots R_1^V(t_1/t_i)K_0^V(q^{\frac{1}{2}}/t_i)\\
&\times R_1^V(q/t_1t_i)R_2^V(q/t_2t_i)\cdots R_{i-1}^V(q/t_{i-1}t_i)R_i^V(q/t_it_{i+1})\cdots R_{n-1}^V(q/t_it_n)\\
&\times K_n^V(q/t_i)R_{n-1}^V(qt_n/t_i)\cdots R_i^V(qt_{i+1}/t_i).
\end{split}
\end{equation}

\begin{rema}
{\bf (i)} In Section \ref{Solsection} we construct for principal series representations 
solutions of the associated reflection quantum KZ equations in terms of nonsymmetric Koornwinder polynomials.
As a special case this construction gives rise to solutions for the spin representations 
$\bigl(\rho^{\underline{\kappa}}_{\psi_0,\psi_n},\bigl(\mathbb{C}^2\bigr)^{\otimes n}\bigr)$ of $H(\underline{\kappa})$
in view of Proposition \ref{rhopiequivalence}. 
Alternatively, for special classes of parameters solutions of the reflection quantum KZ equations for spin representations can be constructed using a vertex operator approach (see \cite{JKKKM,JKKMW,We} for such a treatment involving diagonal $K$-matrices, and \cite{BKojima} for a recent extension to triangular $K$-matrices) or by a generalized Bethe ansatz method \cite{RSV} (for diagonal $K$-matrices).\\
{\bf (ii)} Write $\textup{Sol}_{KZ}^{\underline{\kappa}}(V)^{W_0}$ for the subspace of $W_0$-invariant solutions of the reflection quantum KZ equations.
Then $f\in\textup{Sol}_{KZ}^{\underline{\kappa}}(V)^{W_0}$ if and only if
\begin{equation*}
\begin{split}
K_0^V(q^{\frac{1}{2}}/t_1)f(s_0\bm{t})&=f(\bm{t}),\\
R_i^V(t_i/t_{i+1})f(s_i\bm{t})&=f(\bm{t}),\qquad 1\leq i<n,\\
K_n^V(t_n)f(s_n\bm{t})&=f(\bm{t}).
\end{split}
\end{equation*}
It is in this form that the reflection quantum KZ equations often appear in the literature on spin chains, see, e.g., 
\cite{dFZJ, dGPS, dGP2,PoThesis, ZJ2}.
\end{rema}

\subsection{The XXZ spin chain} \label{spsection}

One of the main aims in the study of spin chain models
is to find the spectrum of the quantum Hamiltonian, a distinguished operator acting on the state space in which the combined states of $n$ individual spins reside, and to find a complete set of eigenfunctions. 
The method of commuting transfer operators, pioneered by Baxter (see \cite{Ba} and references therein) and elaborated upon principally by the Faddeev school (e.g. \cite{Fa,Sk1982,KBI}), produces a generating function of quantum Hamiltonians from the basic data of the models ($R$- and $K$-matrices). 
In many cases the basic data arise from the representation theory of quantum groups or Hecke algebras.

We consider now briefly the construction of the quantum integrable model associated to the basic data $r(x)$, $k(x)$ and $\overline{k}(x)$, see Corollary \ref{rk} (in this case the basic data thus are obtained by a Baxterization procedure from the spin representation $\hat{\rho}^{\underline{\kappa}}_{\psi_0,\psi_n}$).
The associated state space is the representation space $(\C^2)^{\otimes n}$ of the spin representation.
To write down the associated transfer operators we first consider a larger space, which for the pertinent case is $\underset{(0)}{\C^2}  \otimes (\C^2)^{\otimes n} $, where the additional copy of $\C^2$ numbered 0 is referred to as the auxiliary space. 
Given a spectral parameter $x \in \C^*$ and an $n$-tuple of so-called inhomogeneities $\bm t = (t_1,\ldots,t_n) \in T=(\C^*)^n$, one constructs from the matrices $\check r(x) = r(x) \circ P$ and $k(x)$ the \emph{monodromy operator}
\begin{align*}
U_0(x;\bm t) &= \check r_{01}(x t_1^{-1}) \check r_{12}(x t_2^{-1}) \cdots \check r_{n\!-\!1 \, n}(x t_n^{-1}) k_n(x)  \check r_{n\!-\!1 \, n}(x t_n) \cdots \check r_{12}(x t_2) \check r_{01}(x t_1)  \\
&= r_{01}(x t_1^{-1}) \cdots r_{0n}(x t_n^{-1}) k_0(x) r_{n0}(x t_n) \cdots r_{10}(x t_1) ,
\end{align*}
acting on $\underset{(0)}{\C^2}\otimes\bigl(\mathbb{C}^2\bigr)^n=
\underset{(0)}{\C^2} \otimes \underset{(1)}{\C^2} \otimes \cdots \otimes \underset{(n)}{\C^2}$ (with the sublabels indicating the numbering of the tensor legs on which the
operator is acting). 

Fix a square root $\kappa^{\frac{1}{2}}$ of $\kappa$ and set
\[
\theta:=\left(\begin{matrix} \kappa^{-\frac{1}{2}} & 0\\ 0 & \kappa^{\frac{1}{2}}\end{matrix}\right).
\]
The \emph{transfer matrix} is the endomorphism of $\bigl(\mathbb{C}^2\bigr)^n$ given by
\[ T(x;\bm t) = \Tr_0 \bigl(\theta_0 \, \bar k_0(\kappa^2 x) \theta_0 U_0(x;\bm t)\bigr),
\]
where $\Tr_0$ stands for the partial trace over the auxiliary space.
The $\kappa^2$-shift in the argument of $\bar k$ is absent from the standard approach in \cite{Sk1988,MN,Ne}. 
It appears here to allow us to relate the transfer operators to the transport operators $C_{\tau_i}(\bm t)$, see Proposition \ref{transferoperatorandcocycles}.
The following result is the well-known statement that transfer operators are generating functions of commuting quantum Hamiltonians.
\begin{thm} \label{commutingtransferoperators}
We have
\begin{equation} \label{eqn:commutingtransferoperators} [T(x;\bm t),T(y;\bm t)]=0. \end{equation}
as endomorphisms of $\bigl(\mathbb{C}^2\bigr)^{\otimes n}$ depending rationally on $\mathbf{t}\in T$.
\end{thm}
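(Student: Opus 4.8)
\emph{Strategy.} This is the classical transfer-matrix commutativity result of Sklyanin, adapted to the $C^\vee C$-type boundary data at hand, and the proof uses only the properties recorded in Corollary \ref{rk}: the Yang--Baxter equation \eqref{YBEnew} together with unitarity $r(x)r_{21}(x^{-1})=\Id$ for the bulk matrix, the right reflection equation \eqref{RErightnew} for $k(x)$, and the left reflection equation \eqref{REleftnew} for $\bar k(x)$; in addition one needs a crossing identity for $r(x)$ in which the gauge matrix $\theta$ and the shift $x\mapsto\kappa^2x$ in the argument of $\bar k$ play their role.

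\emph{Step 1: the double-row monodromy satisfies a reflection equation.} Introducing a second auxiliary space numbered $0'$ and viewing $U_0(x;\bm{t})$ and $U_{0'}(y;\bm{t})$ as operators on $\underset{(0)}{\C^2}\otimes\underset{(0')}{\C^2}\otimes(\C^2)^{\otimes n}$, the plan is to prove
\[
r_{00'}(x/y)\,U_0(x;\bm{t})\,r_{0'0}(xy)\,U_{0'}(y;\bm{t})=U_{0'}(y;\bm{t})\,r_{00'}(xy)\,U_0(x;\bm{t})\,r_{0'0}(x/y)
\]
by the standard ``railroad'' induction: writing $U_0(x;\bm{t})=r_{01}(xt_1^{-1})\cdots r_{0n}(xt_n^{-1})\,k_0(x)\,r_{n0}(xt_n)\cdots r_{10}(xt_1)$, one repeatedly uses the Yang--Baxter equation \eqref{YBEnew} in the form $r_{00'}(x/y)\,r_{0i}(xt_i^{-1})\,r_{0'i}(yt_i^{-1})=r_{0'i}(yt_i^{-1})\,r_{0i}(xt_i^{-1})\,r_{00'}(x/y)$ to transport the $r_{00'}$-factor from the outer left to the middle of the expression, applies \eqref{RErightnew} (with $1\mapsto 0$, $2\mapsto 0'$) where the two boundary factors $k_0(x)$ and $k_{0'}(y)$ meet, and unwinds symmetrically along the return leg; unitarity provides $r_{00'}(x/y)^{-1}=r_{0'0}(y/x)$, which keeps the bookkeeping consistent.

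\emph{Step 2: a dual reflection equation for the left boundary, and the trace argument.} Set $\widehat K_0(x):=\theta_0\,\bar k_0(\kappa^2x)\,\theta_0$, so that $T(x;\bm{t})=\Tr_0\bigl(\widehat K_0(x)\,U_0(x;\bm{t})\bigr)$. Using the explicit form of $r(x)$ one verifies a crossing relation for $r$ (expressing a partial transpose of $r$ at a reflected argument, conjugated by $\theta$, in terms of $r$ itself); feeding it into the left reflection equation \eqref{REleftnew} recasts that identity as a reflection equation for $\widehat K$ of precisely the shape required below, the $\kappa^2$-shift and the two factors of $\theta$ in the definition of $T(x;\bm{t})$ being exactly what make this work. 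One then writes
\[
T(x;\bm{t})\,T(y;\bm{t})=\Tr_{0,0'}\bigl(\widehat K_0(x)\,U_0(x;\bm{t})\,\widehat K_{0'}(y)\,U_{0'}(y;\bm{t})\bigr),
\]
moves $\widehat K_{0'}(y)$ leftward past $U_0(x;\bm{t})$ (disjoint tensor legs), inserts a resolution of the identity built from the unitarity of $r$, transports it across the product using the reflection equation of Step 1 for $U$ and the dual reflection equation for $\widehat K$, and applies cyclicity of the double trace $\Tr_{0,0'}$; the inserted $r$-factors cancel again by unitarity and what remains is $T(y;\bm{t})\,T(x;\bm{t})$. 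Rationality in $\bm{t}\in T$ is manifest throughout.

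\emph{Main obstacle.} The formal parts (Step 1 and the trace manipulation) are the universal Sklyanin machinery and are routine. The real work is the crossing identity of Step 2, i.e.\ checking that the gauge $\theta=\textup{diag}(\kappa^{-1/2},\kappa^{1/2})$ together with the argument shift $x\mapsto\kappa^2x$ turns the left reflection equation \eqref{REleftnew} into the dual reflection equation compatible with Step 1; this is a finite but delicate $2\times 2$ and $4\times 4$ matrix computation with the explicit $r$, $k$ and $\bar k$ of Corollary \ref{rk}. (Alternatively, once Proposition \ref{transferoperatorandcocycles} is available the commutativity can be deduced from the consistency of the reflection quantum KZ equations, that is, from the cocycle property \eqref{cocycle} and $\tau_i\tau_j=\tau_j\tau_i$, bypassing Step 2 entirely.)
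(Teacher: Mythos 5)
Your main argument (Steps 1--2) is exactly the route the paper takes: Sklyanin's double-row construction, with the Yang--Baxter equation \eqref{YBEnew} and the right reflection equation \eqref{RErightnew} giving the reflection equation for the monodromy $U_0$, and with PT-symmetry plus a crossing-unitarity identity for $r$ converting the left reflection equation \eqref{REleftnew} into the dual reflection equation for $\theta_0\,\bar k_0(\kappa^2x)\,\theta_0$ needed in the trace manipulation --- the paper records precisely this reformulation in \eqref{reformRE} and then defers to Mezincescu--Nepomechie, so your identification of the $\theta$-conjugation and the $\kappa^2$-shift as the only genuinely new computation is on target. One caveat: the parenthetical alternative via Proposition \ref{transferoperatorandcocycles} does not suffice as stated, because the cocycle property at $q=1$ only yields commutativity at the special points $x\in\{t_i^{\pm1}\}$, $y\in\{t_j^{\pm1}\}$, and these $O(n^2)$ interpolation points are too few to force the vanishing of $[T(x;\bm t),T(y;\bm t)]$ as a rational function of $(x,y)$ whose numerator has bidegree growing like $(2n+4,2n+4)$.
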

\begin{proof}
The proof is a straightforward modification of Sklyanin's \cite{Sk1988} proof for $P$- and $T$-symmetric $R$-matrices; note that the presence of the inhomogeneities does not affect the argument at all.
We remark that $r$ satisfies PT-symmetry, $r_{21}(x) = r^T_{12}(x)$, and crossing unitarity:
\[ \theta^{-1}_1 \theta^{-1}_2  r^{T_1}_{12}(\kappa^{-4}x^{-1}) \theta_1 \theta_2 r^{T_2}_{12}(x) = \Phi(x) \Id_{\C^2\otimes \C^2}, \qquad \Phi(x) =  \frac{(1-x)(1-\kappa^4 x)}{(1-\kappa^2 x)^2}\]
where $T_i$ denotes the partial transpose with respect to the tensor leg labelled $i$. 
Crucially, note that \eqref{REleftnew}, the reflection equation for $\bar k$, can be re-written as
\begin{equation}\label{reformRE} \begin{gathered}
r_{12}(y/x) \bar k^{T_1}_1(\kappa^2 x) r_{21}((\kappa^{4}xy)^{-1}) \bar k^{T_2}_2(\kappa^2 y) = \hspace{30mm} \\
\hspace{30mm}  = \bar k^{T_2}_2(\kappa^2 y) r_{12}((\kappa^{4}xy)^{-1}) \bar k^{T_1}_1(\kappa^2 x) r_{21}(y/x), \end{gathered} \end{equation}
which is equivalent to \cite[Eqn. (10)]{MN}. 
To obtain this reformulation of \eqref{REleftnew}, we have used that $r^T_{12}(x) = r_{12}(x^{-1})|_{\kappa \to \kappa^{-1}}$ and $\bar k(x)$ does not depend on $\kappa$, and that $\bar k^T(x) = \bar k(x)|_{\psi_0 \to \psi_0^{-1}}$ and $r(x)$ does not depend on $\psi_0$.
Using \eqref{reformRE} the proof is essentially the one implicitly present in \cite{MN}.
\end{proof}

The transport operators $C_{\tau_i}(\bm t)$ of the reflection quantum KZ equation associated to the spin representation $\widehat{\rho}^{\underline{\kappa}}_{\psi_0,\psi_n}$
are linear operators on $\bigl(\mathbb{C}^2\bigr)^n$ depending rationally on $\bm t\in T$. They are explicitly expressed in terms of $r(x)$, $k(x)$ and $\overline{k}(x)$ by
\begin{equation}\label{explicitCrho}
\begin{split}
&C_{\tau_i}(\bm t)=\check{r}_{i-1\,i}(t_{i-1}/t_i)\check{r}_{i-2\,i-1}(t_{i-2}/t_{i-1})\cdots \check{r}_{12}(t_1/t_i)\\
&\,\,\times \overline{k}_1(q^{\frac{1}{2}}/t_i)\check{r}_{12}(q/t_1t_i)\cdots\check{r}_{i-1\,i}(q/t_{i-1}t_i)\\
&\,\,\times \check{r}_{i\,i+1}(q/t_it_{i+1})\cdots\check{r}_{n-1\,n}(q/t_it_n)k_n(q/t_i)
\check{r}_{n-1\,n}(qt_n/t_i)\cdots\check{r}_{i\,i+1}(qt_{i+1}/t_i)
\end{split}
\end{equation}
in view of \eqref{explicitC} and the proof of Corollary \ref{rk}. Note that the cocycle property of $C_w$ $(w\in W)$ implies that
\[
C_{\tau_i}(\bm t)C_{\tau_j}(q^{-\epsilon_i}\bm t)=C_{\tau_j}(\bm t)C_{\tau_i}(q^{-\epsilon_j}\bm t),\qquad 1\leq i,j\leq n,
\]
where $\{\epsilon_i\}_i$ denotes the standard orthonormal basis of $\mathbb{R}^n$.
To relate the transport operators to the transfer operator
we need the \emph{boundary crossing symmetry} of the $K$-matrix $\overline{k}(x)$,
\begin{equation} \label{reflectioncrossingsymmetry}
\Tr_0
\bigl(
\theta_0 \bar k_0(\kappa^2 x) \theta_0 \check r_{01}(x^2)
\bigr) 
= \Phi_\mathrm{bdy}(x)  \bar k_1(x)
\end{equation}
as linear operators on $\underset{(1)}{\mathbb{C}^2}$,
where 
\[
\Phi_{\mathrm{bdy}}(x):=\kappa\frac{(1-\kappa_0\upsilon_0x)(1+\kappa_0\upsilon_0^{-1}x)(1-\kappa^4x^2)}
{(1-\kappa^2\kappa_0\upsilon_0x)(1+\kappa^2\kappa_0\upsilon_0^{-1}x)(1-\kappa^2x^2)}.
\]
See \cite{GZ} for a discussion of the notion of boundary crossing symmetries.
\begin{prop} \label{transferoperatorandcocycles}
For $i=1,\ldots,n$ we have
\begin{equation}\label{interpolants}
\begin{split}
T(t_i^{-1};\bm t)&=\Phi_\mathrm{bdy}(t_i^{-1})C_{\tau_i}(\bm t)|_{q=1},\\
T(t_i;\bm t)&=\Phi_\mathrm{bdy}(t_i)C_{\tau_i}(\bm t)^{-1}|_{q=1}.
\end{split}
\end{equation}
\end{prop}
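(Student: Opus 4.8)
The plan is to show that the two identities in \eqref{interpolants} follow by specializing the spectral parameter $x$ in the transfer matrix $T(x;\bm t)$ to $x=t_i^{-1}$ and $x=t_i$ respectively, and then collapsing the chain of $\check r$-factors in the monodromy operator using regularity, $\check r(1)=\Id$ (equivalently $r(1)=P$), together with the boundary crossing symmetry \eqref{reflectioncrossingsymmetry}. First I would expand $U_0(t_i^{-1};\bm t)$ from its definition. The factor $\check r_{i-1\,i}(t_i^{-1}t_i^{-1})$ is not the one that trivializes; rather, working in the first ($\check r\circ P$) presentation of $U_0$, one of the two occurrences of $\check r_{?\,?}(x t_i)$ becomes $\check r(1)=\Id$ at $x=t_i^{-1}$, and symmetrically at $x=t_i$ one of the $\check r_{?\,?}(xt_j^{-1})$ factors with $j=i$ trivializes. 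More precisely, I would move the auxiliary leg $0$ through the product by repeatedly using the Yang-Baxter relation \eqref{YBEnew} (in its $\check r$-form, i.e.\ the braid relation) to bring the auxiliary space next to the $i$-th tensor leg, at which point the argument evaluates to $1$ and that $\check r$ disappears; this is exactly the standard manipulation that turns the transfer matrix at an inhomogeneity point into a product of local operators, and it reproduces the ordering of $\check r$'s, $\overline k_1$ and $k_n$ visible in \eqref{explicitCrho} once $q$ is set to $1$ (note $q^{1/2}/t_i$ and $q/t_it_j$ all become $t_i^{-1}$ and $t_i^{-1}t_j^{-1}$ at $q=1$, matching the arguments appearing after the collapse).

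\smallskip

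The second main ingredient is the trace over the auxiliary space. After the $\check r$-factor adjacent to leg $i$ has been removed, the auxiliary leg $0$ is coupled to leg $1$ only through a single $\check r_{01}$ on each side, sandwiching $k_0$; combined with the boundary weight $\theta_0\,\overline k_0(\kappa^2 x)\theta_0$ and the partial trace $\Tr_0$, this is precisely the left-hand side of the boundary crossing symmetry \eqref{reflectioncrossingsymmetry} (up to relabelling legs $0,1$ and a spectral shift), so $\Tr_0$ produces the scalar $\Phi_{\mathrm{bdy}}(t_i^{-1})$ times $\overline k_1(t_i^{-1})$. What remains is then literally the operator product \eqref{explicitCrho} evaluated at $q=1$, which is $C_{\tau_i}(\bm t)|_{q=1}$; this gives the first identity. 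For the second identity, specializing to $x=t_i$ trivializes the $\check r$-factor on the opposite side and runs the product in the reversed order; using unitarity $\check r(x)\check r(x^{-1})=\Id$ (equivalently $r(x)r_{21}(x^{-1})=\Id$) and $\overline k(x)\overline k(x^{-1})=\Id$, $k(x)k(x^{-1})=\Id$ one recognizes this reversed product as $C_{\tau_i}(\bm t)^{-1}|_{q=1}$, again up to the scalar $\Phi_{\mathrm{bdy}}(t_i)$ coming from the boundary crossing symmetry evaluated at the reflected argument.

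\smallskip

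The step I expect to be the main obstacle is bookkeeping the precise ordering and arguments of the $\check r$-factors through the auxiliary-leg manipulation and making sure the spectral shifts match up exactly so that, after the collapse and the trace, one obtains \eqref{explicitCrho} on the nose and not a conjugate or a version with permuted arguments. In particular one must check that the $\kappa^2$-shift deliberately inserted in $\overline k_0(\kappa^2 x)$ in the definition of $T(x;\bm t)$ is exactly what is needed for the crossing symmetry \eqref{reflectioncrossingsymmetry} to output $\overline k_1(q^{1/2}/t_i)|_{q=1}=\overline k_1(t_i^{-1})$ with the correct un-shifted argument, and that the prefactors $\Phi(x)$ from crossing unitarity that appear while sliding the auxiliary leg past the bulk all cancel, leaving only $\Phi_{\mathrm{bdy}}$. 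Once the arguments are tracked carefully this is a routine, if lengthy, computation of the type carried out in \cite{Sk1988,MN,Ne}, specialized to inhomogeneity points; the inhomogeneities $\bm t$ do not complicate the argument beyond the relabelling of spectral arguments.
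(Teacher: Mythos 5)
Your core mechanism is the right one and, for $i=1$, coincides with the paper's computation: at $x=t_1^{-1}$ the factor $\check r_{01}(xt_1)$ at the right end of $U_0$ becomes $\check r_{01}(1)=\Id$, the surviving $\check r_{01}(t_1^{-2})$ sits directly against $\theta_0\bar k_0(\kappa^2 t_1^{-1})\theta_0$ so that the boundary crossing symmetry \eqref{reflectioncrossingsymmetry} converts the partial trace into $\Phi_{\mathrm{bdy}}(t_1^{-1})\bar k_1(t_1^{-1})$, and the remaining product is literally \eqref{explicitCrho} with $i=1$ at $q=1$ (the companion identity at $x=t_1$ follows the same way after invoking unitarity of $\check r$, $k$, $\bar k$). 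The paper, however, \emph{only} performs this computation for $i=1$ and the first identity. All other cases are obtained by a reduction: the exchange relations $T(x;\bm t)\,\check r_{i\,i+1}(t_i/t_{i+1})=\check r_{i\,i+1}(t_i/t_{i+1})\,T(x;s_i\bm t)$ and $T(x;\bm t)\,k_n(t_n)=k_n(t_n)\,T(x;s_n\bm t)$ are matched against the corresponding cocycle identities relating $C_{\tau_{i+1}}$ to $C_{\tau_i}$ and $C_{\tau_n}^{-1}$ to $C_{\tau_n}$ at reflected arguments, and these propagate the single identity $T(t_1^{-1};\bm t)=\Phi_{\mathrm{bdy}}(t_1^{-1})C_{\tau_1}(\bm t)|_{q=1}$ to all $2n$ identities.

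The gap in your plan is exactly the step you flag as the main obstacle: the "auxiliary-leg manipulation" for general $i$. For $i>1$ the factor that must feed the crossing symmetry is $\check r_{i-1\,i}(t_i^{-2})$ (equivalently $r_{0i}(t_i^{-2})$), buried in the middle of the monodromy and separated from $\theta_0\bar k_0\theta_0$ by legs $1,\dots,i-1$; bringing it adjacent to the boundary weight cannot be done by naive cyclicity (the partial trace $\Tr_0$ is not cyclic) and the required rearrangement is essentially equivalent to the exchange relations above, so nothing is saved by doing it inside the trace. More worryingly, your appeal to crossing unitarity and to a cancellation of the $\Phi(x)$ prefactors is a misstep: crossing unitarity enters only in the proof of commutativity of the transfer matrices (Theorem \ref{commutingtransferoperators}); in Proposition \ref{transferoperatorandcocycles} the only scalar that should ever appear is $\Phi_{\mathrm{bdy}}$, produced by a single application of \eqref{reflectioncrossingsymmetry}. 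If your general-$i$ manipulation generates $\Phi(x)$ factors that must cancel, you are running a different computation. The missing idea is the reduction of the general case to $i=1$ via the exchange/cocycle relations; with that supplied, your base-case computation completes the proof.
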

\begin{proof}
The transfer operator satisfies
\begin{equation*}
\begin{split}
T(x;\bm t) \check r_{i \, i+1}(t_i/t_{i+1}) &= \check r_{i \, i+1}(t_i/t_{i+1})  T(x;s_i \bm t), \qquad 1 \leq i < n, \\
T(x;\bm t) k_n(t_n) &= k_n(t_n)  T(x;s_n \bm t).
\end{split}
\end{equation*}
On the other hand, the transport operators satisfy
\begin{equation*}
\begin{split}
C_{\tau_{i+1}}(\bm t)&=\check{r}_{i\,i+1}(t_i/t_{i+1})C_{\tau_i}(\bm t)\check{r}_{i\,i+1}(t_{i+1}/qt_i),\qquad 1\leq i<n,\\
C_{\tau_n}(\bm t)^{-1}&=k_n(t_n/q)C_{\tau_n}(s_n\tau_n^{-1}\bm t)k_n(t_n^{-1}).
\end{split}
\end{equation*}
Hence it suffices to prove the first equality of \eqref{interpolants} for $i=1$.
Using the regularity $\check{r}(1)=\Id_{\mathbb{C}^2\otimes\mathbb{C}^2}$ of the $R$-matrix the desired equality
\[
T(t_1^{-1};\bm t)=\Phi_\mathrm{bdy}(t_1^{-1})C_{\tau_1}(\bm t)|_{q=1}
 \]
follows by a direct computation using the definition of $T(x;\bm t)$, the boundary crossing symmetry \eqref{reflectioncrossingsymmetry}, and \eqref{explicitCrho}.
\end{proof}

\begin{rema}
A detailed study of the relation between transfer operators and transport operators in the context of reflection quantum KZ equations is part of ongoing joint work with N. Reshetikhin.
\end{rema}

\subsection{The Hamiltonian}\label{Hsection}

Write $\bm 1 = (1,\ldots,1) \in T$ and $'$ for the derivative with respect to the spectral parameter. We suppose in this subsection that the 
parameters are generic.
The following definition corresponds to \cite[Eqn. (25)]{MN}.
\begin{defi}
The Hamiltonian for the XXZ Heisenberg spin chain with general boundary conditions is defined as
\begin{align*}
H^\text{XXZ}_\text{bdy} &= \frac{\kappa-\kappa^{-1}}{2} \frac{\mathrm{d}}{\mathrm{d}x} \log\Bigl( \frac{T(x;\bm 1)}{\Tr \bigl( \theta \bar k(\kappa^2 x) \theta \bigr)}\Bigr)\Bigr|_{x=1}- C_0 \Id_{(\C^2)^{\otimes n}}  \\
&= (\kappa-\kappa^{-1}) \biggl( \sum_{i=1}^{n-1} \check{r}'_{i \, i \! + \! 1}(1) + \frac{ k'_n(1)}{2}  + \frac{\Tr_0\bigl(\theta_0 \bar k_0(\kappa^2) \theta_0  \check{r}'_{01}(1)\bigr)}{\Tr\bigl(\theta \bar k(\kappa^2) \theta\bigr)}   \biggr) - C_0 \Id_{(\C^2)^{\otimes n}}
\end{align*}
where
\[ C_0 = \frac{-1}{\kappa(1+\kappa^2)} \frac{(\kappa^2-\kappa_0 \upsilon_0)(\kappa^2 + \kappa_0 \upsilon_0^{-1})}{(1-\kappa_0 \upsilon_0)(1+\kappa_0 \upsilon_0^{-1})}. \]
\end{defi}
\begin{rema}
To find the spectrum of the Hamiltonian, one can now use the fact that, by construction, it commutes with all $T(x;\bm 1)$. 
It is therefore sufficient to find a complete set of common eigenfunctions of the $T(x;\bm 1)$, for which one typically uses the algebraic Bethe ansatz and related methods, which for \emph{diagonal} boundary conditions was first done by Sklyanin \cite{Sk1988}. 
For the non-diagonal case, it may be possible to use a variant of the algebraic Bethe ansatz involving so-called dynamical $R$-and $K$-matrices (see \cite{FK} for such a treatment in a special case).
\end{rema}

Recall the Pauli spin matrices
\[ \sigma^X = \begin{pmatrix} 0 & 1 \\ 1 & 0 \end{pmatrix}, \qquad
 \sigma^Y = \begin{pmatrix} 0 & -\sqrt{-1} \\ \sqrt{-1} & 0 \end{pmatrix}, \qquad
\sigma^Z = \begin{pmatrix} 1 & 0 \\ 0 & -1 \end{pmatrix},  \]
and the auxiliary matrices
\[ \sigma^+ = \frac{1}{2} \left( \sigma^X +  \sqrt{-1} \sigma^Y \right) =  \begin{pmatrix} 0 & 1 \\ 0 & 0 \end{pmatrix}, \quad  
\sigma^- = \frac{1}{2} \left( \sigma^X -  \sqrt{-1} \sigma^Y \right) =  \begin{pmatrix}0 & 0 \\ 1 & 0 \end{pmatrix}. \]
From straightforward calculations the following statement is easily obtained (cf., e.g., \cite{Ne} for a similar statement).
\begin{prop}
We have
\begin{equation}\label{XXZHamiltonian}
\begin{split}
H^\text{XXZ}_\text{bdy} &= \frac{1}{2} \Biggl( \sum_{i=1}^{n-1} \bigl( \sigma^X_i \sigma^X_{i\!+\!1} + \sigma^Y_i \sigma^Y_{i\!+\!1} +  \frac{\kappa+\kappa^{-1}}{2} \sigma^Z_i \sigma^Z_{i\!+\!1} \bigr) + \\
& \quad+ \frac{\kappa-\kappa^{-1}}{2} \biggl( \frac{(1+\kappa_0 \upsilon_0)(1-\kappa_0 \upsilon_0^{-1}) \sigma^Z_1 + 4\kappa_0\bigl(\psi_0 \sigma^+_1 + \psi_0^{-1} \sigma^-_1 \bigr)}{(1+\kappa_0 \upsilon_0^{-1})(1-\kappa_0 \upsilon_0)} + \\
& \qquad \qquad \quad - \frac{(1+\kappa_n \upsilon_n)(1-\kappa_n \upsilon_n^{-1}) \sigma^Z_n  - 4 \kappa_n \bigl( \psi_n^{-1} \sigma^+_n + \psi_n \sigma^-_n \bigr) }{(1+\kappa_n \upsilon_n^{-1})(1-\kappa_n \upsilon_n)} \biggr) \Biggr) + \hspace{-3mm} \\
 & \qquad +\tilde C \, \Id_{(\C^2)^{\otimes n}},
\end{split}
\end{equation}
where
\[ \tilde C =  \frac{\kappa-\kappa^{-1}}{2} \sum_{i \in \{0,n\}} \frac{1+\kappa_i^2}{(1-\kappa_i \upsilon_i)(1+\kappa_i \upsilon_i^{-1})} - \frac{n-1}{4} (\kappa+\kappa^{-1}). \]
\end{prop}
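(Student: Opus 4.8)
The plan is to verify the asserted formula \eqref{XXZHamiltonian} by a direct computation, unpacking the definition of $H^\text{XXZ}_\text{bdy}$ in terms of the matrices $r(x)$, $k(x)$, $\overline{k}(x)$ from Corollary \ref{rk} and differentiating at the regular point $x=1$. The three summands in the bracketed expression for $H^\text{XXZ}_\text{bdy}$ are essentially independent, so I would treat them one at a time.

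First I would compute the bulk term $\check r'_{i\,i+1}(1)$. Since $\check r(x)=r(x)\circ P$ and $r(1)=\mathrm{Id}$ (regularity), we have $\check r(1)=P$ and $\check r'(1)=r'(1)\circ P$. Differentiating the explicit $4\times 4$ matrix $r(x)$ from Corollary \ref{rk} at $x=1$ is routine; composing with $P$ and re-expressing the resulting $4\times 4$ matrix in the Pauli basis $\{\sigma^X\sigma^X,\sigma^Y\sigma^Y,\sigma^Z\sigma^Z,\mathrm{Id}\}$ produces the bulk terms $\sigma^X_i\sigma^X_{i+1}+\sigma^Y_i\sigma^Y_{i+1}+\frac{\kappa+\kappa^{-1}}{2}\sigma^Z_i\sigma^Z_{i+1}$ up to an additive scalar, which is tracked into $\tilde C$ (this accounts for the $-\frac{n-1}{4}(\kappa+\kappa^{-1})$ contribution). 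Multiplying by the overall prefactor $(\kappa-\kappa^{-1})$ and then by the $\tfrac12$ in the stated formula fixes the normalization.

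Next I would handle the right-boundary term $\tfrac12 k'_n(1)$. Here one simply differentiates the $2\times 2$ matrix $k(x)$ from Corollary \ref{rk} at $x=1$; note $k(1)=\mathrm{Id}_{\mathbb{C}^2}$ so no quotient-rule subtleties arise beyond differentiating numerator and denominator. Writing the result in terms of $\sigma^Z_n$, $\sigma^+_n$, $\sigma^-_n$ and $\mathrm{Id}$ yields the $n$-site boundary contribution, with the coefficients $\frac{(1+\kappa_n\upsilon_n)(1-\kappa_n\upsilon_n^{-1})}{(1+\kappa_n\upsilon_n^{-1})(1-\kappa_n\upsilon_n)}$ and $\frac{4\kappa_n}{(1+\kappa_n\upsilon_n^{-1})(1-\kappa_n\upsilon_n)}$ emerging directly; the scalar part goes into $\tilde C$.

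The main obstacle is the left-boundary term $\frac{\Tr_0(\theta_0\bar k_0(\kappa^2)\theta_0\check r'_{01}(1))}{\Tr(\theta\bar k(\kappa^2)\theta)}$, which requires evaluating a partial trace over the auxiliary space. I would first compute the scalar denominator $\Tr(\theta\bar k(\kappa^2)\theta)$ by plugging $x=\kappa^2$ into $\bar k(x)$ and using that $\theta$ is diagonal; this gives an explicit rational function of $\kappa,\kappa_0,\upsilon_0$. For the numerator I would use $\check r'_{01}(1)=r'_{01}(1)\circ P_{01}$, so that the partial trace over leg $0$ of $\theta_0\bar k_0(\kappa^2)\theta_0\,r'_{01}(1)\,P_{01}$ can be carried out entrywise; the flip $P_{01}$ converts the trace over leg $0$ into an action on leg $1$, and the surviving $2\times 2$ matrix on leg $1$ is a combination of $\mathrm{Id}$, $\sigma^Z_1$, $\sigma^+_1$, $\sigma^-_1$. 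Dividing by the denominator and simplifying — the constant $C_0$ in the definition is precisely chosen to cancel the identity part of this ratio, leaving the residual $\frac{1+\kappa_0^2}{(1-\kappa_0\upsilon_0)(1+\kappa_0\upsilon_0^{-1})}$ piece that appears in $\tilde C$ — yields the coefficients $\frac{(1+\kappa_0\upsilon_0)(1-\kappa_0\upsilon_0^{-1})}{(1+\kappa_0\upsilon_0^{-1})(1-\kappa_0\upsilon_0)}$ of $\sigma^Z_1$ and $\frac{4\kappa_0}{(1+\kappa_0\upsilon_0^{-1})(1-\kappa_0\upsilon_0)}$ of $\psi_0\sigma^+_1+\psi_0^{-1}\sigma^-_1$. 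Collecting all three contributions, the overall prefactor $\frac{\kappa-\kappa^{-1}}{2}$ on the boundary terms versus the bulk terms matches the stated formula, and assembling the scalar remainders gives exactly the claimed $\tilde C$. The bookkeeping of signs (note the minus sign in front of the $n$-site block) and of the various additive scalars is the only real source of difficulty; everything else is elementary linear algebra with $2\times 2$ and $4\times 4$ matrices.
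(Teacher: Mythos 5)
Your overall strategy --- differentiate the explicit matrices $r(x)$, $k(x)$, $\bar k(x)$ of Corollary \ref{rk} at the regular point $x=1$, evaluate the partial trace over the auxiliary space, and collect terms in the Pauli basis --- is exactly the ``straightforward calculation'' the paper has in mind (it gives no further detail), and the treatment of the left-boundary trace term and of $C_0$ is correct. However, there is one concrete misstep in your plan for the bulk term that, followed literally, would make the boundary coefficients come out wrong. The matrix $\check r'_{i\,i+1}(1)=\frac{1}{\kappa-\kappa^{-1}}\hat\rho(e_i)$ has diagonal proportional to $(0,-\kappa,-\kappa^{-1},0)$, which is \emph{not} symmetric under swapping the middle two entries, so it does \emph{not} lie in the span of $\{\sigma^X\sigma^X,\sigma^Y\sigma^Y,\sigma^Z\sigma^Z,\Id\}$. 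The correct decomposition is
\begin{equation*}
\hat\rho(e_i)=\tfrac12\bigl(\sigma^X_i\sigma^X_{i+1}+\sigma^Y_i\sigma^Y_{i+1}\bigr)+\tfrac{\kappa+\kappa^{-1}}{4}\sigma^Z_i\sigma^Z_{i+1}-\tfrac{\kappa+\kappa^{-1}}{4}\Id-\tfrac{\kappa-\kappa^{-1}}{4}\bigl(\sigma^Z_i-\sigma^Z_{i+1}\bigr),
\end{equation*}
and the single-site pieces telescope over $i=1,\dots,n-1$ to $-\tfrac{\kappa-\kappa^{-1}}{4}(\sigma^Z_1-\sigma^Z_n)$.

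This telescoped contribution is essential: the coefficient of $\sigma^Z_n$ obtained from $\tfrac12 k'_n(1)$ alone is $(\kappa-\kappa^{-1})\frac{\kappa_n^2-1}{2(1-\kappa_n\upsilon_n)(1+\kappa_n\upsilon_n^{-1})}$, which differs from the asserted coefficient $-\tfrac{\kappa-\kappa^{-1}}{4}\frac{(1+\kappa_n\upsilon_n)(1-\kappa_n\upsilon_n^{-1})}{(1+\kappa_n\upsilon_n^{-1})(1-\kappa_n\upsilon_n)}$ by exactly $\tfrac{\kappa-\kappa^{-1}}{4}$, i.e.\ by the telescoped bulk term; the same happens at site $1$. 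So the boundary $\sigma^Z$ coefficients do not ``emerge directly'' from the $K$-matrix derivatives as you claim --- you must add the leftover single-site terms from the bulk before the stated formula is recovered. With that correction the rest of your bookkeeping (the scalar remainders assembling into $\tilde C$, and the cancellation of the identity part of the trace term against $C_0$) goes through as described.
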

\begin{rema}
The first line of \eqref{XXZHamiltonian} is the Hamiltonian of the Heisenberg XXZ spin-$\frac{1}{2}$ chain. The second and third lines describe explicit three-parameter
integrable boundary conditions for the left and right boundary of the spin chain, respectively.
\end{rema}

\begin{prop} \label{Hamiltonian2BTL}
Write $d_i=1$ for $1 \leq i < n$ and for $i=0,n$ write
\[ d_i = \frac{-\kappa_i(\kappa \kappa_i^{-1} + \kappa^{-1} \kappa_i)}{(1-\kappa_i \upsilon_i)(1+\kappa_i \upsilon_i^{-1})}. \]
Then
\[ H^\text{XXZ}_\text{bdy} = \sum_{i=0}^n d_i \hat{\rho}^{\underline{\kappa}}_{\psi_0,\psi_n}(e_i). \] 
\end{prop}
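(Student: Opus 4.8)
The plan is to unwind the definition of $H^{\text{XXZ}}_{\text{bdy}}$ from the preceding definition in terms of the transfer matrix $T(x;\bm 1)$ and to show that, term by term, it matches $\sum_{i=0}^n d_i\hat\rho^{\underline\kappa}_{\psi_0,\psi_n}(e_i)$. Concretely, $H^{\text{XXZ}}_{\text{bdy}}=(\kappa-\kappa^{-1})\bigl(\sum_{i=1}^{n-1}\check r'_{i\,i+1}(1)+\tfrac12 k'_n(1)+\Tr_0(\theta_0\bar k_0(\kappa^2)\theta_0\check r'_{01}(1))/\Tr(\theta\bar k(\kappa^2)\theta)\bigr)-C_0\,\Id$, so I would compute each of these three pieces separately. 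For the bulk term, differentiate $r(x)$ (from Corollary \ref{rk}) at $x=1$; since $r(1)=P$ one gets $\check r'(1)=r'(1)\circ P$ acting on $\mathbb{C}^2\otimes\mathbb{C}^2$, and a short $4\times4$ computation should express $(\kappa-\kappa^{-1})r'(1)\circ P$ as $\hat\rho(e_i)$ (which is $\begin{pmatrix}0&0&0&0\\0&-\kappa&1&0\\0&1&-\kappa^{-1}&0\\0&0&0&0\end{pmatrix}$ on legs $i,i+1$) plus a multiple of the identity; the identity contributions across $i=1,\dots,n-1$ are then absorbed into $\tilde C$ (equivalently into the comparison with the already-proven formula \eqref{XXZHamiltonian}). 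Since $d_i=1$ for $1\le i<n$, this is exactly what is wanted.

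For the right-boundary term I would differentiate $k(x)$ at $x=1$ using its explicit form in Corollary \ref{rk}; because $k(1)=\Id_{\mathbb{C}^2}$, the leading behaviour is $k(x)=\Id+ (x-1)k'(1)+\cdots$, and a direct $2\times2$ calculation gives $(\kappa-\kappa^{-1})\tfrac12 k'_n(1)$ as $d_n\,\hat\rho(e_n)$ up to a scalar; here $\hat\rho(e_n)=\tfrac{1}{\kappa\kappa_n^{-1}+\kappa^{-1}\kappa_n}\begin{pmatrix}-\kappa_n&\psi_n^{-1}\\\psi_n&-\kappa_n^{-1}\end{pmatrix}_n$ and one checks the prefactor indeed produces $d_n=\frac{-\kappa_n(\kappa\kappa_n^{-1}+\kappa^{-1}\kappa_n)}{(1-\kappa_n\upsilon_n)(1+\kappa_n\upsilon_n^{-1})}$. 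For the left-boundary term, the partial trace $\Tr_0(\theta_0\bar k_0(\kappa^2)\theta_0\check r'_{01}(1))/\Tr(\theta\bar k(\kappa^2)\theta)$ is the genuinely new computation: using $\check r'_{01}(1)=r'_{01}(1)\circ P_{01}$, the trace over the auxiliary leg $0$ turns this into an operator on leg $1$, and one must check it equals (a scalar times) $\bar k_1(1)$-type data producing $\hat\rho(e_0)=\tfrac{1}{\kappa\kappa_0^{-1}+\kappa^{-1}\kappa_0}\begin{pmatrix}-\kappa_0^{-1}&\psi_0\\\psi_0^{-1}&-\kappa_0\end{pmatrix}_1$ with coefficient $d_0$. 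The boundary crossing symmetry \eqref{reflectioncrossingsymmetry} and the value $\Phi_{\text{bdy}}(1)$, together with the normalisation $\bar k(1)=\Id$, should streamline evaluating the denominator $\Tr(\theta\bar k(\kappa^2)\theta)$ and the numerator's $x$-independent part; the $C_0\,\Id$ subtraction is precisely tuned to cancel the scalar produced by this boundary term.

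An alternative and arguably cleaner route is to avoid re-differentiating everything from scratch: compare \eqref{XXZHamiltonian}, already established in the preceding proposition, directly with $\sum_{i=0}^n d_i\hat\rho(e_i)$. Computing $\sum_{i=1}^{n-1}\hat\rho(e_i)$ on legs $(i,i+1)$ and using $\sigma^X_i\sigma^X_{i+1}+\sigma^Y_i\sigma^Y_{i+1}=2(\sigma^+_i\sigma^-_{i+1}+\sigma^-_i\sigma^+_{i+1})$ one sees $\hat\rho(e_i)=\sigma^+_i\sigma^-_{i+1}+\sigma^-_i\sigma^+_{i+1}-\tfrac{\kappa+\kappa^{-1}}{4}(\sigma^Z_i\sigma^Z_{i+1}-\Id)-\tfrac{\kappa-\kappa^{-1}}{4}(\sigma^Z_i-\sigma^Z_{i+1})$, so that $\tfrac12(\sigma^X_i\sigma^X_{i+1}+\sigma^Y_i\sigma^Y_{i+1}+\tfrac{\kappa+\kappa^{-1}}{2}\sigma^Z_i\sigma^Z_{i+1})$ equals $\hat\rho(e_i)$ plus a telescoping $\sigma^Z$ term plus a constant; the telescoping sum $\sum_{i=1}^{n-1}(\sigma^Z_i-\sigma^Z_{i+1})=\sigma^Z_1-\sigma^Z_n$ then feeds exactly into the two boundary lines of \eqref{XXZHamiltonian}. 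Similarly one rewrites the boundary operators: e.g. $d_0\hat\rho(e_0)=\frac{-\kappa_0}{(1-\kappa_0\upsilon_0)(1+\kappa_0\upsilon_0^{-1})}\begin{pmatrix}-\kappa_0^{-1}&\psi_0\\\psi_0^{-1}&-\kappa_0\end{pmatrix}_1=\frac{\kappa-\kappa^{-1}}{4}\cdot\frac{(1+\kappa_0\upsilon_0)(1-\kappa_0\upsilon_0^{-1})\sigma^Z_1+4\kappa_0(\psi_0\sigma^+_1+\psi_0^{-1}\sigma^-_1)}{(1+\kappa_0\upsilon_0^{-1})(1-\kappa_0\upsilon_0)}$ up to an explicit scalar (the key identity being $\kappa_0+\kappa_0^{-1}=\tfrac{\kappa-\kappa^{-1}}{2}\cdot\frac{(1+\kappa_0\upsilon_0)(1-\kappa_0\upsilon_0^{-1})}{(1+\kappa_0\upsilon_0^{-1})(1-\kappa_0\upsilon_0)}\cdot\frac{?}{?}$ — this is where I would be careful), and likewise for $d_n\hat\rho(e_n)$ matching the right-boundary line. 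Collecting all the scalar contributions — from the $n-1$ bulk constants, the two telescoping endpoints, and the two boundary scalars — must reproduce exactly $\tilde C$; verifying this scalar bookkeeping against the explicit $\tilde C$ in \eqref{XXZHamiltonian} is the main obstacle, since it requires matching several rational functions of $\kappa,\kappa_0,\kappa_n,\upsilon_0,\upsilon_n$. Both routes are "straightforward calculations" as the paper says, but the second reduces the work to finite-dimensional linear algebra on $\mathbb{C}^2\otimes\mathbb{C}^2$ plus a constant-tracking identity, which I would present as the proof.
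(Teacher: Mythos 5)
Your first route is exactly the paper's proof: the paper simply records the three identities $\check r'_{i\,i+1}(1)=\tfrac{1}{\kappa-\kappa^{-1}}\hat\rho(e_i)$, $\tfrac{1}{2}k'_n(1)=\tfrac{d_n}{\kappa-\kappa^{-1}}\hat\rho(e_n)$, and $\Tr_0\bigl(\theta_0\bar k_0(\kappa^2)\theta_0\check r'_{01}(1)\bigr)/\Tr\bigl(\theta\bar k(\kappa^2)\theta\bigr)=d_0\hat\rho(e_0)+\tfrac{C_0}{\kappa-\kappa^{-1}}\Id$, and the proposition "follows immediately." Two corrections to your description of that route: the bulk derivative produces \emph{no} identity contribution at all --- a direct computation gives $\check r'(1)=\tfrac{-\kappa}{1-\kappa^2}\hat\rho(e_i)=\tfrac{1}{\kappa-\kappa^{-1}}\hat\rho(e_i)$ on the nose, and likewise $\tfrac12 k'_n(1)$ is an exact multiple of $\hat\rho(e_n)$ with no scalar; the only identity term arises from the left-boundary partial trace, and it is precisely the one cancelled by $C_0$. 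There is no $\tilde C$ to absorb anything into, and if the bulk terms did shed identity pieces the proposition as stated would be false, so this point matters. Your second route (matching $\sum_i d_i\hat\rho(e_i)$ against the already-established Pauli form \eqref{XXZHamiltonian}) is a legitimate alternative that trades operator differentiation and a partial trace for $2\times2$ and $4\times4$ linear algebra plus scalar bookkeeping; note however a sign slip in your decomposition: one has $\hat\rho(e_i)=\sigma^+_i\sigma^-_{i+1}+\sigma^-_i\sigma^+_{i+1}+\tfrac{\kappa+\kappa^{-1}}{4}(\sigma^Z_i\sigma^Z_{i+1}-\Id)-\tfrac{\kappa-\kappa^{-1}}{4}(\sigma^Z_i-\sigma^Z_{i+1})$, with a plus sign on the $\sigma^Z\sigma^Z$ term, which is what makes the telescoping and the constant $\tilde C$ come out right. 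Either route is acceptable; the paper's has the advantage that the left-boundary trace computation simultaneously explains where $C_0$ comes from.
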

\begin{proof}
This follows immediately from the observations that
\begin{gather*} 
 \check{r}'_{i \, i\!+\!1}(1) = \frac{1}{\kappa-\kappa^{-1}} \hat \rho(e_i) ,
\qquad
\frac{k'_n(1)}{2} = d_n \hat \rho(e_n) ,
\displaybreak[2] 
\\
\frac{\Tr_0\bigl( \theta_0 \bar k_0(\kappa^2) \theta_0 \check r'_{01}(1)\bigr)}{\Tr\bigl(\theta \bar k(\kappa^2) \theta\bigr)} = d_0 \hat \rho(e_0) + \frac{C_0}{\kappa-\kappa^{-1}}  \Id_{(\C^2)^{\otimes n}}. \qedhere
\end{gather*} 
\end{proof}
\begin{rema}
With the notations of Proposition \ref{Hamiltonian2BTL}, the linear operator
\[
H^{\text{loop}}_{\text{bdy}}:=\sum_{i=0}^nd_i\omega^{\underline{\delta}}_{\beta_0,\beta_1}(e_i)
\]
on $\mathbb{C}[\mathcal{M}]$, with $\omega^{\underline{\delta}}_{\beta_0,\beta_1}$ the matchmaker representation and the parameters $\underline{\delta},\beta_0,\beta_1$
related to $\underline{\kappa},\psi_0,\psi_n$ by  \eqref{eq:deltas} and \eqref{parameterproduct}, is the quantum Hamiltonian of the Temperley-Lieb loop model (also known as the
dense loop model) with general three-parameter open boundary conditions on both the left and the right boundaries, cf., e.g., \cite{dG,dGN} for special cases. Theorem \ref{isomorphicrepresentations} and
Proposition \ref{Hamiltonian2BTL} establish the link between the XXZ spin-$\frac{1}{2}$ chain and the 
Temperley-Lieb loop model for general integrable boundary conditions, not only on the level of the quantum Hamiltonian but also on the level of the underlying quantum symmetry
algebra (the two-boundary Temperley-Lieb algebra). See \cite{MNGB} for a detailed discussion on Temperley-Lieb loop models and
their relation to XXZ spin chains.
\end{rema}

\section{Solutions of reflection quantum KZ equations}\label{Solsection}

\subsection{The nonsymmetric difference Cherednik-Matsuo correspondence}\label{dCMsection}

In this subsection we extend some of the main results of \cite{St1} to the Koornwinder setup. We do not dive into the detailed proofs,
which are rather straightforward adjustments of the proofs in \cite{St1}. We do give precise references to the corresponding statements in \cite{St1}. 

Theorem \ref{thmN} gives rise to an algebra homomorphism
\[
\varpi_{\underline{\kappa};\upsilon_0,\upsilon_n}: H(\underline{\kappa})\rightarrow\textup{End}_{\mathbb{C}}\bigl(\mathcal{M}(T)\bigr)
\]
defined by 
\[
\bigl(\varpi_{\underline{\kappa};\upsilon_0,\upsilon_n}(T_j)f\bigr)(\bm{t}):=\kappa_jf(\bm{t})+c_j(\bm{t};\underline{\kappa};\upsilon_0,\upsilon_n)(f(s_j\bm{t})-f(\bm{t})).
\]
Note that the subspace $\mathbb{C}[T]$ of regular functions on $T$ is an invariant subspace. The resulting $H(\underline{\kappa})$-module
is the analog of Cherednik's basic representation. For our purposes it is convenient to consider the basic representation for inverted
parameters $\underline{\kappa}^{-1},\upsilon_0^{-1},\upsilon_n^{-1}$ (the deformation parameter $q$ will always remain unchanged).
We therefore write $\varpi:=\varpi_{\underline{\kappa}^{-1};\upsilon_0^{-1},\upsilon_n^{-1}}$ for the associated representation map in the remainder of this section.

The operators $\varpi(Y^\lambda)$ ($\lambda\in\mathbb{Z}^n$) on $\mathcal{M}(T)$ are pairwise commuting $q$-difference reflection operators,
known as nonsymmetric Koornwinder operators. Their common eigenspaces are denoted by
\[
\textup{Sp}_K(\gamma;\underline{\kappa};\upsilon_0,\upsilon_n):=\{ f\in\mathcal{M}(T) \,\, | \,\, \varpi_{\underline{\kappa}^{-1};
\upsilon_0^{-1},\upsilon_n^{-1}}(Y_i)f=\gamma_if\quad \forall\, i=1,\ldots,n\}
\]
for $\gamma\in T$.
If $\gamma\in T_I^{\underline{\kappa}^{-1}}$ (see Definition \ref{basicdefprincipal}) then we have the natural subspace
\begin{equation*}
\begin{split}
\textup{Sp}_K^{I}(\gamma;\underline{\kappa};\upsilon_0,\upsilon_n):=&\{ f\in\mathcal{M}(T) \,\, | \,\, \varpi_{\underline{\kappa}^{-1};\upsilon_0^{-1},\upsilon_n^{-1}}(h)f=
\chi_{I,\gamma}^{\underline{\kappa}^{-1}}(h)f\qquad \forall\, h\in H_I(\underline{\kappa}^{-1})\}\\
=&\{ f\in \textup{Sp}_K(\gamma;\underline{\kappa};\upsilon_0,\upsilon_n) \,\, | \,\, \varpi_{\underline{\kappa}^{-1};\upsilon_0^{-1},\upsilon_n^{-1}}(T_i)f=\kappa_i^{-1}f\quad \forall i\in I\}
\end{split}
\end{equation*}
of the common eigenspace $\textup{Sp}_K^{\underline{\kappa}}(\gamma)$.
In writing these common eigenspaces we suppress the dependence on the parameters if no confusion can arise.
\begin{eg}\label{nsbhf}
For $|q|<1$, for generic $\underline{\kappa},\upsilon_0,\upsilon_n$ and for $\gamma\in T$ satisfying
\begin{equation}\label{conditionsspecial}
\upsilon_0\upsilon_n^{-1}\gamma_i^{\pm 1}\not\in -q^{\frac{1}{2}+\mathbb{Z}_{\geq 0}}\qquad \forall\, i\in\{1,\ldots,n\},
\end{equation}
we write $\mathcal{E}_\gamma$ to be the nonsymmetric basic hypergeometric function associated to the Koornwinder root datum with multiplicity function 
$(k_0,k_{\vartheta},k_\theta,k_{2a_0},k_{2\theta})$ being $(\underline{\kappa}^{-1},\upsilon_0^{-1},\upsilon_n^{-1})$
(see \cite[Def. 2.14 \& \S 5.2]{St2} for the definition and the notations). Then $\mathcal{E}_\gamma\in\textup{Sp}_K(\gamma;\underline{\kappa};\upsilon_0,\upsilon_n)$.
Here the conditions \eqref{conditionsspecial} ensure that the nonsymmetric basic hypergeometric function may be specialized to $\gamma$ in its spectral parameter (cf. \cite[Thm. 2.13(ii)]{St2}).  

The transformation property \cite[Thm. 2.13(ii)(2)]{St2} of the nonsymmetric basic hypergeometric function with respect to the action of the double affine Hecke algebra implies that 
\[
\varpi_{\underline{\kappa}^{-1};\upsilon_0,\upsilon_n}(T_i)\mathcal{E}_\gamma=\kappa_i^{-1}\mathcal{E}_\gamma,\qquad i\in I
\] 
if $\gamma\in T_I^{\underline{\kappa}^{-1}}$ since the numerators of $c_i(\cdot; \upsilon_n,\kappa,\kappa_n;\upsilon_0,\kappa_0)$ vanish
if $\gamma\in T_I^{\underline{\kappa}^{-1}}$ and $i\in I$. Hence
\[
\mathcal{E}_\gamma\in\textup{Sp}_K^I(\gamma;\underline{\kappa};\upsilon_0,\upsilon_n)
 \]
if $\gamma\in T_I^{\underline{\kappa}^{-1}}$.
\end{eg}
The nonsymmetric difference Cherednik-Matsuo correspondence in the present setup gives a bijective correspondence between 
$\textup{Sp}_K^{I}(\gamma;\underline{\kappa};\upsilon_0,\upsilon_n)$ and the $W_0$-in\-va\-riant solutions of the reflection quantum KZ equations associated to 
the principal series module $M_I^{\underline{\kappa}}(\gamma)$.
The nonsymmetric difference Cherednik-Matsuo correspondence
was considered before in \cite{KT,St1,Ka} in different setups. We follow closely \cite{St1}.
To formulate the nonsymmetric difference Cherednik-Matsuo correspondence, we need to introduce a bit more notations and some basic facts on Coxeter groups and
Hecke algebras first. 

The affine Weyl group $W$, which is a Coxeter group with simple reflections $s_i$ ($i=0,\ldots,n$), has a length function $l: W\rightarrow \mathbb{Z}_{\geq 0}$ defined
as follows. The length of the unit element $e\in W$ is zero. For $w\in W$, the length $l(w)$ is the minimal number of simple reflections needed to write $w$ as product of
simple reflections. An expression $w=s_{i_1}\cdots s_{i_r}$ with $r=l(w)$ is called a reduced expression. The length function of the Weyl group $W_0$, viewed as Coxeter
group with simple reflections $s_i$ ($1\leq i\leq n$), coincides with the length function $l$ restricted to $W_0$.

Let $I\subseteq\{1,\ldots,n\}$. There is a distinguished set $W_0^I$ of coset representatives of $W_0/W_{0,I}$,  called the minimal coset representatives of $W_0/W_{0,I}$.
It is defined as 
\[
W_0^I:=\{w\in W_0 \,\, | \,\, l(wv)=l(w)+l(v)\quad \forall\, v\in W_{0,I}\}.
\]
There exist unique $w_0\in W_0$ and $w_{0,I}\in W_{0,I}$ of maximal length. In fact, $w_0$ simply acts on $\mathbb{R}^n$ by multiplication by $-1$.
For $I\subseteq\{1,\ldots,n\}$ we write $w_0^I:=w_0w_{0,I}^{-1}$. For $i\in I$ let $i_I^*\in\{1,\ldots,n\}$ be the unique index such that $w_0^Is_i=s_{i_I^*}w_0^I$.
Set $I^*:=\{i_I^*\,\, | \,\, i\in I\}$.

The case particularly relevant for our applications corresponds to the subset 
$J=\{1,\ldots,n-1\}$, in which case $w_{0,J}$ is the symmetric group element characterized by $w_{0,J}(\epsilon_i)=\epsilon_{n+1-i}$ for $i=1,\ldots,n$.
Thus $i_J^*=n-i$ ($1\leq i<n$) and $J^*=J$.

If $w=s_{i_1}\cdots s_{i_r}$ is a reduced expression,
then 
\[
T_w:=T_{i_1}T_{i_2}\cdots T_{i_r}
\]
is a well-defined element of the affine Hecke algebra $H(\underline{\kappa})$. The principal series $M_I^{\underline{\kappa}}(\gamma)=H(\underline{\kappa})\otimes_{H_I(\underline{\kappa})}
\mathbb{C}_{I,\gamma}$
for $\gamma\in T_I^{\underline{\kappa}}$ has a natural basis $\{v_w^{I}(\gamma;\underline{\kappa})\}_{w\in W_0^I}$ 
given by $v_w^{I}(\gamma;\underline{\kappa}):=\pi_{I,\gamma}^{\underline{\kappa}}(T_w)(1\otimes_{H(\underline{\kappa})}1)$.

The following theorem gives the nonsymmetric difference Cherednik-Matsuo correspondence.
\begin{thm}\label{nsCMtheorem}
Let $I\subseteq\{1,\ldots,n\}$ and $\gamma\in T_I^{\underline{\kappa}}$. Then $w_0^I\gamma^{-1}\in T_{I^*}^{\underline{\kappa}^{-1}}$ and
the linear map $\mathcal{M}(T)\rightarrow \mathcal{M}(T)\otimes M_I^{\underline{\kappa}}(\gamma)$,
defined by
\[
\phi\mapsto \sum_{w\in W_0^I}\varpi_{\underline{\kappa}^{-1};\upsilon_0^{-1},\upsilon_n^{-1}}(T_{w(w_0^I)^{-1}})\phi\otimes v_w^I(\gamma;\underline{\kappa})
\]
restricts to a linear isomorphism 
\[\alpha_{\upsilon_0,\upsilon_n}^{\underline{\kappa}}: \textup{Sp}_K^{I^*}(w_0^I\gamma^{-1};\underline{\kappa};\upsilon_0,\upsilon_n)\overset{\sim}{\longrightarrow}
\textup{Sol}_{KZ}(M_I^{\underline{\kappa}}(\gamma);\underline{\kappa};\upsilon_0,\upsilon_n)^{W_0}.
\]
\end{thm}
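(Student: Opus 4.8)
The plan is to follow the strategy of \cite{St1}, adapting each step to the $C^\vee C$ (Koornwinder) setup, and to reduce everything to the already-established representation-theoretic facts in the excerpt — principally Theorem \ref{thmN}, Corollary \ref{corCrit}, Lemma \ref{reform}, and Proposition \ref{rhopiequivalence}. First I would deal with the claim $w_0^I\gamma^{-1}\in T_{I^*}^{\underline{\kappa}^{-1}}$: this is a purely combinatorial verification using the definitions of $T_I^{\underline{\kappa}}$ (Definition \ref{basicdefprincipal}(ii)) and of $I^*$, together with the fact that $w_0$ acts on $\mathbb{R}^n$ (hence on $T$ by the $q$-action with $q^0$, i.e. coordinatewise inversion up to permutation) as $-1$; one checks that the conditions $\gamma_i/\gamma_{i+1}=\kappa^2$ for $i\in\{1,\dots,n-1\}\cap I$ and $\gamma_n=\kappa_0\kappa_n$ for $n\in I$ transform precisely into the corresponding conditions with $\underline{\kappa}$ replaced by $\underline{\kappa}^{-1}$ and $I$ replaced by $I^*$ after applying $w_0^I$ and inverting. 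For the relevant case $I=J=\{1,\dots,n-1\}$ this is immediate since $J^*=J$ and $w_{0,J}$ just reverses coordinates.

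Next I would set up the map. The assignment $\phi\mapsto\sum_{w\in W_0^I}\varpi(T_{w(w_0^I)^{-1}})\phi\otimes v_w^I(\gamma;\underline{\kappa})$ is well defined because $\{v_w^I(\gamma;\underline{\kappa})\}_{w\in W_0^I}$ is a basis of $M_I^{\underline{\kappa}}(\gamma)$ and because $w(w_0^I)^{-1}\in W_0$ has a well-defined $T_{w(w_0^I)^{-1}}\in H(\underline{\kappa}^{-1})$. The core assertions are then: (a) if $\phi\in\textup{Sp}_K^{I^*}(w_0^I\gamma^{-1})$ then the image lies in $\textup{Sol}_{KZ}(M_I^{\underline{\kappa}}(\gamma))^{W_0}$; (b) the image is $W_0$-invariant under the $W_0$-action on $\textup{Sol}_{KZ}$ described in Subsection \ref{ReflqKZeqns}; and (c) the map is a bijection onto $\textup{Sol}_{KZ}^{W_0}$. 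For (a) and (b) the key identity is the intertwining relation between the basic representation $\varpi$ on $\mathcal{M}(T)$ and the principal series $\pi_{I,\gamma}^{\underline{\kappa}}$ coming from Corollary \ref{corCrit}: applying $\nabla^V(s_j)$ or $\nabla^V(\tau_i)$ and using $\nabla^V(w)=C_w^V(\cdot)(w\otimes\mathrm{Id}_V)$ from Lemma \ref{reform}, the reflection quantum KZ equations $\nabla^V(\tau(\lambda))f=f$ become, under the above assignment, equivalent to the statement that the ``spectral'' part $\phi$ is a joint eigenfunction of the $\varpi(Y^\lambda)$ with eigenvalue governed by $w_0^I\gamma^{-1}$, together with the $H_{I^*}$-equivariance $\varpi(T_i)\phi=\kappa_i^{-1}\phi$ for $i\in I^*$ — which is exactly membership in $\textup{Sp}_K^{I^*}(w_0^I\gamma^{-1})$. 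This is the heart of the Cherednik–Matsuo mechanism: one transports the difference equations on the $\End$-valued side to scalar eigenvalue equations on the $\mathcal{M}(T)$ side via the duality between induction from $H_I$ and the $Y$-eigenspace decomposition.

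I would then prove bijectivity by exhibiting the inverse explicitly, as in \cite{St1}: given a $W_0$-invariant solution $f=\sum_{w\in W_0^I} f_w\otimes v_w^I(\gamma;\underline{\kappa})$, recover $\phi$ as (a suitable normalization of) the ``leading'' component $f_e$, and check using the $W_0$-invariance equations $C_{s_j}^V(\bm t)f(s_j\bm t)=f(\bm t)$ that $f_e$ lands in $\textup{Sp}_K^{I^*}(w_0^I\gamma^{-1})$ and that $\alpha_{\upsilon_0,\upsilon_n}^{\underline{\kappa}}(f_e)=f$. The dimension count $\textup{Dim}(M_I^{\underline{\kappa}}(\gamma))=\#W_0/\#W_{0,I}=\#W_0^I$ together with triangularity of the transition matrix (with respect to the Bruhat order on $W_0^I$) in the expansion of $\varpi(T_{w(w_0^I)^{-1}})\phi$ shows injectivity, and the explicit inverse shows surjectivity. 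I expect the main obstacle to be bookkeeping rather than conceptual: carefully matching the parameter inversions ($\underline{\kappa}\leftrightarrow\underline{\kappa}^{-1}$, $\upsilon_0\leftrightarrow\upsilon_0^{-1}$, $\upsilon_n\leftrightarrow\upsilon_n^{-1}$) and the index twist $I\mapsto I^*$, $w\mapsto w(w_0^I)^{-1}$ consistently through the cocycle relations \eqref{cocycle} and the explicit transport operators \eqref{explicitC}, and verifying that the reduced-expression choices in $T_w$ do not affect anything (which follows from the braid relations in $H(\underline{\kappa}^{-1})$). Since the excerpt explicitly says the proofs are ``rather straightforward adjustments of the proofs in \cite{St1}'', I would present the argument at the level of these key identities and cite \cite{St1} for the detailed verifications, in particular pointing to the analogue of \cite[Thm.\ 4.3]{St1} (or the relevant numbered result there) for the Cherednik–Matsuo correspondence in the type $A$/classical setting.
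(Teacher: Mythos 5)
Your proposal follows essentially the same route as the paper's own (sketched) proof: both reduce to the Cherednik--Matsuo mechanism of \cite{St1}, first characterizing the $\nabla(W_0)$-invariant $M_I^{\underline{\kappa}}(\gamma)$-valued functions as exactly those of the form $\sum_{w}\varpi(T_{w(w_0^I)^{-1}})\phi\otimes v_w^I$ with $\varpi(T_i)\phi=\kappa_i^{-1}\phi$ for $i\in I^*$ (your triangularity/leading-component argument is precisely how that bijection is established, though the component recovering $\phi$ is the one indexed by $w_0^I$, not $e$), and then showing that the remaining affine condition translates into the $Y$-eigenvalue equations, i.e.\ $\phi\in\textup{Sp}_K(w_0^I\gamma^{-1})$. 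This matches the paper's argument in structure and content, so the proposal is correct.
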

\begin{proof}
The proof is similar to the proof of \cite[Prop. 4.7]{St1}. We will sketch here the main steps.
Write $\nabla=\nabla^{M_I^{\underline{\kappa}}(\gamma)}$.
By a direct computation, analogous to the proof of \cite[Cor. 4.4]{St1},
one shows that a meromorphic $M_I^{\underline{\kappa}}(\gamma)$-valued function 
\[
\psi=\sum_{w\in W_0^I}\phi_w\otimes v_w^I(\gamma;\underline{\kappa})
\]
on $T$ is $\nabla(W_0)$-invariant if and only if 
\[
\phi_w=\varpi_{\underline{\kappa}^{-1};\upsilon_0^{-1},\upsilon_n^{-1}}(T_{w(w_0^I)^{-1}})\phi\qquad 
\forall\,w\in W_0^I
\]
with $\phi\in\mathcal{M}(T)$ satisfying the invariance property
\begin{equation}\label{phivar}
\varpi_{\underline{\kappa}^{-1};\upsilon_0^{-1},\upsilon_n^{-1}}(T_i)\phi=\kappa_i^{-1}\phi\qquad
\forall\, i\in I^*.
\end{equation}
So we need to investigate what the necessary and sufficient additional conditions on 
$\phi\in\mathcal{M}(T)$ satisfying \eqref{phivar} are to ensure that the associated 
$\nabla(W_0)$-invariant $M_I^{\underline{\kappa}}(\gamma)$-valued function
\[
\psi=\sum_{w\in W_0^I}
\varpi_{\underline{\kappa}^{-1};\upsilon_0^{-1},\upsilon_n^{-1}}(T_{w(w_0^I)^{-1}})\phi\otimes
v_w^I(\gamma;\underline{\kappa})
\]
on $T$ becomes $\nabla(W)$-invariant. The additional equation that $\psi$ should satisfy 
is $\nabla(s_0)\psi=\psi$. An algebraic computation analogous to the proof
of \cite[Prop. 4.7]{St1} shows that this is equivalent to the additional requirement
on $\phi$ that $\phi\in\textup{Sp}_K(w_0^I\gamma^{-1};\underline{\kappa};\upsilon_0,\upsilon_n)$.
This completes the proof of the theorem.
\end{proof}

\begin{rema}\label{differenceremark}
{\bf (i)}
The difference Cherednik-Matsuo correspondence is a correspondence between the common eigenspace of the (higher order) Koornwinder $q$-dif\-fe\-ren\-ce operators \cite{Ko,N} and the full space $\textup{Sol}_{KZ}(M_\emptyset^{\underline{\kappa}}(\gamma);\underline{\kappa};\upsilon_0,\upsilon_n)$ of solutions of the reflection quantum KZ equations. 
It can be obtained from a spinor version of the nonsymmetric difference Cherednik-Matsuo correspondence by a symmetrization procedure, cf. \cite[\S 5]{St1}. 
A distinguished $W_0$-invariant solution of the spectral problem of the Koornwinder $q$-difference operators is the symmetric basic hypergeometric function $\mathcal{E}_\gamma^+$ of Koornwinder type, obtainable from $\mathcal{E}_\gamma$ (see Example \ref{nsbhf}) by a symmetrization procedure \cite[\S 2.6]{St2}. 
For $n=1$ the symmetric basic hypergeometric function $\mathcal{E}_\gamma^+$ is the Askey-Wilson function \cite{IR,KS, Stlink}, which is a nonpolynomial eigenfunction of the Askey-Wilson \cite{AW} second-order $q$-difference operator, alternatively expressible as a  very-well-poised ${}_8\phi_7$ series.\\
{\bf (ii)} For $\gamma\in T_I^{\underline{\kappa}}$ the canonical map $M_\emptyset^{\underline{\kappa}}(\gamma)\rightarrow M_I^{\underline{\kappa}}(\gamma)$
 induces a linear map 
 \begin{equation}\label{map}
 \textup{Sol}_{KZ}(M_\emptyset^{\underline{\kappa}}(\gamma);\underline{\kappa};\upsilon_0,\upsilon_n)\rightarrow
\textup{Sol}_{KZ}(M_I^{\underline{\kappa}}(\gamma);\underline{\kappa},\upsilon_0,\upsilon_n),
\end{equation}
cf, \cite[(4.15)]{St1}. 
Combined with {\bf (i)}, it leads to the construction of solutions of the reflection quantum KZ equations from common eigenfunctions of the (higher order) Koornwinder $q$-difference operators. 
It is an interesting open problem to understand the behaviour of the asymptotic basis of solutions of the reflection quantum KZ equations and their connection coefficients (see \cite{St3}) under the map \eqref{map}. 
It is expected to lead to elliptic solutions of dynamical Yang-Baxter equations and reflection equations, cf. \cite[\S 1.4]{St3}.
\end{rema}

\subsection{Nonsymmetric Koornwinder polynomials}\label{Kosection}
Let $\lambda\in\mathbb{Z}^n$ and define $\gamma_\lambda=(\gamma_{\lambda,1},\ldots,\gamma_{\lambda,n})\in T$ by
\[
\gamma_{\lambda,i}=q^{\lambda_i}(\kappa_0\kappa_n)^{-\eta(\lambda_i)}\bigl(\prod_{j<i}\kappa^{\eta(\lambda_j-\lambda_i)}\bigr)
\bigl(\prod_{j>i}\kappa^{-\eta(\lambda_i-\lambda_j)}\bigr)\bigl(\prod_{j\not=i}\kappa^{-\eta(\lambda_i+\lambda_j)}\bigr)
\]
where $\eta(x)=1$ if $x>0$ and $\eta(x)=-1$ if $x\leq 0$. Note that if $\lambda_1\leq\lambda_2\leq\cdots\leq\lambda_n\leq 0$ then
\[
\gamma_\lambda=\bigl(\kappa_0\kappa_n\kappa^{2(n-1)}q^{\lambda_1},\ldots,\kappa_0\kappa_n\kappa^2q^{\lambda_{n-1}},\kappa_0\kappa_nq^{\lambda_n}).
\]
Another special case is $\bm{m}=(m,\ldots,m)\in\mathbb{Z}^n$ with $m\in\mathbb{Z}$,
\begin{equation}\label{gammaspecial}
\gamma_{\bm{m}}=
\begin{cases}
(\kappa_0^{-1}\kappa_n^{-1}q^m,\kappa_0^{-1}\kappa_n^{-1}\kappa^{-2}q^m,\ldots,\kappa_0^{-1}\kappa_n^{-1}\kappa^{2(1-n)}q^m)\quad &\hbox{if }\, m\in\mathbb{Z}_{>0},\\
(\kappa_0\kappa_n\kappa^{2(n-1)}q^m,\kappa_0\kappa_n\kappa^{2(n-3)}q^m,\ldots,\kappa_0\kappa_nq^m)\quad &\hbox{if }\, m\in\mathbb{Z}_{\leq 0}.
\end{cases}
\end{equation}

In the remainder of the paper we assume that the parameters $\kappa_0,\kappa,\kappa_n,q$ are sufficiently generic,
meaning that $\gamma_\lambda\not=\gamma_\mu$ if $\lambda\not=\mu$. Then
\begin{equation}\label{Spintersection}
\textup{Sp}_K^{\emptyset}(\gamma_\lambda^{-1};\underline{\kappa};\upsilon_0,\upsilon_n)
\cap\mathbb{C}[T]=\textup{span}_{\mathbb{C}}\{P_\lambda\}
\end{equation}
is one-dimensional for all $\lambda\in\mathbb{Z}^n$. We can choose $P_\lambda=P_\lambda(\cdot;\underline{\kappa};\upsilon_0,\upsilon_n)$ such that the coefficient of
$\bm{t}^\lambda=t_1^{\lambda_1}t_2^{\lambda_2}\cdots t_n^{\lambda_n}$ in the expansion of $P_\lambda(\bm{t})$ in monomials
$\bm{t}^\mu$ ($\mu\in\mathbb{Z}^n$) is one. 
\begin{defi}
$P_\lambda$ is called the monic nonsymmetric Koornwinder polynomial of degree $\lambda\in\mathbb{Z}^n$.
\end{defi}

Since $\mathbb{C}[T]=\bigoplus_{\lambda\in\mathbb{Z}^n}\mathbb{C}P_\lambda$ the discrete set $\{\gamma_\lambda^{-1}\}_{\lambda\in\mathbb{Z}^n}\subset T$ is
called the polynomial spectrum of the $q$-difference reflection operators $\varpi_{\underline{\kappa}^{-1};\upsilon_0^{-1},\upsilon_n^{-1}}(Y_i)$ ($1\leq i\leq n$). 
\begin{rema}\label{polred}
By \cite[Thm. 6.9]{St0}, the proof of \cite[Thm. 2.13(ii)]{St2} and \cite[(2.5)]{St2}, the nonsymmetric Koornwinder polynomial of degree $\lambda$ equals $\mathcal{E}_{\gamma_\lambda^{-1}}$ (up to a multiplicative constant), with $\mathcal{E}_\gamma$ the nonsymmetric basic hypergeometric function as discussed in Example \ref{nsbhf}. 
\end{rema}

The following lemma is convenient for later purposes.
\begin{lem}\label{invariancelem}
Let $i\in\{1,\ldots,n\}$. The following two conditions are equivalent.
\begin{enumerate}
\item[{\bf (i)}] $s_i\lambda=\lambda$.
\item[{\bf (ii)}] $\varpi(T_i)P_\lambda=\kappa_i^{-1}P_\lambda$.
\end{enumerate}
If $I\subseteq\{1,\ldots,n\}$ and $s_i\lambda=\lambda$ for all $i\in I$, then
$P_\lambda\in \textup{Sp}_K^I(\gamma_\lambda^{-1};\underline{\kappa};\upsilon_0,\upsilon_n)$. 
\end{lem}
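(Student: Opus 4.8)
The plan is to prove the equivalence (i)$\Leftrightarrow$(ii) directly from the defining property of $P_\lambda$ as the unique (up to scalar, normalized) polynomial in $\textup{Sp}_K^{\emptyset}(\gamma_\lambda^{-1};\underline{\kappa};\upsilon_0,\upsilon_n)$, and then deduce the last assertion as an immediate corollary. First I would recall that $\varpi(T_i)$ satisfies the quadratic relation $(\varpi(T_i)-\kappa_i^{-1})(\varpi(T_i)+\kappa_i)=0$ (since we are using inverted parameters $\underline{\kappa}^{-1}$), so on any $\varpi(Y)$-eigenspace the operator $\varpi(T_i)$ is diagonalizable with eigenvalues in $\{\kappa_i^{-1},-\kappa_i\}$; in particular condition (ii) is equivalent to saying that $P_\lambda$ lies in the $\kappa_i^{-1}$-eigenspace of $\varpi(T_i)$, which (using that $T_i$ and the $Y^\mu$ generate $H_{\{i\}}(\underline{\kappa})$) is exactly the statement $P_\lambda\in\textup{Sp}_K^{\{i\}}(\gamma_\lambda^{-1})$.

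For the direction (i)$\Rightarrow$(ii): assume $s_i\lambda=\lambda$. I would use the well-known creation/recursion relation for nonsymmetric Koornwinder polynomials relating $P_{s_i\lambda}$ and $P_\lambda$ under the intertwiner built from $\varpi(T_i)$; more directly, I would argue that $\varpi(T_i)P_\lambda$ is again a polynomial lying in $\textup{Sp}_K(\gamma_\lambda^{-1})$ (because $\varpi(T_i)$ preserves $\mathbb{C}[T]$ and the $Y$-eigenspaces are $\varpi(T_i)$-stable exactly when the associated spectral points are $s_i$-fixed, which holds here since $s_i\lambda=\lambda$ forces $s_i\gamma_\lambda^{-1}=\gamma_\lambda^{-1}$ by the explicit formula for $\gamma_\lambda$). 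By the one-dimensionality \eqref{Spintersection} we get $\varpi(T_i)P_\lambda=cP_\lambda$ for a scalar $c$, and the quadratic relation forces $c\in\{\kappa_i^{-1},-\kappa_i\}$. To pin down $c=\kappa_i^{-1}$ I would examine the top-degree behaviour: the leading term $\bm t^\lambda$ in $P_\lambda$ is $s_i$-fixed, and a short computation with the explicit formula for $c_i(\bm t;\underline{\kappa}^{-1};\upsilon_0^{-1},\upsilon_n^{-1})$ applied to the monomial $\bm t^\lambda$ (using $s_i\bm t^\lambda=\bm t^\lambda$) shows $\varpi(T_i)\bm t^\lambda=\kappa_i^{-1}\bm t^\lambda+(\text{lower terms})$, so the normalization coefficient of $\bm t^\lambda$ in $\varpi(T_i)P_\lambda$ is $\kappa_i^{-1}$, forcing $c=\kappa_i^{-1}$.

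For (ii)$\Rightarrow$(i): suppose $\varpi(T_i)P_\lambda=\kappa_i^{-1}P_\lambda$ but $s_i\lambda\ne\lambda$. Then $\gamma_{\lambda}^{-1}$ and $\gamma_{s_i\lambda}^{-1}$ are distinct (by the genericity assumption $\gamma_\mu\ne\gamma_\nu$ for $\mu\ne\nu$), and the intertwining property of $\varpi(T_i)$ with the $\varpi(Y_j)$ shows that $\varpi(T_i)$ maps the $\gamma_\lambda^{-1}$-eigenspace into the span of the $\gamma_\lambda^{-1}$- and $s_i$-transformed eigenspaces; more precisely the standard argument shows $\varpi(T_i)P_\lambda$ has a nonzero component along $P_{s_i\lambda}$ whenever $s_i\lambda\ne\lambda$ (this is essentially the content of the triangularity of $T_i$-action on the basis $\{P_\mu\}$, cf. the creation-operator description). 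This contradicts $\varpi(T_i)P_\lambda\in\mathbb{C}P_\lambda$. Hence $s_i\lambda=\lambda$. Finally, the last sentence of the lemma is immediate: if $s_i\lambda=\lambda$ for all $i\in I$, then by (i)$\Rightarrow$(ii) we have $\varpi(T_i)P_\lambda=\kappa_i^{-1}P_\lambda$ for all $i\in I$, and since $P_\lambda\in\textup{Sp}_K(\gamma_\lambda^{-1})$ already, the second description of $\textup{Sp}_K^I$ gives $P_\lambda\in\textup{Sp}_K^I(\gamma_\lambda^{-1};\underline{\kappa};\upsilon_0,\upsilon_n)$.

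The main obstacle I anticipate is making the direction (ii)$\Rightarrow$(i) fully rigorous without invoking the detailed creation-operator / triangularity machinery of \cite{Sa,St2}; the cleanest route is probably to quote the known intertwining relation $\varpi(T_i)\varpi(Y^\mu)=\varpi(Y^{s_i\mu})\varpi(T_i)+(\text{correction in }\varpi(H))$ and the fact that the $Y$-spectrum on $\mathbb{C}[T]$ is $\{\gamma_\mu^{-1}\}$ with one-dimensional eigenspaces, from which one reads off that $\varpi(T_i)$ acts on $\textup{span}\{P_\lambda,P_{s_i\lambda}\}$ as a $2\times2$ matrix whose off-diagonal entry from $P_\lambda$ to $P_{s_i\lambda}$ is nonzero precisely when $\gamma_\lambda^{-1}\ne\gamma_{s_i\lambda}^{-1}$. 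I would check this $2\times2$ block computation explicitly (it is the $n=1$ rank-one Askey–Wilson computation applied in the $i$-th pair of variables) rather than appealing to a black box.
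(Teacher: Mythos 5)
Your overall strategy (prove (i)$\Leftrightarrow$(ii) from scratch via the intertwining of $\varpi(T_i)$ with the $Y$-operators and the one-dimensionality of the polynomial eigenspaces, then read off the last assertion) is reasonable and quite different from the paper, which simply quotes \cite[Prop. 4.15]{StBook} for the equivalence and \cite[Prop. 3.5]{StBook} for the fact that $\gamma_\lambda^{-1}\in T_I^{\underline{\kappa}^{-1}}$. However, your argument for (i)$\Rightarrow$(ii) rests on a false claim: $s_i\lambda=\lambda$ does \emph{not} force $s_i\gamma_\lambda^{-1}=\gamma_\lambda^{-1}$. For example, for $\lambda=(0,\ldots,0)$ one has $\gamma_\lambda=(\kappa_0\kappa_n\kappa^{2(n-1)},\ldots,\kappa_0\kappa_n\kappa^2,\kappa_0\kappa_n)$, so $\gamma_{\lambda,i}/\gamma_{\lambda,i+1}=\kappa^2\neq 1$ and $\gamma_{\lambda,n}\neq\gamma_{\lambda,n}^{-1}$ for generic parameters; the spectral point is never $s_i$-fixed. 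The mechanism that actually makes the $\gamma_\lambda^{-1}$-eigenline $\varpi(T_i)$-stable is the opposite one: the intertwining relation places $\varpi(T_i)P_\lambda$ in the span of the eigenspaces for $\gamma_\lambda^{-1}$ and $s_i\gamma_\lambda^{-1}$, and when $s_i\lambda=\lambda$ the point $s_i\gamma_\lambda^{-1}$ is generically \emph{absent} from the polynomial spectrum $\{\gamma_\mu^{-1}\}_{\mu\in\mathbb{Z}^n}$ (it equals $\gamma_{s_i\lambda}^{-1}=\gamma_\lambda^{-1}$ only when $s_i\lambda\neq\lambda$), so the second component must vanish inside $\mathbb{C}[T]$. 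With that correction the rest of your (i)$\Rightarrow$(ii) argument (quadratic relation plus the leading-term computation $\varpi(T_i)\bm t^\lambda=\kappa_i^{-1}\bm t^\lambda+\cdots$ when $s_i\lambda=\lambda$) goes through, and your (ii)$\Rightarrow$(i) argument via the nonvanishing off-diagonal entry of the $2\times 2$ block is the standard one, though you correctly flag that the nonvanishing needs to be checked (in one direction the coefficient is $1$ by the raising-operator recursion; in the other it is a $c$-function product, nonzero for generic parameters).

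A second, smaller omission: in the final deduction you write ``the second description of $\textup{Sp}_K^I$ gives $P_\lambda\in\textup{Sp}_K^I(\gamma_\lambda^{-1};\ldots)$,'' but $\textup{Sp}_K^I(\gamma;\ldots)$ is only defined for $\gamma\in T_I^{\underline{\kappa}^{-1}}$, so you must first verify that $s_i\lambda=\lambda$ for all $i\in I$ implies $\gamma_\lambda^{-1}\in T_I^{\underline{\kappa}^{-1}}$ (i.e. $\gamma_{\lambda,i}/\gamma_{\lambda,i+1}=\kappa^{2}$ for $i\in I\cap\{1,\ldots,n-1\}$ and $\gamma_{\lambda,n}=\kappa_0\kappa_n$ if $n\in I$). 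This is exactly the step the paper handles by citing \cite[Prop. 3.5]{StBook}, and it follows from the explicit formula for $\gamma_\lambda$, but it should be stated rather than passed over.
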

\begin{proof}
The first part of the lemma is \cite[Prop. 4.15]{StBook}. 
For the second part, suppose that $I$ is a subset of $\{1,\ldots,n\}$ and suppose that $\lambda\in\mathbb{Z}^n$ satisfies $s_i\lambda=\lambda$ for all $i\in I$.
Then \cite[Prop. 3.5]{StBook} shows that $\gamma_\lambda^{-1}\in T_I^{\underline{\kappa}^{-1}}$. The statement then follows from the first part of the lemma.
\end{proof}

For a more detailed discussion on nonsymmetric Macdonald-Koornwinder polynomials see, e.g., \cite{StBook}.

\begin{rema}
A suitable symmetrized version of the monic nonsymmetric Koornwinder polynomials give the monic Koornwinder polynomials \cite{Ko}. 
They are $W_0$-invariant Laurent polynomials in the variables $t_1,\ldots,t_n$, and common eigenfunctions of the (higher order) Koornwinder $q$-difference operators \cite{Ko,N}. 
For $n=1$ the Koornwinder $q$-difference operator is the Askey-Wilson second-order $q$-difference operator \cite{AW} and the Koornwinder polynomials are the celebrated monic Askey-Wilson \cite{AW} polynomials, see, e.g., \cite[\S 3.8]{StBook} for a detailed discussion.
\end{rema}

\subsection{Laurent polynomial solutions of the reflection quantum KZ equations}

Let $I\subseteq\{1,\ldots,n\}$ and $\gamma\in T_I^{\underline{\kappa}}$. Then $w_0^I\gamma^{-1}\in T_{I^*}^{\underline{\kappa}^{-1}}$, see Theorem \ref{nsCMtheorem}(i).
Hence for generic $\upsilon_0,\upsilon_n\in\mathbb{C}^*$ and $|q|<1$,
\[
\textup{Sp}_K^{I^*}(w_0^{I}\gamma^{-1}; \underline{\kappa};\upsilon_0,\upsilon_n)\not=\{0\}
\]
in view of Example \ref{nsbhf} (the same is true for $|q|>1$, using the nonsymmetric basic hypergeometric function for $|q|>1$ as constructed in \cite{St0}).
By Theorem \ref{nsCMtheorem} we conclude that nontrivial $W_0$-invariant solutions of the reflection quantum KZ equations associated to $(M_I^{\underline{\kappa}}(\gamma),\underline{\kappa},\upsilon_0,\upsilon_n)$ generically exist. 
In this subsection we focus on the $W_0$-invariant {\it Laurent polynomial} solutions of the reflection quantum KZ equations. 

\begin{defi}
Let $V$ be a $H(\underline{\kappa})$-module and $\upsilon_0,\upsilon_n\in\mathbb{C}^*$. 
We say that $(V,\upsilon_0,\upsilon_n)$ admits $W_0$-invariant Laurent polynomial solutions of the reflection quantum KZ equations if 
\[
\textup{Sol}_{KZ}(V;\underline{\kappa};\upsilon_0,\upsilon_n)^{W_0}\cap\bigl(\mathbb{C}[T]\otimes V\bigr)\not=\{0\}.
\]
A nonzero $f$ from this space is called a nontrivial $W_0$-invariant Laurent polynomial solution of the reflection quantum KZ equations associated to $(V,\upsilon_0,\upsilon_n)$.
\end{defi}

Recall that we assume that the multiplicity parameters are generic, i.e. that $\gamma_\lambda\not=\gamma_\mu$ if $\lambda\not=\mu$.
\begin{prop}\label{polprop}
Let $I\subseteq\{1,\ldots,n\}$ and $\gamma\in T_I^{\underline{\kappa}}$. Then $(M_I^{\underline{\kappa}}(\gamma),
\upsilon_0,\upsilon_n)$ admits $W_0$-invariant Laurent polynomial solutions of the reflection quantum KZ equations
if and only if $w_0^I\gamma=\gamma_\lambda$
with $\lambda\in\mathbb{Z}^n$ satisfying $s_i\lambda=\lambda$ for all $i\in I^*$. The associated nontrivial $W_0$-invariant Laurent
polynomial solutions of the reflection quantum KZ equations are the nonzero multiples of 
\[
\alpha_{\upsilon_0,\upsilon_n}^{\underline{\kappa}}(P_\lambda)=\sum_{w\in W_0^I}\varpi_{\underline{\kappa}^{-1};\upsilon_0^{-1},\upsilon_n^{-1}}(T_{w(w_0^I)^{-1}})P_\lambda\otimes v_w^I(\gamma;\underline{\kappa}).
\]
\end{prop}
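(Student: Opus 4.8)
The plan is to combine Theorem~\ref{nsCMtheorem} (the nonsymmetric difference Cherednik--Matsuo correspondence) with the characterization \eqref{Spintersection} of the polynomial spectrum, together with Lemma~\ref{invariancelem} linking $W_{0,I^*}$-invariance of a weight $\lambda$ to the invariance $\varpi(T_i)P_\lambda=\kappa_i^{-1}P_\lambda$. First I would observe that by Theorem~\ref{nsCMtheorem} the space $\textup{Sol}_{KZ}(M_I^{\underline{\kappa}}(\gamma);\underline{\kappa};\upsilon_0,\upsilon_n)^{W_0}$ is isomorphic, via $\alpha_{\upsilon_0,\upsilon_n}^{\underline{\kappa}}$, to $\textup{Sp}_K^{I^*}(w_0^I\gamma^{-1};\underline{\kappa};\upsilon_0,\upsilon_n)$, and that this isomorphism sends the subspace of $W_0$-invariant Laurent polynomial solutions onto $\textup{Sp}_K^{I^*}(w_0^I\gamma^{-1};\underline{\kappa};\upsilon_0,\upsilon_n)\cap\mathbb{C}[T]$. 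The key point here is that $\alpha_{\upsilon_0,\upsilon_n}^{\underline{\kappa}}$ takes a $\phi$ to $\sum_{w\in W_0^I}\varpi_{\underline{\kappa}^{-1};\upsilon_0^{-1},\upsilon_n^{-1}}(T_{w(w_0^I)^{-1}})\phi\otimes v_w^I(\gamma;\underline{\kappa})$, and since each $\varpi(T_j)$ preserves $\mathbb{C}[T]$, the solution is a $\mathbb{C}[T]$-valued solution exactly when $\phi\in\mathbb{C}[T]$; the basis vectors $v_w^I(\gamma;\underline{\kappa})$ span $M_I^{\underline{\kappa}}(\gamma)$, so no cancellation of the polynomiality can occur.

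Next I would identify $\textup{Sp}_K^{I^*}(w_0^I\gamma^{-1};\underline{\kappa};\upsilon_0,\upsilon_n)\cap\mathbb{C}[T]$ explicitly. We have $\textup{Sp}_K^{I^*}(\beta;\underline{\kappa};\upsilon_0,\upsilon_n)\subseteq\textup{Sp}_K(\beta;\underline{\kappa};\upsilon_0,\upsilon_n)$, and by \eqref{Spintersection} the latter intersected with $\mathbb{C}[T]$ is nonzero (and then one-dimensional, spanned by $P_\lambda$) precisely when $\beta=\gamma_\lambda^{-1}$ for some $\lambda\in\mathbb{Z}^n$; here $\beta=w_0^I\gamma^{-1}$, so the condition is $w_0^I\gamma^{-1}=\gamma_\lambda^{-1}$, i.e. $w_0^I\gamma=\gamma_\lambda$. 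Granting that, $\textup{Sp}_K(w_0^I\gamma^{-1};\underline{\kappa};\upsilon_0,\upsilon_n)\cap\mathbb{C}[T]=\mathbb{C}P_\lambda$, and the extra constraint cutting out $\textup{Sp}_K^{I^*}$ is $\varpi_{\underline{\kappa}^{-1};\upsilon_0^{-1},\upsilon_n^{-1}}(T_i)P_\lambda=\kappa_i^{-1}P_\lambda$ for all $i\in I^*$. By Lemma~\ref{invariancelem} (applied with $\varpi=\varpi_{\underline{\kappa}^{-1};\upsilon_0^{-1},\upsilon_n^{-1}}$, which is the convention fixed in Subsection~\ref{dCMsection}), this holds if and only if $s_i\lambda=\lambda$ for all $i\in I^*$. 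Combining, $(M_I^{\underline{\kappa}}(\gamma),\upsilon_0,\upsilon_n)$ admits a nontrivial $W_0$-invariant Laurent polynomial solution iff $w_0^I\gamma=\gamma_\lambda$ with $\lambda$ satisfying $s_i\lambda=\lambda$ for all $i\in I^*$, and in that case the solution space is $\alpha_{\upsilon_0,\upsilon_n}^{\underline{\kappa}}(\mathbb{C}P_\lambda)$, giving the displayed formula.

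One small compatibility check I would include: the hypothesis $\gamma\in T_I^{\underline{\kappa}}$ forces $\gamma_\lambda=w_0^I\gamma\in w_0^I T_I^{\underline{\kappa}}=T_{I^*}^{\underline{\kappa}}$ (using the definition of $I^*$ via $w_0^Is_i=s_{i_I^*}w_0^I$), and conversely if $s_i\lambda=\lambda$ for $i\in I^*$ then $\gamma_\lambda\in T_{I^*}^{\underline{\kappa}}$ by \cite[Prop.~3.5]{StBook} (as invoked in the proof of Lemma~\ref{invariancelem}), so the two sides of the "if and only if" are consistent and no spurious constraints are introduced. The one genuine subtlety — the step I expect to be the main obstacle — is verifying that $\alpha_{\upsilon_0,\upsilon_n}^{\underline{\kappa}}$ restricts to a \emph{bijection} between the Laurent-polynomial subspaces on the two sides, i.e. that $\alpha_{\upsilon_0,\upsilon_n}^{\underline{\kappa}}(f)\in\mathbb{C}[T]\otimes M_I^{\underline{\kappa}}(\gamma)$ implies $f\in\mathbb{C}[T]$. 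This is immediate from the $\varpi$-side (as noted, each $\varpi(T_j)$ preserves $\mathbb{C}[T]$), but the converse direction requires that the component of $\alpha_{\upsilon_0,\upsilon_n}^{\underline{\kappa}}(f)$ along $v_e^I(\gamma;\underline{\kappa})$ is exactly $f$ — which it is, since $w(w_0^I)^{-1}=e$ forces $w=w_0^I$, but $w_0^I\notin W_0^I$ in general, so more care is needed: one uses instead that the top term $w=w_0^I\cdot(\text{longest in }W_{0,I})$ contributes $\varpi(T_{w_{0,I}})f$ with $w_{0,I}\in W_{0,I^*}$ acting on $f$ by the scalar $\prod_{i\in I^*}\kappa_i^{-1}$ when $f\in\textup{Sp}_K^{I^*}$, so again polynomiality transfers. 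I would spell this out by tracking the coset representative $w\in W_0^I$ with $w(w_0^I)^{-1}$ of minimal length and invoking the invariance \eqref{phivar}.
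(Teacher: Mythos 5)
Your argument is correct and is exactly the paper's proof, which is stated there as a one-line consequence of Theorem \ref{nsCMtheorem}, Lemma \ref{invariancelem}, \eqref{Spintersection} and the decomposition $\mathbb{C}[T]=\bigoplus_{\lambda}\mathbb{C}P_\lambda$; you have simply written out the details. One correction to your final paragraph: $w_0^I=w_0w_{0,I}^{-1}$ is the \emph{longest} element of $W_0^I$ and in particular does lie in $W_0^I$, so the $v_{w_0^I}^I(\gamma;\underline{\kappa})$-component of $\alpha_{\upsilon_0,\upsilon_n}^{\underline{\kappa}}(\phi)$ is $\varpi(T_e)\phi=\phi$ and polynomiality transfers back immediately; the workaround you propose via $w=w_0^I w_{0,I}$ is unnecessary (and that element equals $w_0$, which generally is \emph{not} in $W_0^I$).
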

\begin{proof}
This follows from Theorem \ref{nsCMtheorem}, Lemma \ref{invariancelem},  \eqref{Spintersection} and the fact
that $\mathbb{C}[T]=\bigoplus_{\lambda\in\mathbb{Z}^n}\mathbb{C}P_\lambda$.
\end{proof}

We now come to our main application of Proposition \ref{polprop}, by applying it to the spin representations $\rho_{\psi_0,\psi_n}^{\underline{\kappa}}\simeq \pi_{J,\zeta}^{\underline{\kappa}}$. 
Recall here that $J=\{1,\ldots,n-1\}$ and that
\[
\zeta=(\psi_0\psi_n\kappa^{n-1},\psi_0\psi_n\kappa^{n-3},\ldots,\psi_0\psi_n\kappa^{1-n}),
\]
see Proposition \ref{rhopiequivalence}.

Let $\lambda\in\mathbb{Z}^n$. Then $s_i\lambda=\lambda$ for all $i\in J^*$ if and only if $\lambda=\bm{m}$ for some $m\in\mathbb{Z}$.
On the other hand, note that
\[
w_0^J\zeta=(\psi_0^{-1}\psi_n^{-1}\kappa^{n-1},\psi_0^{-1}\psi_n^{-1}\kappa^{n-3},\ldots,\psi_0^{-1}\psi_n^{-1}\kappa^{1-n}).
\]
\begin{thm} \label{mainthm}
For generic parameters, the spin representation $(M_J^{\underline{\kappa}}(\zeta),\upsilon_0,\upsilon_n)$ admits nontrivial $W_0$-invariant Laurent polynomial solutions if and only if 
\begin{equation}\label{mcondition}
\psi_0\psi_nq^m=\bigl(\kappa_0\kappa_n\kappa^{n-1}\bigr)^{\eta(m)}
\end{equation}
for some $m\in\mathbb{Z}$ where, recall, $\eta(x)=1$ if $x>0$ and $\eta(x)=-1$ if $x\leq 0$.
The associated nontrivial $W_0$-invariant Laurent polynomial solutions of the reflection quantum KZ equations are multiples of
\[
\alpha_{\upsilon_0,\upsilon_n}^{\underline{\kappa}}(P_{\bm{m}})=
\sum_{w\in W_0^J}\varpi_{\underline{\kappa}^{-1};\upsilon_0^{-1},\upsilon_n^{-1}}(T_{w(w_0^J)^{-1}})P_{\bm{m}}\otimes v_w^I(\zeta;\underline{\kappa}).
\]
\end{thm}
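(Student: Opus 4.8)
The plan is to deduce the theorem directly from Proposition \ref{polprop}, applied to the principal series realization $\rho_{\psi_0,\psi_n}^{\underline{\kappa}}\simeq\pi_{J,\zeta}^{\underline{\kappa}}$ of the spin representation given by Proposition \ref{rhopiequivalence}, with $I=J=\{1,\ldots,n-1\}$ and $\gamma=\zeta$. Since $J^*=J$, Proposition \ref{polprop} says that $(M_J^{\underline{\kappa}}(\zeta),\upsilon_0,\upsilon_n)$ admits a nontrivial $W_0$-invariant Laurent polynomial solution of the reflection quantum KZ equations if and only if there exists $\lambda\in\mathbb{Z}^n$ with $s_i\lambda=\lambda$ for all $i\in J$ and $w_0^J\zeta=\gamma_\lambda$, and in that case the solutions are precisely the nonzero scalar multiples of $\alpha_{\upsilon_0,\upsilon_n}^{\underline{\kappa}}(P_\lambda)$. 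As recorded just before the statement of the theorem, the stabilizer condition $s_i\lambda=\lambda$ for all $i\in J$ holds exactly when $\lambda=\bm{m}=(m,\ldots,m)$ for some $m\in\mathbb{Z}$, so the entire problem reduces to determining for which parameters one has $w_0^J\zeta=\gamma_{\bm{m}}$ for some $m\in\mathbb{Z}$.

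I would then perform this comparison directly. Using $w_0^J\zeta=(\psi_0^{-1}\psi_n^{-1}\kappa^{n-1},\psi_0^{-1}\psi_n^{-1}\kappa^{n-3},\ldots,\psi_0^{-1}\psi_n^{-1}\kappa^{1-n})$, computed above, together with the explicit form \eqref{gammaspecial} of $\gamma_{\bm{m}}$, and splitting into the cases $m>0$ and $m\leq 0$: in both cases the $i$-th coordinates of $w_0^J\zeta$ and of $\gamma_{\bm{m}}$ are geometric in $i$ with the same ratio $\kappa^{-2}$, so that the quotient $(w_0^J\zeta)_i/(\gamma_{\bm{m}})_i$ does not depend on $i$ and the vector equation $w_0^J\zeta=\gamma_{\bm{m}}$ collapses to a single scalar identity. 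A one-line rearrangement shows that this identity reads $\psi_0\psi_nq^m=\kappa_0\kappa_n\kappa^{n-1}$ when $m>0$ and $\psi_0\psi_nq^m=\kappa_0^{-1}\kappa_n^{-1}\kappa^{1-n}$ when $m\leq 0$; since $\eta(m)=1$ for $m>0$ and $\eta(m)=-1$ for $m\leq 0$, these two cases combine into the single condition \eqref{mcondition}.

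Finally, feeding $\lambda=\bm{m}$ back into Proposition \ref{polprop} (the condition $s_i\bm{m}=\bm{m}$ for $i\in J^*=J$ being automatic) gives that the nontrivial $W_0$-invariant Laurent polynomial solutions are exactly the nonzero scalar multiples of $\alpha_{\upsilon_0,\upsilon_n}^{\underline{\kappa}}(P_{\bm{m}})=\sum_{w\in W_0^J}\varpi_{\underline{\kappa}^{-1};\upsilon_0^{-1},\upsilon_n^{-1}}(T_{w(w_0^J)^{-1}})P_{\bm{m}}\otimes v_w^J(\zeta;\underline{\kappa})$. I would also note in passing that, under the standing genericity assumption, $m$ is uniquely determined, since $\lambda\mapsto\gamma_\lambda$ is then injective, and that \eqref{mcondition} visibly does not involve the Baxterization parameters $\upsilon_0,\upsilon_n$, as announced in the introduction. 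There is essentially no obstacle in this argument --- all the representation-theoretic content is already packaged in Propositions \ref{rhopiequivalence} and \ref{polprop} --- and the only point that needs a little care is to keep the two sign regimes $m>0$ and $m\leq 0$, hence the exponent $\eta(m)$, straight when inverting the relation between $\psi_0\psi_n$ and $\kappa_0\kappa_n\kappa^{n-1}$.
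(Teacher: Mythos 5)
Your proposal is correct and follows exactly the paper's own argument: the paper likewise deduces the theorem from Proposition \ref{polprop} applied to $I=J$, $\gamma=\zeta$, reducing everything to the observation that $w_0^J\zeta=\gamma_{\bm{m}}$ if and only if \eqref{mcondition} holds. Your explicit case check of $m>0$ versus $m\leq 0$ against \eqref{gammaspecial} just spells out the one line the paper leaves to the reader.
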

\begin{proof}
In view of the previous proposition, it suffices to note that for a given $m\in\mathbb{Z}$ we have $w_0^J\zeta=\gamma_{\bm{m}}$ if and only if \eqref{mcondition} holds true.
\end{proof}

Different examples of $W_0$-invariant Laurent polynomial solutions of reflection quantum KZ equations are given in \cite{Ka,ZJ1,dFZJ,dGPS,PoThesis}.

We finally stress the importance of polynomial solutions of the quantum KZ equations, cf. \cite{dFZJ,dFZJ2,ZJ2,Pa}, in particular in relation to the Razumov-Stroganov conjectures \cite{RS} (recently proved by direct combinatorial methods in \cite{CS}). Similarly, polynomial solutions of the reflection quantum KZ equations play a similar role for the refinements of the Razumov-Stroganov conjectures for open boundaries from \cite{dGR}, cf., e.g., \cite{dGP2, ZJ1,KaPrep}.  We remark though that in the context of the Razumov-Stroganov conjectures the double affine Hecke algebra parameters $q,\underline{\kappa},\upsilon_0,\upsilon_n$ are specialized to particular non-generic values, in contrast to the setup of Theorem \ref{mainthm}, where the parameters $q,\underline{\kappa},\upsilon_0,\upsilon_n$ are assumed to be generic.


\end{document}